\newtheorem{definition}{Definition}[section]
\newtheorem{theorem}{Theorem}[section]
\newtheorem{proposition}{Proposition}[section]
\newtheorem{lemma}{Lemma}[section]
\newtheorem{remark}{Remark}[section]
\newcommand{\R}{\mathbb{R}}
\newcommand{\ity}{\infty}
\begin{document}
\title[Weakly coupled system of damped wave equations with mixed nonlinearities]{Critical curve for the weakly coupled system of damped wave equations with mixed nonlinearities}

\subjclass{35A01, 35B44, 35L51, 35L52, 35L71}
\keywords{Critical curve, Weakly coupled system, Damped wave equation, Mixed nonlinearities}
\thanks{$^* $\textit{Corresponding author:} Dinh Van Duong (vanmath2002@gmail.com)}

\maketitle
\centerline{\scshape Dinh Van Duong$^{1,*}$, Tuan Anh Dao$^{1}$, Masahiro Ikeda$^{2, 3}$}
\medskip
{\footnotesize
	\centerline{$^1$ Faculty of Mathematics and Informatics, Hanoi University of Science and Technology}
	\centerline{No.1 Dai Co Viet road, Hanoi, Vietnam}
    \vspace{0.2cm}
    
   \centerline{$^2$ Center for Advanced Intelligence Project}
   \centerline{RIKEN 1-4-1, Nihonbashi,
Chuo-ku, Tokyo 103-0027, Japan}

\vspace{0.2cm}
\centerline{$^3$ Graduate School of Information Science and Technology, The University of Osaka}
\centerline{1-5 Yamadaoka, 565-0871, Suita, Osaka, Japan}
}
\medskip
    
\begin{abstract}
   In this paper, we would like to consider the Cauchy problem for a weakly coupled system of semi-linear damped wave equations with mixed nonlinear terms. Our main objective is to draw conclusions about the critical curve of this problem using tools from Harmonic Analysis. Precisely, we obtain a new critical curve $pq = 1+ 2/n$ for $n =1,2$ by proving global (in time) existence of small data Sobolev solutions when $pq > 1 +2/n$ and blow-up of weak solutions in finite time even for small data when $pq < 1+ 2/n$ for $n \geq 1$. From this, we infer the impact of the nonlinearities of time derivative-type on the critical curve associated with the system. 
\end{abstract}

% \linenumbers
\tableofcontents

%====================================================================================
%=================================================================================={Introduction}	
\section{Introduction}
Let us consider the following Cauchy problem for weakly coupled system of semi-linear damped wave equations with mixed nonlinear terms:
\begin{equation} \label{Main.Eq.1}
\begin{cases}
u_{tt} -\Delta u + u_t= |v|^p, &\quad x\in \R^n,\, t > 0, \\
v_{tt} -\Delta v +v_t = |u_t|^q, &\quad x \in \mathbb{R}^n, \,t > 0,\\
(u, u_t, v, v_t)(0,x)= \varepsilon (u_0, u_1, v_0, v_1)(x), &\quad x \in \mathbb{R}^n, \\
\end{cases}
\end{equation}
where $p>1$ and $q>1$ stand for power exponents of nonlinearities and the positive constant $\varepsilon$ describes the size of initial data. The quantities $|v|^p$ and $|u_t|^q$ are respectively called power nonlinearity and nonlinearity of derivative-type. 

To get started, we now mention some historical views in terms of studying the following problem:
\begin{equation} \label{Main.Eq.2}
\begin{cases}
\phi_{tt}-\Delta \phi+ \phi_t= \mu |\phi|^p, &\quad x\in \R^n,\, t > 0, \\
(\phi, \phi_t)(0,x)= \varepsilon (\phi_0, \phi_1)(x), &\quad x\in \R^n,
\end{cases}
\end{equation}
where $p >1$ and $\mu \geq 0$. It is well known that the solutions to the damped wave equation behave like those of the heat equation as time goes to infinity. This is called \textit{diffusion phenomena} and investigated for a long time by many mathematicians (see \cite{Matsumura1976, Nishihara2003, MarcatiNishihara2003} and the references therein). Matsumura \cite{Matsumura1976} was the first to establish some basic decay estimates for the problem (\ref{Main.Eq.2}), also well-known as the linear damped wave equation for $\mu = 0$ and the semi-linear damped wave equation for $\mu = 1$. He concluded that the damped wave equation has a diffusive structure as $t \to \infty$. Since then, many papers have studied sharp $L^m-L^q$
  estimates (with $1 \leq m \leq q \leq \infty$) for the linear problem ($\mu = 0$), for example, \cite{Nishihara2003, DabbiccoEbert2017, Ikeda2019} and the references cited therein. For the semi-linear problem (\ref{Main.Eq.2}) with $\mu = 1$, the papers \cite{TodorovaYordanov2001, IkehataMiyaokaNakatake2004, IkehataTanizawa2005} are obtained a global (in time) weak solution to this problem with a power type of nonlinearity $|u|^p$ satisfying
  \begin{align}\label{local}
      \begin{cases}
      \vspace{0.3cm}
          1+ \displaystyle\frac{2}{n} < p < +\infty &\text{ if } n = 1,2,\\
          1 +\displaystyle\frac{2}{n} < p \leq \displaystyle\frac{n}{n-2} &\text{ if } n \geq 3,
      \end{cases}
  \end{align}
  by using weighted energy methods and assuming data in Sobolev spaces with additional regularity $L^1$. On the other hand, nonexistence of general global (in time) small data solutions is proved in \cite{TodorovaYordanov2001} for $1 < p < 1 +2/n$ and in \cite{Zhang2001} for $p = 1+2/n$. Therefore, the exponent $$p_F(n) := 1+\frac{2}{n} $$ 
 is referred to as the critical exponent of the semilinear problem (\ref{Main.Eq.2}) for $\mu = 1$, known as the Fujita exponent. It is also the critical exponent of the semi-linear heat equation $$\phi_t -\Delta \phi = |\phi|^p, \,\,\,\phi(0,x) = \phi_0(x), \quad t > 0,\,  x \in \mathbb{R}^n.$$
 Here, critical exponent is understood as the threshold
between global (in time) existence of small data weak solution and blow-up of solutions
even for small data. The diffusion phenomenon connecting the linear heat equation and the classical damped wave equation (see \cite{Narazaki2004, Nishihara2003}) sheds light on the parabolic nature of classical damped wave models with power-type nonlinearities, as seen through the decay estimates of their solutions. Additionally, to derive the critical regularity of nonlinear terms for the semilinear damped wave equation, the authors in \cite{Ebert2020} considered the equation of (\ref{Main.Eq.2}) with
nonlinearities $|u|^{1+\frac{2}{n}}\mu(|u|)$ on the right-hand side, where $\mu$ stands for a suitable
modulus of continuity.   

 Under additional regularity $L^m$ for the initial data, with $m \in [1,2]$, papers \cite{IkehataOhta2002, IkehataTanizawa2005} identified the critical exponent of problem (\ref{Main.Eq.2}) as $$p_c(m):= p_F\left(\frac{n}{m}\right) := 1 + \frac{2m}{n}.$$ However, the authors did not provide conclusions regarding the solution's properties when $p = p_c(m)$. Paper \cite{Ikeda2019} clarified this, showing that 
$m=1$ falls into the blow-up range, while $m \in (1, 2]$ belongs to the global existence range under certain specific conditions. Continuing the extension of the initial data belonging to Sobolev spaces of negative order $  \dot{H}^{-\gamma} \times \dot{H}^{-\gamma}, \gamma \geq 0,$ Chen-Reissig \cite{ChenReissig2023} determined that the critical exponent for problem (\ref{Main.Eq.1}) is 
        \begin{align*}
            p_c(2,\gamma) := p_{F}\left(\frac{n}{2}+\gamma\right) = 1+ \frac{4}{n+2\gamma}, 
        \end{align*}
        by proving the global existence with small data when $p > p_c(2,\gamma)$ and blow-up solution when $p < p_c(2,\gamma)$. However, the behavior of solutions to problem (\ref{Main.Eq.1}) with $p = p_c(2, \gamma)$
  remains an open problem. To clarify this, paper \cite{DuongDao2025} is to study problem (\ref{Main.Eq.2}) by imposing additional regularity $ \dot{H}^{-\gamma}_m $ on the initial data, with $m \in (1, 2], \gamma \geq 0 $ and the method used in this work is based on \cite{Ikeda2019}. Specifically, they determine the critical exponent for problem (\ref{Main.Eq.1}) to be
        \begin{align*}
            p_c(m, \gamma) := p_F\left(\frac{n}{m}+\gamma\right) = 1 + \frac{2m}{n+m\gamma}.
        \end{align*}
        Additionally, they show that the critical value $p = p_c(m, \gamma)$ belongs to the global existence range.
 
Based on the research results from the damped wave equation, we turn to the papers on  the weakly coupled system of damped wave equations
\begin{align}\label{Main.Eq.3}
    \begin{cases}
        u_{tt} -\Delta u + u_t = |v|^p, &\text{ } x \in \mathbb{R}^n,\, t > 0,\\
        v_{tt} - \Delta v + v_t = |u|^q, &\text{ } x \in \mathbb{R}^n,\, t > 0,\\
        (u,u_t,v,v_t)(0,x) = \varepsilon (u_0, u_1, v_0, v_1)(x), &\text{ } x \in \mathbb{R}^n.
    \end{cases}
\end{align}
It is claimed in \cite{SunWang2007, NishiharaWakasugi2014, NishiharaWakasugi2015, Takeda2009} that the \textit{critical curve} for system (\ref{Main.Eq.3}) is given by 
\begin{align*}
   \Gamma(p,q):= \frac{\max\{p, q\}+1}{pq-1} = \frac{n}{2}.
\end{align*}
Specifically, they proved that, if $\Gamma(p,q) < n/2$, then there exists a unique global (in time) small data Sobolev solution for all spatial dimensions. Sun-Wang \cite{SunWang2007} obtained the global existence result for (\ref{Main.Eq.3}) using the method due to Escobedo– Herrero and $L^p-L^q$ type estimates of the linear damped wave equation. Nishihara-Wakasugi \cite{NishiharaWakasugi2014} improved this result by proving a global existence result for all $n \geq 1$ by using the weighted energy method. This method enables them to treat the higher dimensional cases $n \geq 4$ with rapidly decaying (or compactly supported)
initial data. Furthermore, the weighted energy method is applicable to the case of damping with
time or space dependent coefficient. 
In the opposite case $\Gamma(p,q) \geq n/2$,  the aforementioned works have established that every non-trivial local (in time) weak solution, in generally
blows up in finite time by applying test function method, which has been developed by Mitidieri-Pohozaev \cite{MitidieriPohozaev2001, MitidieriPohozaev2009}. In particular, by constructing two test functions with suitable
scaling, we will derive a system of two nonlinear differential inequalities with their initial values. 

Problem (\ref{Main.Eq.1}) is constructed by modifying problem (\ref{Main.Eq.3}), replacing the nonlinear pair $(|v|^p, |u|^q)$ by $(|v|^p, |u_t|^q)$. To the best of the authors’ knowledge, there has been no previous work addressing the critical curve for this problem. For this reason, the main objective of this paper is to demonstrate that the curve
$$pq = 1 +\frac{2}{n}$$
serves as the critical curve for problem (\ref{Main.Eq.1}) in one and two spatial dimensions. Specifically, we prove that under the condition $pq > 1+2/n$ for $n=1,2$, along with certain additional assumptions, problem (\ref{Main.Eq.1}) admits a unique global Sobolev solution (in time) with small initial data. At this point, the crux of our approach is based on the technique of using
loss of decay associated with recently developed tools from Harmonic Analysis. The
advantage worthy of mentioning of allowing some loss of decay is to show how the
restrictions to the admissible exponents $p$ and $q$ could be relaxed. In addition, by employing the test function method, the authors also show that under the condition $pq < 1+2/n$, problem (\ref{Main.Eq.1}) exists no global weak solution, even for small initial data. Through this, we have shown that the nonlinear term $|u_t|^q$
  is indeed strong enough to shift the critical curve of system (\ref{Main.Eq.1}) closer to the point $(1,1)$; that is, the curve 
$\Gamma(p,q) = n/2$ actually lies above $pq = 1+2/n$. This represents a new finding in our study. For spatial dimensions $n \geq 3$, due to the limitations of the local existence condition (\ref{local}) and the strong regularity required of the initial data in the estimates for solutions to the classical linear damped wave equation, we are not yet able to characterize the sharp global existence region for this problem. However, we will try to investigate this in the near future.

\hspace{0.5cm}

\noindent \textbf{Notations} \medskip
\begin{itemize}[leftmargin=*]
\item We write $f\lesssim g$ when there exists a constant $C>0$ such that $f\le Cg$, and $f \sim g$ when $g\lesssim f\lesssim g$.

\item We denote $\widehat{w}(t,\xi):= \mathfrak{F}_{x\rightarrow \xi}\big(w(t,x)\big)$ as the Fourier transform with respect to the spatial variable of a function $w(t,x)$. Moreover, $\mathfrak{F}^{-1}$ represents the inverse Fourier transform.

\item As usual, $H_r^{a}$ and $\dot{H}_r^{a}$, with $r \in (1, \infty), a > 0$, denote Bessel and Riesz potential spaces based on $L^r$ spaces. Here $\big< \nabla\big>^{a}$ and $|\nabla|^{a}$ stand for the pseudo-differential operators with symbols $\big<\xi\big>^{a}$ and $|\xi|^{a}$, respectively.

\item For any $\gamma \in \R$, we denote by $[\gamma]^+:= \max\{\gamma,0\}$, its positive part. 
 
\item Finally, we introduce the following data spaces:
\begin{align*}
    \mathcal{D}_{q,p}^1(s) &:= \big(H^{s+1}\cap H_q^{|\frac{1}{q}-\frac{1}{2}|+1}\cap L^1\big) \times \left(H^s \cap L^q \cap L^1\right)\\
    &\qquad\qquad \times \left(H^s \cap L^p \cap L^1\right) \times \left(L^2 \cap L^p \cap L^1\right)
\end{align*}
and 
\begin{align*}
    \mathcal{D}_{\alpha, p}^2(s) &:= \big(H^{s+1} \cap H^{\frac{2}{\alpha}}_{\alpha} \cap L^1\big) \times \big(H^s \cap H^{\frac{1}{\alpha}-\frac{1}{2}}_{\alpha} \cap L^1\big)\\
    &\qquad \times \big(H^s \cap H^{|\frac{1}{p}-\frac{1}{2}|}_p \cap L^1\big) \times \left(H^{s-1} \cap L^p \cap L^1\right),
\end{align*}
with the assumptions of Theorems \ref{Theorem1} and \ref{Theorem3}, respectively, being satisfied.
\end{itemize}
\textbf{Main results.} Let us now state the main results which will be proved in this paper.
    \begin{theorem}[\textbf{Global existence in $1$D}]\label{Theorem1}
        Let $n=1$. Assume that the exponents $p, q$ satisfy $\min\{p,q\} > 1$ and the following condition:
        \begin{align}
            pq > 3 .\label{condition1.1}
        \end{align}
        Moreover, we also assume
        \begin{align}\label{condition1.1.2}
        \frac{1}{2} < s < \min\left\{1, \,\frac{3}{2}-\frac{1}{p},\, \frac{3}{2}-\frac{1}{q}\right\} 
        \end{align}
        and the initial data $(u_0, u_1, v_0, v_1) \in \mathcal{D}_{q,p}^1(s)$. Then, there exists a constant $\varepsilon_0 > 0$ such that for any $\varepsilon \in (0, \varepsilon_0]$, we have a unique global (in time) solution 
        \begin{align*}
            (u,v) \in \left(\mathcal{C}([0, \infty), L^2) \cap\mathcal{C}^1([0, \infty), H^s \cap L^q)\right) \times \mathcal{C}([0, \infty), H^s \cap L^p)
        \end{align*}
        to \eqref{Main.Eq.1} fulfilling the following estimates:
        \begin{align*}
            \|u_t(t,\cdot)\|_{L^2} &\lesssim \varepsilon (1+t)^{-\frac{5}{4}+[\gamma_1(p)]^+} \|(u_0, u_1, v_0, v_1)\|_{\mathcal{D}_{q,p}^1(s)},\\
            \|u_t(t,\cdot)\|_{L^q} &\lesssim \varepsilon (1+t)^{-\frac{1}{2}(1-\frac{1}{q})-1+ [\gamma_1(p)]^+} \|(u_0, u_1, v_0, v_1)\|_{\mathcal{D}_{q,p}^1(s)},\\
            \|u_t(t,\cdot)\|_{\dot{H}^s} &\lesssim \varepsilon (1+t)^{-\frac{5}{4}-\frac{s}{2} +[\gamma_1(p)]^+} \|(u_0, u_1, v_0, v_1)\|_{\mathcal{D}_{q,p}^1(s)},\\
            \|v(t,\cdot)\|_{L^p} &\lesssim \varepsilon (1+t)^{-\frac{1}{2}(1-\frac{1}{p})} \|(u_0, u_1, v_0, v_1)\|_{\mathcal{D}_{q,p}^1(s)},\\
            \|v(t,\cdot)\|_{L^2} &\lesssim \varepsilon (1+t)^{-\frac{1}{4}} \|(u_0, u_1, v_0, v_1)\|_{\mathcal{D}_{q,p}^1(s)},\\
            \| v(t,\cdot)\|_{\dot{H}^s} &\lesssim \varepsilon(1+t)^{-\frac{1}{4}-\frac{s}{2}}  \|(u_0, u_1, v_0, v_1)\|_{\mathcal{D}_{q,p}^1(s)},
        \end{align*}
        where $\gamma_1(p) := 1 -\frac{1}{2}(p-1) +\varepsilon_1$ with an arbitrarily small positive constant $\varepsilon_1$.
    \end{theorem}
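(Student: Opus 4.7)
The plan is a standard contraction-mapping argument carried out in a weighted Banach space tailored to the six decay rates in the statement. Using Duhamel's principle, I would write
\begin{align*}
u(t) &= \varepsilon K_0(t,\cdot)\ast_x u_0 + \varepsilon K_1(t,\cdot)\ast_x u_1 + \int_0^t K_1(t-s,\cdot)\ast_x |v(s)|^p\,\rmd s, \\
v(t) &= \varepsilon K_0(t,\cdot)\ast_x v_0 + \varepsilon K_1(t,\cdot)\ast_x v_1 + \int_0^t K_1(t-s,\cdot)\ast_x |u_t(s)|^q\,\rmd s,
\end{align*}
where $K_0,K_1$ are the fundamental solutions of the linear damped wave equation in $1$D. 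I would introduce the solution space $X(T)$ as pairs $(u,v)$ in the claimed regularity class, endowed with the norm obtained by taking the supremum over $t\in[0,T]$ of each of the six quantities in the theorem divided by its target time-decay factor. The goal is to show that the Duhamel operator $N(u,v)$ defined by the right-hand sides above maps a small ball of $X(T)$ into itself and is a contraction there, uniformly in $T$; passing to $T\to\infty$ yields the desired global solution.

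\textbf{Linear and nonlinear estimates.} For the linear part, I would invoke the $L^1\cap L^{m}\to L^{q}$ low-frequency and $H_m^{a}\to H_m^{a}$ high-frequency estimates for the classical damped wave operator (in the Matsumura--Nishihara--Ikehata tradition recalled in the Introduction); applying them to data in $\mathcal{D}_{q,p}^1(s)$ reproduces the six polynomial rates on the homogeneous part. For the Duhamel terms the key identity is $\||v(s)|^p\|_{L^1}=\|v(s)\|_{L^p}^p$ and similarly $\||u_t(s)|^q\|_{L^1}=\|u_t(s)\|_{L^q}^q$; the intermediate $L^{m_1}$ and $\dot{H}^{s}$ norms of $|v|^p$ and $|u_t|^q$ are controlled by Gagliardo--Nirenberg interpolation among $\|v\|_{L^p},\|v\|_{L^2},\|v\|_{\dot{H}^s}$ and the analogous trio for $u_t$, where $s>\tfrac12$ guarantees $H^s\hookrightarrow L^\infty$ in one dimension, a fact needed to absorb the non-integer powers. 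A time-splitting of the Duhamel integral at $s=t/2$ pairs each regime with the sharpest linear rate. The exponent arising from $\|v(s)\|_{L^p}^p\lesssim (1+s)^{-(p-1)/2}$ is integrable only when $p>3$; for $1<p<3$ one is forced to accept the extra growth $(1+t)^{1-(p-1)/2+\varepsilon_1}=(1+t)^{\gamma_1(p)}$, which is exactly how the loss $[\gamma_1(p)]^+$ enters the $u$- and $u_t$-rates. On the other hand, $u_t$ decays one full power faster than $u$, so $\|u_t\|_{L^q}^q$ gives time integrands of the form $(1+s)^{-\frac{q-1}{2}-q+q[\gamma_1(p)]^+}$; a direct calculation shows that the integrability condition reduces precisely to $pq>3$, which is assumption \eqref{condition1.1} and explains why no loss of decay is imposed on $v$.

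\textbf{Main obstacle and closing.} The most delicate step, in my view, is handling the derivative-type nonlinearity $|u_t|^q$ at fractional-regularity level on the $v$-side: to bound $\||u_t|^q\|_{\dot{H}^s}$ and $\||u_t|^q\|_{H_p^{[\sigma(p)-1]^+}}$ one needs a Kato--Ponce-type fractional chain rule for $|\cdot|^q$, whose validity forces the compatibility conditions $s<\tfrac{3}{2}-\tfrac{1}{q}$ and, symmetrically for $|v|^p$, $s<\tfrac{3}{2}-\tfrac{1}{p}$ in \eqref{condition1.1.2}. Matching the time-decay bookkeeping in the Duhamel integrals for both coordinates of $X(T)$ while carrying the loss $[\gamma_1(p)]^+$ consistently is the most error-prone part of the calculation; once done, all six norms of $N(u,v)$ reproduce the claimed rates with a constant proportional to $\varepsilon + \|(u,v)\|_{X(T)}^p + \|(u,v)\|_{X(T)}^q$. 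A parallel estimate on $N(u,v)-N(\tilde u,\tilde v)$, using $\big||a|^r-|b|^r\big|\lesssim |a-b|(|a|^{r-1}+|b|^{r-1})$ together with the same interpolation, produces the Lipschitz bound. Choosing $\varepsilon_0$ small enough, $N$ becomes a $\tfrac12$-contraction on a ball of radius $C\varepsilon$ in $X(\infty)$, and its unique fixed point is the desired global-in-time solution satisfying all the stated estimates.
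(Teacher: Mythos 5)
Your overall strategy coincides with the paper's: Duhamel representation, a Banach fixed-point argument in a solution space normed by the six weighted quantities, Matsumura-type $L^m$--$L^q$ linear decay estimates split at low/high frequencies, Gagliardo--Nirenberg interpolation to control $\||v|^p\|_{L^\nu}$ and $\||u_t|^q\|_{L^\nu}$, the $t/2$ time-splitting, and the loss of decay $[\gamma_1(p)]^+$ generated by the non-integrability of $(1+\tau)^{-\frac{1}{2}(p-1)}$ when $p\le 3$, with $pq>3$ ensuring integrability on the $|u_t|^q$ side. Two points deserve comment, one of which is a genuine gap.

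First, the gap: you propose to run the contraction directly in the full space $X(T)$ and to obtain the Lipschitz bound from the pointwise inequality $\bigl||a|^r-|b|^r\bigr|\lesssim |a-b|\bigl(|a|^{r-1}+|b|^{r-1}\bigr)$ ``together with the same interpolation.'' That inequality closes the difference estimates in Lebesgue norms via H\"older, but it does not yield
\begin{align*}
\bigl\||v|^p-|\bar v|^p\bigr\|_{\dot H^s}\lesssim \|v-\bar v\|_{\dot H^s}\bigl(\|v\|^{p-1}+\|\bar v\|^{p-1}\bigr)
\end{align*}
for non-integer $p$ close to $1$ (which is admissible here, e.g.\ $p=1.1$, $q=3$), since $x\mapsto |x|^p$ is then only $\mathcal{C}^1$ and difference estimates in fractional Sobolev norms break down. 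The $\dot H^s$ components of $\|\mathcal{N}[u,v]-\mathcal{N}[\bar u,\bar v]\|_{X(T)}$ would therefore not follow from your argument. The paper avoids this by contracting in a strictly weaker norm $Y_1(t)$ that drops the $\dot H^s$ and $L^2$ components of the $u$-coordinate and measures only $\|u_t\|_{L^q}$, $\|v\|_{L^p}$, $\|v\|_{\dot H^s}$; the needed difference estimates then reduce to Lebesgue norms of $|v|^p-|\bar v|^p$ and $|u_t|^q-|\bar u_t|^q$ (the $\dot H^s$ norm of $v^{\rm non}-\bar v^{\rm non}$ is recovered from the smoothing $(d_1,d_2)=(s,0)$ of the linear propagator, not from a chain rule applied to the nonlinearity). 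Uniqueness and convergence of the iteration scheme in $X_1$-balls with respect to the $Y_1$-metric is exactly Proposition~\ref{Theorem2.1}. Your scheme is repairable by adopting this two-norm device, but as written the contraction step fails.

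Second, a misattribution: the constraints $s<\tfrac32-\tfrac1p$ and $s<\tfrac32-\tfrac1q$ in \eqref{condition1.1.2} do not come from a Kato--Ponce chain rule for $|u_t|^q$ (the paper never needs $\||u_t|^q\|_{\dot H^s}$, again thanks to the propagator's smoothing). They arise in the contraction estimate from the Sobolev embeddings $H^1_{m_1}\hookrightarrow L^p$ and $H^{1-s}_{m_2}\hookrightarrow L^2$ used to absorb the high-frequency norms $H^{-1}_p$ and $H^{s-1}$, which require the existence of exponents $1<m_1<\min\{p,q\}$ and $1<m_2<\min\{2,p,q\}$ with $\tfrac{1}{m_2}\le\tfrac32-s$; this is precisely the condition $s<\min\{1,\tfrac32-\tfrac1p,\tfrac32-\tfrac1q\}$.
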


     \begin{theorem}[\textbf{Global existence in $2$D}]\label{Theorem3}
        Let $n=2$. Assume that the exponents $p, q$ satisfy $\min\{p, q\} > 1$
        and the following condition:
        \begin{align}
            pq > 2 .\label{condition3.1}
        \end{align}
        Moreover, we also assume that
        \begin{align*}
            1 < s < \min\{2, p\}
        \end{align*}
        and the initial data $(u_0, u_1, v_0, v_1) \in \mathcal{D}_{\alpha, p}^2(s)$. Then, there exists a constant $\varepsilon_0 > 0$ such that for any $\varepsilon \in (0, \varepsilon_0]$,
        we have a unique global (in time) solution 
        \begin{align*}
            (u,v) \in \left(\mathcal{C}([0, \infty), L^2) \cap \mathcal{C}^1([0, \infty), H^{s} \cap L^\alpha) \right)\times \mathcal{C}([0, \infty), H^{s} \cap L^p)
        \end{align*}
        to \eqref{Main.Eq.1} fulfilling the following estimates:
        \begin{align*}
\|u_t(t,\cdot)\|_{L^\alpha} &\lesssim \varepsilon(1+t)^{-2+ \frac{1}{\alpha}+ [\gamma_2(p)]^+} \|(u_0, u_1, v_0, v_1)\|_{\mathcal{D}_{\alpha, p}^2(s)},\\
 \|u_t(t,\cdot)\|_{L^2} &\lesssim \varepsilon(1+t)^{-\frac{3}{2}+ [\gamma_2(p)]^+} \|(u_0, u_1, v_0, v_1)\|_{\mathcal{D}_{\alpha, p}^2(s)},\\
            \|u_t(t,\cdot)\|_{\dot{H}^{s}} &\lesssim \varepsilon(1+t)^{-\frac{3}{2}-\frac{s}{2} +[\gamma_2(p)]^+} \|(u_0, u_1, v_0, v_1)\|_{\mathcal{D}_{\alpha, p}^2(s)},\\
            \|v(t,\cdot)\|_{L^p} &\lesssim \varepsilon(1+t)^{-1+\frac{1}{p}} \|(u_0, u_1, v_0, v_1)\|_{\mathcal{D}_{\alpha, p}^2(s)},\\
            \|v(t,\cdot)\|_{L^2} &\lesssim \varepsilon(1+t)^{-\frac{1}{2}} \|(u_0, u_1, v_0, v_1)\|_{\mathcal{D}_{\alpha, p}^2(s)},\\
            \| v(t,\cdot)\|_{\dot{H}^{s}} &\lesssim \varepsilon(1+t)^{-\frac{1}{2}-\frac{s}{2}}  \|(u_0, u_1, v_0, v_1)\|_{\mathcal{D}_{\alpha, p}^2(s)},
        \end{align*}
        where we denote
        $$\alpha := \min \{2, q\} \quad \text{ and }\quad \gamma_2(p) := 2-p +\varepsilon_2$$
        with an arbitrarily small positive constant $\varepsilon_2$.
    \end{theorem}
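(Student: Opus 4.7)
The plan is to recast (\ref{Main.Eq.1}) as a fixed-point equation for the Duhamel map
\begin{align*}
\Phi(u,v)(t,x) := \bigl(\varepsilon u^{\mathrm{lin}}(t,x) + \mathcal{N}_1[v](t,x),\ \varepsilon v^{\mathrm{lin}}(t,x) + \mathcal{N}_2[u_t](t,x)\bigr),
\end{align*}
where $u^{\mathrm{lin}}, v^{\mathrm{lin}}$ are the linear damped-wave evolutions of the prescribed data and
\begin{align*}
\mathcal{N}_1[v](t,\cdot) := \int_0^t E_1(t-\tau,\cdot)\ast_x |v(\tau,\cdot)|^p\,\rmd\tau, \qquad \mathcal{N}_2[w](t,\cdot) := \int_0^t E_1(t-\tau,\cdot)\ast_x |w(\tau,\cdot)|^q\,\rmd\tau,
\end{align*}
with $E_1(t,\cdot)$ the fundamental solution attached to the datum $u_1$. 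We then apply Banach's fixed point theorem in the complete metric space $X(T)$ whose norm is the sum of $\sup_{0\le t\le T}$ of the seven weighted quantities $(1+t)^{\beta_j}\|\cdot\|_{Y_j}$ read off the statement of Theorem \ref{Theorem3}. The linear part is controlled by $C\varepsilon\|(u_0,u_1,v_0,v_1)\|_{\mathcal{D}_{\alpha,p}^{2}(s)}$ via Matsumura-type $L^m$-$L^q$ and fractional-Sobolev decay estimates for the damped wave equation, the high-frequency exponentially decaying remainder being absorbed by the Bessel-potential components $H^{\delta(h_1)+1}_{h_1}$, $H^{\sigma(h_1)}_{h_1}$, etc., of $\mathcal{D}_{\alpha,p}^{2}(s)$, in the spirit of \cite{DabbiccoEbert2017, Ikeda2019}; this is precisely why the regularity exponents $\sigma(h)$ and $\delta(h)$ are built into the data space.

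The nonlinear estimates rest on the identities $\||v|^p\|_{L^m}=\|v\|_{L^{mp}}^p$ and $\||u_t|^q\|_{L^m}=\|u_t\|_{L^{mq}}^q$ for $m\in\{1,\alpha\}$, with the high $L^{mp}, L^{mq}$ norms obtained by Gagliardo--Nirenberg interpolation between $L^p$ (respectively $L^\alpha, L^2$) and $\dot H^s$; the assumption $1<s<\min\{2,p\}$ is exactly what renders the GN weights admissible in dimension two while keeping the $|v|^p$ factor usable in $\dot H^s$. Inserting these bounds into the Duhamel integrals produces time convolutions of the form $\int_0^t(1+t-\tau)^{-a}(1+\tau)^{-b}\,\rmd\tau$, whose standard splitting at $\tau=t/2$ yields the asymptotics $(1+t)^{-a}$ when $b>1$, $(1+t)^{-a}\log(1+t)$ when $b=1$, and $(1+t)^{1-a-b}$ when $b<1$. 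A direct computation then shows that the $v$-component of $\Phi$ closes in $X(T)$ without any loss of decay precisely when $q\bigl(2-[\gamma_2(p)]^+\bigr)>2$, which, given $\gamma_2(p)=2-p+\varepsilon_2$ with $\varepsilon_2>0$ arbitrarily small, is equivalent to the hypothesis $pq>2$. The $u$-component, in contrast, accrues the loss $(1+t)^{[\gamma_2(p)]^+}$ whenever $p<2$, because the time exponent $p-1$ produced by $\|v\|_{L^p}^p$ is then subintegrable; this is precisely the origin of the factor $[\gamma_2(p)]^+$ in the target $u$-estimates. Summing up the seven contributions yields the self-mapping bound
\begin{align*}
\|\Phi(u,v)\|_{X(T)} \le C\varepsilon\|(u_0,u_1,v_0,v_1)\|_{\mathcal{D}_{\alpha,p}^{2}(s)} + C\|(u,v)\|_{X(T)}^{\min\{p,q\}},
\end{align*}
and the Lipschitz estimate for $\Phi(u^{(1)},v^{(1)})-\Phi(u^{(2)},v^{(2)})$ follows from the same bounds combined with the pointwise inequality $\bigl||a|^p-|b|^p\bigr|\lesssim (|a|^{p-1}+|b|^{p-1})|a-b|$ and its analogue for the derivative nonlinearity. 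Choosing $\varepsilon_0$ sufficiently small and letting $T\to\infty$ yields the unique global solution, with continuity in time inherited from strong continuity of the linear damped-wave semigroup.

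The principal obstacle is the derivative-type nonlinearity $|u_t|^q$. We must track $u_t$ simultaneously in three coupled scales ($L^\alpha$, $L^2$, and $\dot H^s$), each already burdened with the loss $(1+t)^{[\gamma_2(p)]^+}$ inherited from $|v|^p$ in the $u$-equation, and then verify that once these loss-laden bounds are fed back into $\mathcal{N}_2$, the hypothesis $pq>2$ precisely cancels the $q$-fold loss $q[\gamma_2(p)]^+ = q(2-p)_+ + q\varepsilon_2$, so that the $v$-component of $\Phi$ closes with no loss at all. This balancing mechanism is what isolates $pq=1+2/n=2$ as the critical curve in the two-dimensional setting, and is the reason for introducing the loss parameter $\varepsilon_2$ in the definition of $\gamma_2(p)$.
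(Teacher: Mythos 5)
Your overall strategy for the self-mapping bound coincides with the paper's: write the solution via Duhamel, measure it in a weighted-in-time norm built from the seven estimates of the statement, bound $\||v|^p\|_{L^m}=\|v\|_{L^{mp}}^p$ and $\||u_t|^q\|_{L^m}=\|u_t\|_{L^{mq}}^q$ by Gagliardo--Nirenberg interpolation against $\dot H^{s}$, split the time convolution at $\tau=t/2$, and observe that the loss $[\gamma_2(p)]^+$ enters only through the $u$-equation while $pq>2$ guarantees $-2q+1+q[\gamma_2(p)]^+<-1$ so that the $v$-component closes without loss. Two ingredients of the self-map estimate are glossed over but recoverable: the $\dot H^{s}$-norm of $|v|^p$ and the $\dot H^{s-1}$-norm of $|u_t|^q$ (needed for the $\dot H^{s}$ estimates of $u_t^{\rm non}$ and $v^{\rm non}$) require the fractional power rule and fractional chain rule, not just the Lebesgue identity $\||v|^p\|_{L^m}=\|v\|_{L^{mp}}^p$; likewise, when $\alpha=q<2$ the high-frequency part of the $L^q$ linear estimate forces you to control $\||v|^p\|_{\dot H^{\sigma(q)}_q}$, which again needs the fractional chain rule. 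You acknowledge the role of $s<p$ here, so these are omissions of detail rather than of ideas.

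The genuine gap is in the contraction step. You propose to run Banach's fixed point theorem directly in $X(T)$, whose norm contains $\|u_t\|_{\dot H^{s}}$ and $\|v\|_{\dot H^{s}}$, and you claim the Lipschitz estimate "follows from the same bounds combined with the pointwise inequality $\bigl||a|^p-|b|^p\bigr|\lesssim(|a|^{p-1}+|b|^{p-1})|a-b|$." That pointwise inequality only yields Lebesgue-norm bounds for $|v|^p-|\bar v|^p$ and $|u_t|^q-|\bar u_t|^q$; it gives nothing for their $\dot H^{s}$ or $\dot H^{s-1}$ norms, which you would need to close the difference estimate for the $\dot H^{s}$ components of $X(T)$. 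A fractional chain rule for \emph{differences} of the form $|a|^p-|b|^p$ in $\dot H^{s}$ with $s>1$ is not available in the regime allowed here, where $p$ may be arbitrarily close to $1$ (only $p>s>1$ and $pq>2$ are assumed), so this step would fail as written. The paper circumvents this by the standard two-norm device (its Proposition 2.1): the iteration is shown to be bounded in $X_2(t)$ but contractive only in the weaker metric of an auxiliary space $Y_2(t)$, which for the $v$-component carries only $L^{\beta}$ and $\dot H^{1-\kappa}$ and for the $u$-component only $\|u_t\|_{L^2}$; moreover the $\dot H^{1-\kappa}$ norm of $v^{\rm non}-\bar v^{\rm non}$ is obtained by letting the linear propagator absorb the derivatives (taking $(d_1,d_2)=(1-\kappa,0)$ in the linear estimate), so that only $L^{m}$-norms of the differences of nonlinearities are ever required --- exactly where your pointwise inequality suffices. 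Your argument needs this (or an equivalent) modification to be complete.
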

    \begin{remark}
    \fontshape{n}
\selectfont
        It is obvious that two quantities $[\gamma_1(p)]^+$ and $[\gamma_2(p)]^+$ appearing in
Theorems \ref{Theorem1} and \ref{Theorem3} are nonnegative. Comparing the estimated solutions achieved for (\ref{Main.Eq.1}) with the corresponding ones for the linear equation (\ref{Main.Eq.2}) (see Lemmas \ref{LinearEstimates} and \ref{LinearEstimates_2D}),  we can understand that these quantities represent
some loss of decay for the norms of $u(t,\cdot)$ in a comparison with the corresponding equation. 
    \end{remark}

    \begin{theorem}[\textbf{Blow-up}]\label{Theorem2}
        Let $n \geq 1$ and $(u_0, u_1, v_0, v_1) \in (L^1)^4$ satisfying
        \begin{align}\label{condition2.1}
        \int_{\mathbb{R}^n} u_1(x) dx > 0 \text{ and } \int_{\mathbb{R}^n} \big(v_0(x) + v_1(x)\big) dx > 0.
        \end{align}
        We assume that the exponents $p,q$ satisfy $\min\{p, q\} > 1$ and the following condition:
        \begin{align}\label{condition2.2}
            pq < 1 + \frac{2}{n}.
        \end{align}
        Then, there is no global weak solution  $(u,v) \in \mathcal{C}^1([0, \infty), L^2) \times \mathcal{C}([0, \infty), L^2)$ to \eqref{Main.Eq.1}.
    \end{theorem}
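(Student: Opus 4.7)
The plan is to apply the Mitidieri--Pohozaev test function method with a parabolic space-time scaling $t\sim R^2$ suggested by the diffusion phenomenon for damped wave equations. Suppose to the contrary that a global weak solution $(u,v)$ exists. Choose cut-offs $\eta\in\mathcal{C}^\infty_0(\R)$ with $\eta\equiv 1$ on $[-1/2,1/2]$, $\mathrm{supp}\,\eta\subset[-1,1]$, and $\psi\in\mathcal{C}^\infty_0(\R^n)$ radial with $\psi\equiv 1$ on $\{|x|\le 1/2\}$, $\mathrm{supp}\,\psi\subset\{|x|\le 1\}$, and set $\varphi_R(t,x)=\eta(t/R^2)^{l}\,\psi(x/R)^{l}$ with $l$ sufficiently large. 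Note that $\partial_t\varphi_R(0,\cdot)\equiv 0$ and $\varphi_R(0,\cdot)=:\psi_R$. Write $J_p:=\iint|v|^p\varphi_R\,dx\,dt$ and $J_q:=\iint|u_t|^q\varphi_R\,dx\,dt$.

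Testing the two equations of \eqref{Main.Eq.1} against $\varphi_R$ and integrating by parts (with one further integration in $t$ for the first equation so that $u_t$, not $u$, appears) one arrives at
\begin{align*}
J_p+\varepsilon\!\int u_1\psi_R\,dx &=\iint u_t(\varphi_R-\partial_t\varphi_R)\,dx\,dt-\iint u\,\Delta\varphi_R\,dx\,dt,\\
J_q+\varepsilon\!\int(v_0+v_1)\psi_R\,dx &=\iint v\,(\partial_t^2-\Delta-\partial_t)\varphi_R\,dx\,dt.
\end{align*}
H\"older's inequality applied to the second identity, together with the scaling bound $|\partial_t^2\varphi_R|+|\Delta\varphi_R|+|\partial_t\varphi_R|\lesssim R^{-2}\varphi_R^{1/p}$ (valid for $l$ large enough), gives
\begin{equation*}
J_q+\varepsilon\!\int(v_0+v_1)\psi_R\,dx \;\lesssim\; J_p^{1/p}\,R^{(n+2)/p'-2}. \tag{I}
\end{equation*}
For the first identity, H\"older bounds the $u_t$-term by $J_q^{1/q}R^{(n+2)/q'}$ (using $\iint\varphi_R\lesssim R^{n+2}$), but the term $\iint u\,\Delta\varphi_R$ is the main obstacle, because $u$ does not pair directly with $J_q$.

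To overcome this, I use $u\in\mathcal{C}^1([0,\infty),L^2)$ to write $u(t,x)=\varepsilon u_0(x)+\int_0^t u_s(s,x)\,ds$. The $\varepsilon u_0$ piece contributes $O(\varepsilon\|u_0\|_{L^1})$ since $\int_0^\infty|\Delta\varphi_R(t,\cdot)|\,dt$ is uniformly bounded in $R$ (a spatial $R^{-2}$ from $\Delta_x\psi(\cdot/R)$ cancels the $R^2$ from the $t$-integral). The remaining part, via Fubini, equals $\iint u_s(s,x)H_R(s,x)\,ds\,dx$ with $H_R(s,x):=\int_s^\infty|\Delta\varphi_R(t,x)|\,dt$; a direct calculation exploiting the vanishing of $\eta^l$ near $t=R^2$ produces $(\iint H_R^{q'}\varphi_R^{-q'/q}\,ds\,dx)^{1/q'}\lesssim R^{(n+2)/q'}$, so that
\begin{equation*}
J_p \;\lesssim\; J_q^{1/q}\,R^{(n+2)/q'}+\varepsilon. \tag{II}
\end{equation*}

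Plugging (II) into (I) and applying $(a+b)^{1/p}\le a^{1/p}+b^{1/p}$ (valid for $p\ge 1$) together with the identity $1/(pq')+1/p'=1-1/(pq)$ yields
\begin{equation*}
J_q+\varepsilon\!\int(v_0+v_1)\psi_R\,dx \;\lesssim\; J_q^{1/(pq)}\,R^{\Theta}+\varepsilon^{1/p}R^{(n+2)/p'-2},
\end{equation*}
with $\Theta=(n+2)(pq-1)/(pq)-2$. Under \eqref{condition2.2} we have $\Theta<0$ and $(n+2)/p'-2<0$ (the latter because $p<pq<1+2/n$), and a dichotomy on whether $J_q$ stays bounded as $R\to\infty$ forces $J_q\to 0$: if $J_q$ is bounded the right-hand side tends to $0$ directly, otherwise $J_q\lesssim J_q^{1/(pq)}R^{\Theta}$ gives $J_q^{(pq-1)/(pq)}\lesssim R^{\Theta}\to 0$. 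However, by dominated convergence $\varepsilon\int(v_0+v_1)\psi_R\,dx\to\varepsilon\int(v_0+v_1)\,dx>0$ thanks to \eqref{condition2.1}, which contradicts the forced vanishing of the left-hand side. The hard part of the argument, beyond the usual Mitidieri--Pohozaev bookkeeping, is the treatment of $\iint u\,\Delta\varphi_R$: the derivative-type nonlinearity $|u_t|^q$ mismatches the natural H\"older pairing for $u$, and the key new ingredient is to exploit the $L^1$ integrability of $u_0$ in order to replace $u$ by its time-integral representation, which is precisely what recovers the correct scaling so that the critical exponent $\Theta$ vanishes exactly on the curve $pq=1+2/n$.
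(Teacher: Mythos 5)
Your argument is correct and is essentially the paper's proof: the same parabolic test functions $\eta(t/R^2)\varphi(x/R)$, the same pair of H\"older estimates yielding $J_q+\varepsilon\rho_{2,R}\lesssim J_p^{1/p}R^{(n+2)/p'-2}$ and $J_p\lesssim J_q^{1/q}R^{(n+2)/q'}$, and the same combination giving an exponent that is negative exactly when $pq<1+2/n$. The step you flag as the key new ingredient --- writing $u=\varepsilon u_0+\int_0^t u_s\,ds$ and applying Fubini to handle $\iint u\,\Delta\varphi_R$ --- is identical to the paper's integration by parts in time against the primitive $\Psi_R(t)=\int_t^\infty\eta_R(s)\,ds$, producing the same kernel $\Psi_R(s)\Delta\varphi_R(x)$ and the same $R^{(n+2)/q'}$ bound.
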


\begin{remark}
\fontshape{n}
\selectfont
From the conditions (\ref{condition1.1}), (\ref{condition3.1}) and (\ref{condition2.2}) in Theorem \ref{Theorem1}, \ref{Theorem3} and \ref{Theorem2}, respectively, we claim that
the critical curve for (\ref{Main.Eq.1}) in the $p-q$ plane is precisely described by
\begin{align*}
    pq = 1 +\frac{2}{n}.
\end{align*}
For the purpose of observing more explicitly, let us illustrate some ranges
describing results for both global existence from Theorems \ref{Theorem1}-\ref{Theorem3} and blow-up from Theorem \ref{Theorem2} in the $p-q$ plane in the following figures:

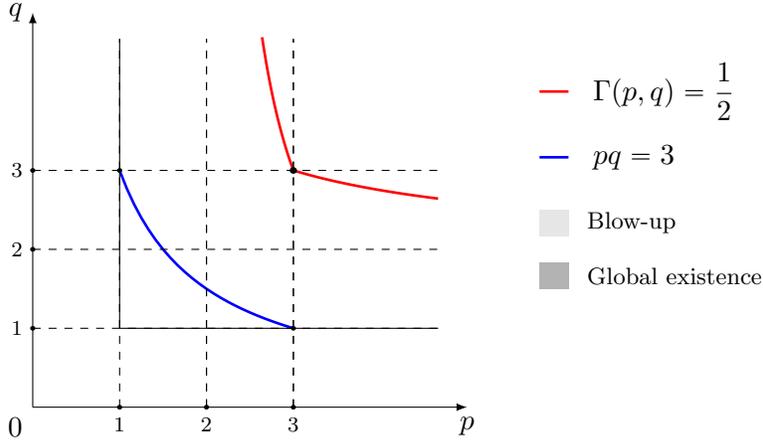
\begin{figure}[H]
\begin{center}
\begin{tikzpicture}[>=latex,xscale=0.9,scale=0.6]
%================================================Fig a

%=====================================
\draw[->] (0,0) -- (7.5,0)node[below]{$p$};
\draw[->] (0,0) -- (0,7.5)node[left]{$q$};
\node[below left] at(0,0){$0$};
\node[below] at (2.5,1.0) {};

%=====================================critical curve

%=====================================p
\draw[fill] (1.5,0) circle[radius=1pt];
\node[below] at (1.5,0){{\scriptsize $1$}};

\draw[fill] (3,0) circle[radius=1pt];
\node[below] at (3,0){{\scriptsize $2$}};
\draw[dashed] (3,0)--(3,7);

\draw[fill] (4.5,0) circle[radius=1pt];
\node[below] at (4.5,0)
{{\scriptsize $3$}};
\draw[dashed] (4.5,0)--(4.5,7);

%=====================================q
\draw[fill] (0,1.5) circle[radius=1pt];
\node[left] at (0,1.5){{\scriptsize $1$}};
\draw[dashed] (0,1.5)--(7,1.5);

\draw[fill] (0,3) circle[radius=1pt];
\node[left] at (0,3){{\scriptsize $2$}};
\draw[fill] (0,4.5) circle[radius=1pt];
\node[left] at (0,4.5){{\scriptsize $3$}};

\fill[color=black!10!white]  (1.5,1.5)--(1.5,4.5)--(1.7, 3.970588)--(2, 3.375)--(2.3, 2.9347826)--(2.6, 2.5961538)--(3, 2.25)--(3.5, 1.928571)--(3.8, 1.7763157)--(4, 1.6875)--(4.5,1.5)--cycle;

\fill[color=black!30!white] (1.5,7)--(1.5,4.5)--(1.7, 3.970588)--(2, 3.375)--(2.3, 2.9347826)--(2.6, 2.5961538)--(3, 2.25)--(3.5, 1.928571)--(3.8, 1.7763157)--(4, 1.6875)--(4.5,1.5)--(7,1.5)--(7,7)--cycle;

\draw[domain = 1.5:4.5, blue,line width=1.0pt]plot(\x,{6.75/\x});
\draw[domain = 3.96:4.5, red,line width=1.0pt]plot(\x,{6.75/(\x-3)});

\draw[domain = 4.5:7, red,line width=1.0pt]plot(\x,{3+6.75/\x});

\draw[fill] (4.5,4.5) circle[radius=1.5pt];

\draw[thin] (1.5,1.5)--(1.5,7);
\draw[thin] (1.5,1.5)--(7,1.5);
\draw[dashed] (1.5,0)--(1.5,7);
\draw[dashed] (3,0)--(3,7);
\draw[dashed] (4.5,0)--(4.5,7);
\draw[fill] (4.5,1.5) circle[radius=1pt];

\draw[dashed] (0,4.5)--(7,4.5);

\draw[dashed] (0,3)--(7,3);
\draw[fill] (1.5,4.5) circle[radius=1pt];

\fill[color=black!10!white] (9.25,3.75)--(8.75,3.75)--(8.75,3.25)--(9.25,3.25)--cycle;
\node[right] at (9.4,3.5) {{\footnotesize \text{Blow-up}}};

\fill[color=black!30!white] (9.25,2.75)--(8.75,2.75)--(8.75,2.25)--(9.25,2.25)--cycle;
\node[right] at (9.4,2.5) {{\footnotesize \text{Global existence}}};

\draw[thin, color=blue,line width=1.0pt] (9.25,4.75)--(8.75,4.75);
\node[right] at (9.5,4.75) {$pq=3$};

\draw[thin, color=red,line width=1.0pt] (9.25,6)--(8.75,6);
\node[right] at (9.5,6) {$\Gamma(p,q)=\displaystyle\frac{1}{2}$};

\end{tikzpicture}
\caption{Global existence and blow-up results in the $p-q$ plane when $n=1$.}
\label{fig.zone1}
\end{center}
\end{figure}

%...............................................................................................
\begin{figure}[H]
\begin{center}
\begin{tikzpicture}[>=latex,xscale=0.9,scale=0.6]
%================================================Fig a

%=====================================
\draw[->] (0,0) -- (7.5,0)node[below]{$p$};
\draw[->] (0,0) -- (0,7.5)node[left]{$q$};
\node[below left] at(0,0){$0$};
\node[below] at (2.5,1.0) {};

%=====================================critical curve

%=====================================p
\draw[fill] (1.5,0) circle[radius=1pt];
\node[below] at (1.5,0){{\scriptsize $1$}};

\draw[fill] (3,0) circle[radius=1pt];
\node[below] at (3,0){{\scriptsize $2$}};
\draw[dashed] (3,0)--(3,7);

\draw[fill] (4.5,0) circle[radius=1pt];
\node[below] at (4.5,0)
{{\scriptsize $3$}};
\draw[dashed] (4.5,0)--(4.5,7);

%=====================================q
\draw[fill] (0,1.5) circle[radius=1pt];
\node[left] at (0,1.5){{\scriptsize $1$}};
\draw[dashed] (0,1.5)--(7,1.5);

\draw[fill] (0,3) circle[radius=1pt];
\node[left] at (0,3){{\scriptsize $2$}};
\draw[fill] (0,4.5) circle[radius=1pt];
\node[left] at (0,4.5){{\scriptsize $3$}};

\fill[color=black!10!white]  (1.5,3)--(1.7, 2.647058)--(2, 2.25)--(2.3, 1.9565217)--(2.6, 1.730769)--(3,1.5)--(1.5,1.5)--cycle;

\fill[color=black!30!white] (1.5,3)--(1.7, 2.647058)--(2, 2.25)--(2.3, 1.9565217)--(2.6, 1.730769)--(3, 1.5)--(7,1.5)--(7,3)--cycle;

\fill[color=black!30!white](1.5,3)--(7,3)--(7,7)--(1.5,7)--cycle;

\draw[domain = 1.5:3, blue,line width=1.0pt]plot(\x,{4.5/\x});

\draw[domain = 2.14:3, red,line width=1.0pt]plot(\x,{4.5/(\x-1.5)});

\draw[domain = 3:7, red,line width=1.0pt]plot(\x,{1.5 +4.5/\x});

\draw[fill] (3,3) circle[radius=1.5pt];

\draw[thin] (1.5,1.5)--(1.5,7);
\draw[thin] (1.5,1.5)--(7,1.5);
\draw[dashed] (1.5,0)--(1.5,7);
\draw[dashed] (3,0)--(3,7);
\draw[dashed] (4.5,0)--(4.5,7);
\draw[fill] (4.5,1.5) circle[radius=1pt];

\draw[dashed] (0,4.5)--(7,4.5);

\draw[dashed] (0,3)--(7,3);
\draw[fill] (1.5,4.5) circle[radius=1pt];

\fill[color=black!10!white] (9.25,3.75)--(8.75,3.75)--(8.75,3.25)--(9.25,3.25)--cycle;
\node[right] at (9.4,3.5) {{\footnotesize \text{Blow-up}}};

\fill[color=black!30!white] (9.25,2.75)--(8.75,2.75)--(8.75,2.25)--(9.25,2.25)--cycle;
\node[right] at (9.4,2.5) {{\footnotesize \text{Global existence}}};

\draw[thin, color=blue,line width=1.0pt] (9.25,4.75)--(8.75,4.75);
\node[right] at (9.5,4.75) {$pq=2$};

\draw[thin, color=red,line width=1.0pt] (9.25,6)--(8.75,6);
\node[right] at (9.5,6) {$\Gamma(p,q)=1$};

\end{tikzpicture}
\caption{Global existence and blow-up results in the $p-q$ plane when $n=2$.}
\label{fig.zone2}
\end{center}
\end{figure}
The above figures clearly indicate that substituting the nonlinear term $|u|^q$
  with $|u_t|^q$
  in system (\ref{Main.Eq.3}) results in the critical curve moving closer to the origin $(1,1)$.
\end{remark}

\textbf{This paper is organized as follows:} In Section \ref{section3}, we provide the proof of global (in time) existence results for solutions to the problem (\ref{Main.Eq.1}). Next, in Section \ref{Proof of blow-up results}, we establish the blow-up result in the subcritical case as well. Finally, we will present some open problems in Section \ref{Final Remarks}.
    
\section{ Global existence of small data solution}\label{section3}

\subsection{Philosophy of our approach}\label{Philosophy}
First, we can write the solution to linear problemm $(\ref{Main.Eq.2})$ with $\mu=0$ by the formula
\begin{equation*}
\phi^{\rm lin} :=\phi^{\rm lin}(t,x) = \varepsilon (\mathcal{K}(t,x) +\partial_t \mathcal{K}(t,x))\ast_x \phi_0(x) + \varepsilon \mathcal{K}(t,x) \ast_x \phi_1(x),
\end{equation*}
so that the solution to $(\ref{Main.Eq.1})$ becomes

\begin{align*}
\begin{cases}
\vspace{0.4cm}
    u(t,x) = u^{\rm lin}(t,x)+ u^{\rm non}(t,x) := u^{\rm lin}(t,x) +  \displaystyle\int_0^t \mathcal{K}(t-\tau,x) \ast_x |v(\tau,x)|^p d\tau,\\
    v(t,x) = v^{\rm lin}(t,x) + v^{\rm non}(t,x) :=   v^{\rm lin}(t,x) +\displaystyle\int_0^t \mathcal{K}(t-\tau,x) \ast_x |u_t(\tau,x)|^q d\tau,
    \end{cases}
\end{align*}
thanks to Duhamel's principle. Here
\begin{align*}
    \mathcal{K}(t,x) :=  \begin{cases}
        \vspace{0.3cm}\frak{F}^{-1}\left(\displaystyle\frac{e^{-\frac{t}{2}}\sinh{\left(t \sqrt{\frac{1}{4} -|\xi|^2}\right)}}{\sqrt{\frac{1}{4}- |\xi|^2}} \right)(t,x) &\text{ if } |\xi| \leq \displaystyle\frac{1}{2},\\
        \displaystyle\frak{F}^{-1}\left(\frac{e^{-\frac{t}{2}}\sin{\left(t \sqrt{|\xi|^2-\frac{1}{4}}\right)}}{\sqrt{|\xi|^2-\frac{1}{4}}}\right)(t,x) &\text{ if } |\xi| > \displaystyle\frac{1}{2}.
    \end{cases}
\end{align*}
Let $\chi_k= \chi_k(r)  \in \mathcal{C}^{\infty}([0, \infty))$ with $k\in\{\rm L,H\}$ be smooth cut-off functions having the following properties:
\begin{align*}
&\chi_{\rm L}(r)=
\begin{cases}
1 &\quad \text{ if }r\le \varepsilon^*/2, \\
0 &\quad \text{ if }r\ge \varepsilon^*,
\end{cases}
\text{ and } \qquad
\chi_{\rm H}(r)= 1 -\chi_{\rm L}(r).
\end{align*}
It is obvious to see that $\chi_{\rm H}(r)= 1$ if $r \geq \varepsilon^*$ and $\chi_{\rm H}(r)= 0$ if $r \le \varepsilon^*/2$.\\

%.........................................................
\subsection{Proof of Theorem \ref{Theorem1}} 

To begin, we restate the important result as follows, the result related to the linear estimates in the one-dimensional case is restated in the following lemma.

\begin{lemma}[Linear Estimates in 1D]\label{LinearEstimates}
    Let $n=1$, $1 \leq \rho_2 \leq \rho_1 < \infty$ with $\rho_1 \ne 1$ and $0 \leq d_2 \leq d_1$. Then, the following estimates hold for all $t > 0$ (see Theorem 1.1 in \cite{Ikeda2019}):
    \begin{align*}
        &\| |\nabla|^{d_1} \mathcal{K}(t,x)\ast_x \varphi(x)\|_{L^{\rho_1}}\\
        &\qquad\lesssim (1+t)^{-\frac{1}{2}(\frac{1}{\rho_2}-\frac{1}{\rho_1})-\frac{d_1-d_2}{2}} \| |\nabla|^{d_2} \chi_{\rm L}(|\nabla|) \varphi\|_{L^{\rho_2}} + e^{-ct} \| |\nabla|^{d_1} \chi_{\rm H}(|\nabla|) \varphi\|_{H_{\rho_1}^{-1}},\\ 
        &\| |\nabla|^{d_1} \partial_t \mathcal{K}(t,x)\ast_x \varphi(x)\|_{L^{\rho_1}} \\
        &\qquad\lesssim (1+t)^{-\frac{1}{2}(\frac{1}{\rho_2}-\frac{1}{\rho_1})-\frac{d_1-d_2}{2}-1} \| |\nabla|^{d_2} \chi_{\rm L}(|\nabla|) \varphi\|_{L^{\rho_2}} + e^{-ct} \| |\nabla|^{d_1} \chi_{\rm H}(|\nabla|) \varphi\|_{L^{\rho_1}},
    \end{align*}
    where $c$ is a suitable positive constant. Moreover, we also obtain the following estimates (see Propositions 4.1 and 4.2 in \cite{DabbiccoEbert2017}):
    \begin{align*}
        &\||\nabla|^{d_1} \partial_t^2 \mathcal{K}(t,x) \ast_{x} \varphi(x) \|_{L^{\rho_1}} \\
        &\qquad\lesssim (1+t)^{-\frac{1}{2}(\frac{1}{\rho_2}-\frac{1}{\rho_1})-\frac{d_1}{2}-2} \|\chi_{\rm L}(|\nabla|)\varphi\|_{L^{\rho_2}} + e^{-ct} \|\chi_{\rm H}(|\nabla|)\varphi\|_{H_{\rho_1}^{\delta(\rho_1)+d_1+1}},  
    \end{align*}
    where
    \begin{align*}
    \delta(\rho_1) := \left|\frac{1}{\rho_1}-\frac{1}{2}\right|.
\end{align*}
\end{lemma}
\begin{remark}
\fontshape{n}
\selectfont
    In this paper, the third estimate in Lemma \ref{LinearEstimates} is used exclusively to identify the appropriate data space for establishing Theorem \ref{Theorem1}. The first and second estimates, on the other hand, will be employed frequently in our calculations.
\end{remark}

 Under the assumptions of Theorem \ref{Theorem1},  we define the following function spaces for all $T > 0$:
\begin{align*}
    X_{11}(T) &:=   L^\infty([0, T], H^s \cap L^q), \quad
    X_{12}(T) := L^\infty([0, T], H^s \cap L^p)
\end{align*}
and
\begin{align*}
    X_1(T) := X_{11}(T) \times X_{12}(T),
\end{align*}
with the norms
\begin{align*}
\|\varphi_{1}\|_{X_{11}(T)} &:= \sup_{t \in [0, T]}\bigg\{(1+t)^{\frac{1}{2}(1-\frac{1}{q})+1-[\gamma_1(p)]^+}\|\varphi_1(t,\cdot)\|_{L^q} \\
&\hspace{2cm}+ (1+t)^{\frac{5}{4}-[\gamma_1(p)]^+}\|\varphi_1(t,\cdot)\|_{L^2} + (1+t)^{\frac{5}{4}+\frac{s}{2}-[\gamma_1(p)]^+} \| \varphi_1(t,\cdot)\|_{\dot{H}^s}\bigg\},\\
\|\varphi_2\|_{X_{12}(T)} &:= \sup_{t \in [0, T]}\left\{(1+t)^{\frac{1}{2}(1-\frac{1}{p})}\|\varphi_2(t, \cdot)\|_{L^p} + (1+t)^{\frac{1}{4}} \|\varphi_2(t,\cdot)\|_{L^2}+ (1+t)^{\frac{1}{4}+\frac{s}{2}}  \|\varphi_2(t,\cdot)\|_{\dot{H}^s}\right\}
\end{align*}
and 
\begin{align*}
    \|(\varphi_1, \varphi_2)\|_{X_1(T)} := \|\varphi_1\|_{X_{11}(T)} + \|\varphi_2\|_{X_{12}(T)},
\end{align*}
respectively.
In addition, we also introduce the following function spaces for all $T > 0$:
\begin{align*}
    Y_{11}(T) := L^\infty([0, T], L^q), \quad Y_{12}(T) :=  L^\infty([0, T], H^s \cap L^p),
\end{align*}
and
\begin{align*}
    Y_1(T) := Y_{11}(T) \times Y_{12}(T),
\end{align*}
with the norms
\begin{align*}
    \|\varphi_1\|_{Y_{11}(T)} := &\sup _{t \in [0,T]} \bigg\{(1+t)^{1-\frac{1}{2q}+\frac{1}{2p}-[\gamma_1(p)]^+}\|\varphi_1(t,\cdot)\|_{L^q} \bigg\},\\
    \|\varphi_2\|_{Y_{12}(T)} := &\sup_{t \in [0, T]}\bigg\{ \|\varphi_2(t, \cdot)\|_{L^p} + (1+t)^{\frac{1}{2}(\frac{1}{p}-\frac{1}{2})}\|\varphi_2(t,\cdot)\|_{L^2}+ (1+t)^{\frac{1}{2}(\frac{1}{p}-\frac{1}{2}+s)}\|\varphi_2(t,\cdot)\|_{\dot{H}^s}\bigg\}
\end{align*}
and
\begin{align*}
    \|(\varphi_1, \varphi_2)\|_{Y_1(T)} := \|\varphi_1\|_{Y_{11}(T)} + \|\varphi_2\|_{Y_{12}(T)},
\end{align*}
respectively.
Next, we note that
\begin{align*}
    Z_1(T) := \{\varphi \text{ is a first-order differentiable  (in time) function on $[0,T]$ and } \varphi_t \in X_{11}(T)\}
\end{align*}
and
\begin{align*}
X_{11}(T, M) &:= \big\{\varphi_1 \in X_{11}(T), \|\varphi_1\|_{X_{11}(T)} \leq M\big\},\\
    X_{12}(T, M) &:= \big\{\varphi_2 \in X_{12}(T), \|\varphi_2\|_{X_{12}(T)} \leq M\big\},\\
    X_1(T,M) &:= \big\{(\varphi_1, \varphi_2) \in X_1(T), \|(\varphi_1, \varphi_2)\|_{X_1(T)} \leq M \},
\end{align*}
for all $T, M > 0$. We now proceed to the following important property.
\begin{lemma}\label{lemma1.2}
According to the assumptions of Theorem \ref{Theorem1},  $X_{11}(T, M)$  is a closed subset of $Y_{11}(T)$ with respect to the metric $Y_{11}(T)$. Moreover,  $X_{12}(T, M)$  is also a closed subset of $Y_{12}(T)$ with respect to the metric $Y_{12}(T)$
\end{lemma}
\begin{proof}
Firstly, we can easily see that $X_{11}(T) \subset Y_{11}(T)$ and $X_{12}(T) \subset Y_{12}(T)$ for all $T > 0$. As the proofs of each statement are entirely similar, we restrict ourselves to presenting the proof for
$X_{11}(T,M)$. Therefore, it suffices to show that
if a sequence in $X_{11}(T, M)$ converging in $Y_{11}(T)$, its limit will belong to $X_{11}(T, M)$. More specifically, we assume that $\{\varphi_j\}_{j=1}^{\infty} \subset X_{11}(T,M)$ and $\varphi_j \to \varphi \in Y_{11}(T)$ as $j \to \infty$. It is a fact that
    \begin{align*}
        L^\infty\left([0,T], H^s \cap L^q \right) = \left(L^1([0,T], H^{-s}  + L^{q'})\right)^*,
    \end{align*}
    where $q' = q/(q-1)$. Due to the separability of $L^1([0,T], H^{-s} + L^{q'})$, we apply Banach-Alaoglu theorem (see Theorem 3.16 or Corollary 3.30 in \cite{Brezis2011}) to take a subsequence $\{\varphi_{j(m)}\}_{m=1}^\infty$ and $\psi \in L^\infty\left([0,T], H^s  \cap L^q \right) $ such that
    \begin{align*}
        \varphi_{j(m)} \overset{*}{\rightharpoonup} \psi \quad\text{ as } \quad m \to \infty .
    \end{align*}
    In addition, it is a fact that
    \begin{align*}
        \|\psi\|_{X_{11}(T)} \leq \liminf_{m\to \infty} \|\varphi_{j(m)}\|_{X_{11}(T)} \leq M, 
    \end{align*}
    that is, $\psi \in X_{11}(T, M)$.
    On the other hand, both $\{\varphi_j\}_{j=1}^\infty$
and $\{\varphi_{j(m)}\}_{m=1}^{\infty}$
converge in the space of the distribution $
\mathcal{D}'([0, T] \times \mathbb{R}^n)$, that is,
\begin{align*}
    \varphi_{j(k)} &\to \psi \in \mathcal{D}'([0, T] \times \mathbb{R}^n) \quad\text{ as }\quad k \to \infty,\\
    \varphi_{j} &\to \varphi \in \mathcal{D}'([0, T] \times \mathbb{R}^n) \quad\text{ as } \quad m \to \infty.
\end{align*}
As a result, the uniqueness of the limit of distribution implies $\psi \equiv \varphi, $ which shows $\varphi \in X_{11}(T,M)$. 
\end{proof}
Next, we consider the operators $\mathcal{N}_{11}$ and $\mathcal{N}_{12}$ on the spaces $Z_1(T)$ and $X_{12}(T)$, respectively, as follows:
\begin{align*}
    \mathcal{N}_{11}: u \in Z_1(T) \to \mathcal{N}_{11}[u] := u^{\rm lin} + u^{\rm non}, \quad  \mathcal{N}_{12}: v\in X_{12}(T) \to \mathcal{N}_{12}[v] := v^{\rm lin} + v^{\rm non}.
\end{align*}
In order to prove Theorem \ref{Theorem1}, we will show a pair $(u^*, v^*)$ that satisfies the following integral equation
\begin{align*}
    (u^*, v^*) = (\mathcal{N}_{11}[u^*], \mathcal{N}_{12}[v^*]),
\end{align*}
for all $T > 0$.
Now, we are proving the following auxiliary results.
\begin{lemma}\label{Lemma2.3}
    Under the assumptions of Theorem \ref{Theorem1}, the following estimates hold for all $(u, v) \in Z_1(T) \times X_{12}(T)$, $\nu \geq 1$ and $T > 0$: 
    \begin{align}
        \| |u_t(t,\cdot)|^q  \|_{L^{\nu}} &\lesssim (1+t)^{-q-\frac{1}{2}(q-\frac{1}{\nu})+q[\gamma_1(p)]^+} \|u_t\|_{X_{11}(T)}^q\label{es3.1}
        \end{align}
        and
        \begin{align}
        \| |v(t,\cdot)|^p \|_{L^{\nu}} &\lesssim (1+t)^{-\frac{1}{2}(p-\frac{1}{\nu})} \|v\|_{X_{12}(T)}^{p},\label{es3.2}\\
        \||v(t,\cdot)|^p \|_{\dot{H}^s} &\lesssim (1+t)^{-\frac{1}{2}(p-\frac{1}{2})-\frac{s}{2}+\frac{\varepsilon_1}{2}} \|v\|_{X_{12}(T)}^{p} \label{es3.3},
    \end{align}
    where $\varepsilon_1$ is an arbitrarily small positive constant.
\end{lemma}
\begin{proof}

    Applying Proposition \ref{fractionalGagliardoNirenberg} we obtain 
    \begin{align*}
        \||u_t(t,\cdot)|^q\|_{L^{\nu}} = \|u_t(t,\cdot)\|_{L^{\nu q}}^q &\lesssim \|u_t(t,\cdot)\|_{L^q}^{q(1-\theta_{1})} \|u_t(t,\cdot)\|_{\dot{H}^s}^{q\theta_{1}}\\
        &\lesssim (1+t)^{-q-\frac{1}{2}(q-\frac{1}{\nu})+q[\gamma_1(p)]^+} \|u_t\|_{X_{11}(T)}^q,\\
        \||v(t,\cdot)|^p\|_{L^\nu} = \|v(t,\cdot)\|_{L^{\nu p}}^p &\lesssim \|v(t,\cdot)\|_{L^p}^{p(1-\theta_{2})} \| v(t,\cdot)\|_{\dot{H}^s}^{p\theta_{2}}\\
        &\lesssim (1+t)^{-\frac{1}{2}(p-\frac{1}{\nu})} \|v\|_{X_{12}(T)}^p,
    \end{align*}
     where
     $$\theta_{1} = \displaystyle\frac{\frac{1}{q}-\frac{1}{\nu q}}{\frac{1}{q}-\frac{1}{2}+s} \in [0,1]\quad \text{ and }\quad \theta_{2} = \displaystyle\frac{\frac{1}{p}-\frac{1}{\nu p}}{\frac{1}{p}-\frac{1}{2}+s} \in [0,1]. $$
     These conditions are satisfied due to $s\in (1/2,1)$. We immediately obtain the estimates (\ref{es3.1}) and (\ref{es3.2}). Next, we prove the estimate (\ref{es3.3}). Using Propositions \ref{FractionalPowers} and \ref{Embedding} with $s' < 1/2 < s$, it is a fact that
     \begin{align}
         \| |v(t,\cdot)|^p\|_{\dot{H}^s} &\lesssim \|v(t,\cdot)\|_{\dot{H}^s}\left(\|v(t,\cdot)\|_{\dot{H}^{s'}} + \|v(t,\cdot)\|_{\dot{H}^s}\right)^{p-1}. \label{es3.4}
     \end{align}
     From the norm definition of the space $X_{12}(T)$, we immediately obtain
     \begin{align}
         \|v(t,\cdot)\|_{\dot{H}^s} \lesssim (1+t)^{-\frac{1}{4}-\frac{s}{2}} \|v\|_{X_{12}(T)}^p. \label{es3.5}
     \end{align}
     Moreover, using again Proposition \ref{fractionalGagliardoNirenberg} we arrive at
     \begin{align}
         \|v(t,\cdot)\|_{\dot{H}^{s'}} &\lesssim \|v(t,\cdot)\|_{L^2}^{1-\frac{s'}{s}} \|v(t,\cdot)\|_{\dot{H}^s}^{\frac{s'}{s}} \lesssim (1+t)^{-\frac{1}{2}+\frac{\varepsilon_1}{2(p-1)}}\|v\|_{X_{12}(T)},\label{es3.6}
     \end{align}
     where we choose $s' = 1/2 -\varepsilon_1/(p-1)$ with $\varepsilon_1$ is an arbitrarily small positive constant. From (\ref{es3.4})-(\ref{es3.6}), we can conclude the estimate (\ref{es3.3}).
\end{proof}

\begin{proposition}\label{proposition3.1}
    Under the assumptions of Theorem \ref{Theorem1}, the following estimates hold for all $(u, v) \in Z_1(T) \times X_{12}(T)$:
    \begin{align}
    \|\partial_t u^{\rm non}\|_{X_{11}(T)} &\lesssim \|v\|_{X_{12}(T)}^{p} \label{Esti.Pro3.1.1}\\
    \|v^{\rm non}\|_{X_{12}(T)} &\lesssim \|u_t\|_{X_{11}(T)}^q. \label{Esti.Pro3.1.2}
    \end{align}
\end{proposition}
\begin{proof}
To begin with, we recall the definition of $u^{\rm non}$ and $v^{\rm non}$ as follows:
\begin{align*}
\begin{cases}
\vspace{0.2cm}
    u^{\rm non}(t,x) &= \displaystyle\int_0^t \mathcal{K}(t-\tau, x) \ast_x |v(\tau,x)|^p d\tau,\\
    v^{\rm non}(t,x) &= \displaystyle\int_0^t \mathcal{K}(t-\tau, x) \ast_x |u_t(\tau,x)|^p d\tau.
    \end{cases}
\end{align*}
   \textbf{First, we prove the estimate (\ref{Esti.Pro3.1.1})}. Fixing $\mu_1 \in \{2,q\}$ and applying Lemma \ref{LinearEstimates} with $\rho_2 = 1, \rho_1 = \mu_1$, $(d_1 , d_2) = (0,0)$ for $\tau \in [0, t/2)$, and $\rho_2=\rho_1 =\mu_1$, $(d_1 , d_2)=(0, 0) $ for $\tau \in [t/2, t]$ to obtain
   \begin{align}
       \| \partial_t u^{\rm non}(t,\cdot)\|_{L^{\mu_1}} &\lesssim \int_0^{t/2} (1+t-\tau)^{-\frac{1}{2}(1-\frac{1}{\mu_1})-1} \||v(\tau,\cdot)|^p\|_{L^1 \cap L^{\mu_1}} d\tau\notag\\
       &\hspace{2cm} + \int_{t/2}^t (1+ t-\tau)^{-1} \| |v(\tau,\cdot)|^p\|_{L^{\mu_1}} d\tau\notag\\
       &\lesssim (1+t)^{-\frac{1}{2}(1-\frac{1}{\mu_1})-1} \|v\|_{X_{12}(T)}^p\int_0^{t/2} (1+\tau)^{-\frac{1}{2}(p-1)} d\tau\notag\\
       &\hspace{2cm}+ (1+t)^{-\frac{1}{2}(p-\frac{1}{\mu_1})} \|v\|_{X_{12}(T)}^p \int_{t/2}^t (1+t-\tau)^{-1} d\tau\notag\\
       &\lesssim (1+t)^{-\frac{1}{2}(1-\frac{1}{\mu_1})-1+[\gamma_1(p)]^+} \|v\|_{X_{12}(T)}^p. \label{Main.Es.1}
   \end{align}
   In the second estimate, we used estimate (\ref{es3.2}) with $\nu = 1, \mu_1$. By using (\ref{es3.3}) and Lemma \ref{LinearEstimates} with $\rho_2=1, \rho_1 =2$, $(d_1, d_2)= (s,0)$ for $\tau \in [0, t/2)$ and $\rho_2=\rho_1 =2$, $(d_1, d_2)= (s,s)$ for $\tau \in [t/2, t]  $ , we arrive at
   \begin{align}
       \|\partial_t u^{\rm non}(t,\cdot)\|_{\dot{H}^s} &\lesssim \int_0^{t/2} (1+t-\tau)^{-\frac{5}{4}-\frac{s}{2}} \||v(\tau,\cdot)|^p\|_{L^1 \cap \dot{H}^s} d\tau\notag\\
       &\hspace{1cm} + \int_{t/2}^t (1+t-\tau)^{-1} \||v(\tau,\cdot)|^p\|_{\dot{H}^s} d\tau\notag\\
       &\lesssim (1+t)^{-\frac{5}{4}-\frac{s}{2}} \|v\|_{X_{12}(T)}^p\int_0^{t/2} (1+\tau)^{-\frac{1}{2}(p-1)} d\tau\notag\\
       &\hspace{1cm} +(1+t)^{-\frac{1}{2}(p-\frac{1}{2})-\frac{s}{2}+\frac{\varepsilon_1}{2}} \|v\|_{X_{12}(T)}^p \int_{t/2}^t(1+t-\tau)^{-1} d\tau\notag\\
       &\lesssim (1+t)^{-\frac{5}{4}-\frac{s}{2}+[\gamma_1(p)]^+}\|v\|_{X_{12}(T)}^p. \label{Main.Es.2}
       \end{align}
   We can easily see that the estimates (\ref{Main.Es.1})-(\ref{Main.Es.2}) lead to (\ref{Esti.Pro3.1.1}).
    
     \textbf{Next, we will prove the estimate (\ref{Esti.Pro3.1.2}).} For $\mu_2 \in \{2,p\}$, using again Lemma \ref{LinearEstimates} with $\rho_2=1,\, \rho_1 =\mu_2$, $(d_1, d_2)=(0,0)$ for $\tau \in [0, t/2)$ and $\rho_2=\rho_1=\mu_2$, $(d_1, d_2)=(0,0)$ for $\tau\in [t/2, t]$ we derive
     \begin{align}
         \|v^{\rm non}(t,\cdot)\|_{L^{\mu_2}} &\lesssim \int_0^{t/2} (1+t-\tau)^{-\frac{1}{2}(1-\frac{1}{\mu_2})} \| |u_t(\tau,\cdot)|^q\|_{L^1 \cap L^{\mu_2}} d\tau\notag\\
         &\hspace{1cm} + \int_{t/2}^t \||u_t(\tau,\cdot)|^q\|_{L^{\mu_2}} d\tau\notag\\
         &\lesssim (1+t)^{-\frac{1}{2}(1-\frac{1}{\mu_2})} \|u_t\|_{X_{11}(T)}^q \int_0^{t/2} (1+\tau)^{-q-\frac{1}{2}(q-1)+q[\gamma_1(p)]^+} d\tau\notag\\
         &\hspace{1cm} +(1+t)^{-q-\frac{1}{2}(q-\frac{1}{\mu_2})+q[\gamma_1(p)]^+} \|u_t\|_{X_{11}(T)}^q \int_{t/2}^t d\tau\notag\\
         &\lesssim (1+t)^{-\frac{1}{2}(1-\frac{1}{\mu_2})} \|u_t\|_{X_{11}(T)}^q. \label{Main.Es.3}
     \end{align}
     To obtain the final estimate, we use the following relation, which is derived directly from condition (\ref{condition1.1}):
     \begin{align*}
        -q -\frac{1}{2}(q-1)+q[\gamma_1(p)]^+ < -1.
     \end{align*}
       Finally, by a similar calculations, we easily obtain
     \begin{align}
         \|v^{\rm non}(t,\cdot)\|_{\dot{H}^s} &\lesssim \int_0^{t/2} (1+t-\tau)^{-\frac{1}{4}-\frac{s}{2}} \| |u_t(\tau,\cdot)|^q\|_{L^1 \cap L^2} d\tau\notag\notag\\
         &\hspace{1cm} +\int_{t/2}^t (1+t-\tau)^{-\frac{s}{2}} \||u_t(\tau,\cdot)|^q\|_{L^2} d\tau\notag\\
         &\lesssim (1+t)^{-\frac{1}{4}-\frac{s}{2}} \|u_t\|_{X_{11}(T)}^q \int_0^{t/2} (1+\tau)^{-q-\frac{1}{2}(q-1)+q[\gamma_1(p)]^+} d\tau\notag\\
         &\hspace{1cm} + (1+t)^{-q-\frac{1}{2}(q-\frac{1}{2})+q[\gamma_1(p)]^+} \|u_t\|_{X_{11}(T)}^q \int_{t/2}^t (1+t-\tau)^{-\frac{s}{2}} d\tau \notag\\
         &\lesssim (1+t)^{-\frac{1}{4}-\frac{s}{2}} \|u_t\|_{X_{11}(T)}^q,\label{Main.Es.4}
     \end{align}
     with $1/2 < s < 1$.
     From the estimates (\ref{Main.Es.1})-(\ref{Main.Es.4}), we immediately conclude (\ref{Esti.Pro3.1.2}). Thus, the proof of Proposition \ref{proposition3.1} is completed.
\end{proof}
 
\begin{proposition}\label{proposition3.2}
    Under the assumptions of Theorem \ref{Theorem1}, the following estimates  hold for all $(u,v)$ and $(\bar{u}, \bar{v}) \in Z_1(T) \times X_{12}(T)$:
    \begin{align}
\|\partial_t \mathcal{N}_{11}[u] - \partial_t \mathcal{N}_{11}[\bar{u}]\|_{Y_{11}(T)} &\lesssim \|v-\bar{v}\|_{Y_{12}(T)} \left(\|v\|_{X_{12}(T)}^{p-1} +\|\bar{v}\|_{X_{12}(T)}^{p-1} \right), \label{Esti.Pro2.2.1}\\
\|\mathcal{N}_{12}[v]-\mathcal{N}_{12}[\bar{v}]\|_{Y_{12}(T)} &\lesssim \|u_t-\bar{u}_t\|_{Y_{11}(T)}\left(\|u_t\|_{X_{11}(T)}^{q-1} + \|\bar{u}_t\|_{X_{11}(T)}^{q-1}\right).\label{Esti.Pro2.2.2}
    \end{align}
\end{proposition}

\begin{proof}
To begin with, we can easily see that
    \begin{align*}
       (\partial_t \mathcal{N}_{11}[u] - \partial_t \mathcal{N}_{11}[\bar{u}])(t,x) &= (\partial_t u^{\rm non} -\partial_t \bar{u}^{\rm non})(t,x) \\
       &= \int_0^t \partial_t \mathcal{K}(t-\tau,x)\ast_x \left(|v(\tau,x)|^p - |\bar{v}(\tau,x)|^p\right) d\tau,\\
       (\mathcal{N}_{12}[v]-\mathcal{N}_{12}[\bar{v}])(t,x) &= (v^{\rm non}- \bar{v}^{\rm non})(t,x) \\
       &= \int_0^t \mathcal{K}(t-\tau,x) \ast_x \left(|u_t(\tau,x)|^q-|\bar{u}_t(\tau,x)|^q\right) d\tau.
    \end{align*}
\textbf{First, we prove the estimate (\ref{Esti.Pro2.2.1})}. Fixing $1< m < \min\{p,q\}$ and using Lemma \ref{LinearEstimates} with $\rho_2 = m,\, \rho_1 = q$, $(d_1, d_2) = (0,0)$ for $\tau \in [0, t/2)$ and $\rho_2= \rho_1 =q$, $(d_1, d_2) = (0,0)$ for $\tau \in [t/2, t]$  to derive
\begin{align*}
    \|\partial_t(u^{\rm non}- \bar{u}^{\rm non})(t,\cdot)\|_{L^q} &\lesssim \int_0^{t/2} (1+t-\tau)^{-\frac{1}{2}(\frac{1}{m}-\frac{1}{q})-1} \left\||v(\tau,\cdot)|^p-|\bar{v}(\tau,\cdot)|^p\right\|_{L^m\cap L^q} d\tau\\
    &\hspace{1cm} +\int_{t/2}^t (1+t-\tau)^{-1} \left\||v(\tau,\cdot)|^p-|\bar{v}(\tau,\cdot)|^p\right\|_{L^q} d\tau.
\end{align*}
After employing H\"older's  inequality, one has
\begin{align*}
    \left\||v(\tau,\cdot)|^p-|\bar{v}(\tau,\cdot)|^p\right\|_{L^{\nu}} \lesssim \|v(\tau,\cdot)-\bar{v}(\tau,\cdot)\|_{L^{\nu p}} \left(\|v(\tau,\cdot)\|_{L^{\nu p}}^{p-1} +\|\bar{v}(\tau,\cdot)\|_{L^{\nu p}}^{p-1}\right),
\end{align*}
for all $\nu \geq 1$. Thanks to Proposition \ref{fractionalGagliardoNirenberg} and the definition of $Y_{12}(T)$, one concludes that
\begin{align*}
    \|v(\tau,\cdot)-\bar{v}(\tau,\cdot)\|_{L^{\nu p}} &\lesssim \|v(\tau,\cdot)-\bar{v}(\tau,\cdot)\|_{L^p}^{1-\theta_3}  \|v(\tau,\cdot)-\bar{v}(\tau,\cdot)\|_{\dot{H}^s}^{\theta_3}\\
    &\lesssim (1+\tau)^{-\frac{1}{2p}(1-\frac{1}{\nu})}\|v- \bar{v}\|_{Y_{12}(T)},
\end{align*}
where
$$\theta_3 = \displaystyle\frac{\frac{1}{p}-\frac{1}{\nu p}}{\frac{1}{p}-\frac{1}{2}+s} \in [0,1]. $$
Moreover, using the estimate (\ref{es3.2}), we can immediately conclude that
\begin{align*}
       \|v(\tau,\cdot)\|_{L^{\nu p}}^{p-1} + \|\bar{v}(\tau,\cdot)\|_{L^{\nu p}}^{p-1} \lesssim (1+\tau)^{-\frac{p-1}{2}(1-\frac{1}{\nu p})} \left(\|v\|_{X_{12}(T)}^{p-1} + \|\bar{v}\|_{X_{12}(T)}^{p-1}\right).
\end{align*}
For this reason, we can conclude
\begin{align*}
    &\left\||v(\tau,\cdot)|^p-|\bar{v}(\tau,\cdot)|^p\right\|_{L^{\nu}} \lesssim (1+\tau)^{-\frac{1}{2p}(1-\frac{1}{\nu})-\frac{p-1}{2}(1-\frac{1}{\nu p})}\|v- \bar{v}\|_{Y_{12}(T)} \left(\|v\|_{X_{12}(T)}^{p-1} + \|\bar{v}\|_{X_{12}(T)}^{p-1}\right).
\end{align*}
From this, for $\nu = m, q$, we arrive at
\begin{align*}
     &\|\partial_t(u^{\rm non}- \bar{u}^{\rm non})(t,\cdot)\|_{L^q}\lesssim (M_1(t) + M_2(t))\|v- \bar{v}\|_{Y_{12}(T)}   \left(\|v\|_{X_{12}(T)}^{p-1} + \|\bar{v}\|_{X_{12}(T)}^{p-1}\right),
\end{align*}
where
\begin{align*}
    M_1(t) &:= (1+t)^{-\frac{1}{2}(\frac{1}{m}-\frac{1}{q})-1}  \int_0^{t/2} (1+\tau)^{-\frac{1}{2p}(1-\frac{1}{m})-\frac{p-1}{2}(1-\frac{1}{mp})} d\tau,\\
    M_2(t) &:= (1+t)^{-\frac{1}{2p}(1-\frac{1}{q})-\frac{p-1}{2}(1-\frac{1}{qp})} \int_{t/2}^t (1+t-\tau)^{-1} d\tau.
\end{align*}
We can easily see that 
\begin{align*}
    M_2(t) &\lesssim (1+t)^{-\frac{1}{2p}(1-\frac{1}{q})-\frac{p-1}{2}(1-\frac{1}{qp})} \log(1+t)\\
    &\lesssim (1+t)^{-1+\frac{1}{2q}-\frac{1}{2p} + [\gamma_1(p)]^+}.
\end{align*}
For the quantity $M_1(t)$, we divide into two cases as follows:
\begin{itemize}[leftmargin=*]
    \item \textbf{Case 1:} If $$-\frac{1}{2p}\left(1-\frac{1}{m}\right)-\frac{p-1}{2}\left(1-\frac{1}{mp}\right) \leq -1, $$ then
    \begin{align*}
        M_1(t) &\lesssim (1+t)^{-\frac{1}{2}(\frac{1}{m}-\frac{1}{q})-1} \log(1+t)\\
        &\lesssim (1+t)^{-1+\frac{1}{2q}-\frac{1}{2p}+[\gamma_1(p)]^+},
    \end{align*}
    where we note that $1 < m < \min\{p, q\}$.
    \item \textbf{Case 2:} If $$-\frac{1}{2p}\left(1-\frac{1}{m}\right)-\frac{p-1}{2}\left(1-\frac{1}{mp}\right) > -1, $$ then
    \begin{align*}
        M_1(t) &\lesssim (1+t)^{-\frac{1}{2}(\frac{1}{m}-\frac{1}{q})-\frac{1}{2p}(1-\frac{1}{m})-\frac{p-1}{2}(1-\frac{1}{mp})} \\
        &\lesssim (1+t)^{-1+\frac{1}{2q}-\frac{1}{2p} +[\gamma_1(p)]^+}.
    \end{align*}
\end{itemize}
 In summary, we have the following estimate:
    \begin{align*}
        &\|\partial_t(u^{\rm non}-\bar{u}^{\rm non})(t,\cdot)\|_{L^q} \lesssim (1+t)^{-1+\frac{1}{2q}-\frac{1}{2p} +[\gamma_1(p)]^+} \|v- \bar{v}\|_{Y_{12}(T)}   \left(\|v\|_{X_{12}(T)}^{p-1} + \|\bar{v}\|_{X_{12}(T)}^{p-1}\right). 
    \end{align*} 
This implies the estimate (\ref{Esti.Pro2.2.1}).\\

\textbf{Next, we prove the estimate (\ref{Esti.Pro2.2.2})}. Specifically, we perform the following steps. Using again Lemma \ref{LinearEstimates} with $1 < m_1 < \min\{p, q\}$ and  $1 < m_2 < \min\{2, p, q\}$, one has
\begin{align*}
    \left\|(v^{\rm non}-\bar{v}^{\rm non})(t,\cdot)\right\|_{L^p} &\lesssim \int_0^t (1+t-\tau)^{-\frac{1}{2}(\frac{1}{m_1}-\frac{1}{p})} \left\||u_t(\tau,\cdot)|^q - |\bar{u}_t(\tau,\cdot)|^q\right\|_{L^{m_1} \cap H_p^{-1}} d\tau\\
    &\lesssim \int_0^t (1+t-\tau)^{-\frac{1}{2}(\frac{1}{m_1}-\frac{1}{p})} \left\||u_t(\tau,\cdot)|^q - |\bar{u}_t(\tau,\cdot)|^q\right\|_{L^{m_1}} d\tau,\\
    \left\|(v^{\rm non}-\bar{v}^{\rm non})(t,\cdot)\right\|_{\dot{H}^{ks}} &\lesssim \int_0^t (1+t-\tau)^{-\frac{1}{2}(\frac{1}{m_2}-\frac{1}{2})-\frac{ks}{2}} \left\||u_t(\tau,\cdot)|^q - |\bar{u}_t(\tau,\cdot)|^q\right\|_{L^{m_2} \cap H^{ks-1}} d\tau\\
    &\lesssim \int_0^t (1+t-\tau)^{-\frac{1}{2}(\frac{1}{m_2}-\frac{1}{2})-\frac{ks}{2}} \left\||u_t(\tau,\cdot)|^q - |\bar{u}_t(\tau,\cdot)|^q\right\|_{L^{m_2}} d\tau,
\end{align*}
where $k=0,1$.
Here, we used to the embedding
\begin{align*}
    \|\psi\|_{L^p} \lesssim \|\psi\|_{H^1_{m_1}} \text{ with } \frac{1}{m_1} \leq \frac{1}{p}+1
\end{align*}
and 
\begin{align*}
    \|\psi\|_{L^2} \lesssim \|\psi\|_{H^{1-ks}_{m_2}} \text{ with } \frac{1}{m_2} \leq \frac{3}{2}-ks. 
\end{align*}
Due to the assumption (\ref{condition1.1.2}), there always exist parameters $m_1$ with $1 < m_1 < \min\{p, q\}$ and $m_2$ with $1 < m_2 < \min\{2, p, q\}$
  satisfying the above conditions. Thanks to H\"older's inequality for $1/m_j = 1/q+ 1/\mu_j $ with $j = 1,2$, one has
  \begin{align*}
  \left\||u_t(\tau,\cdot)|^q - |\bar{u}_t(\tau,\cdot)|^q\right\|_{L^{m_j}} \leq \|u_t(\tau,\cdot)-\bar{u}_t(\tau,\cdot)\|_{L^q} \left(\|u_t(\tau,\cdot)\|_{L^{\mu_j(q-1)}}^{q-1} + \|\bar{u}_t(\tau,\cdot)\|_{L^{\mu_j(q-1)}}^{q-1}\right)
  \end{align*}
From the definition of the space $Y_{11}(T)$, we see that
\begin{align*}
    \|u_t(\tau,\cdot)-\bar{u}_t(\tau,\cdot)\|_{L^q}  \lesssim (1+\tau)^{-1+\frac{1}{2q}-\frac{1}{2p}+ [\gamma_1(p)]^+} \|u_t-\bar{u}_t\|_{Y_{11}(T)}.
\end{align*}
On the other hand, using Proposition \ref{fractionalGagliardoNirenberg}, we also obtain
\begin{align*}
    &\|u_t(\tau,\cdot)\|_{L^{\mu_j(q-1)}}^{q-1} + \|\bar{u}_t(\tau,\cdot)\|_{L^{\mu_j(q-1)}}^{q-1} \\
    &\hspace{2cm}\lesssim (1+\tau)^{-(q-1)-\frac{1}{2}(q-1-\frac{1}{\mu_j})+(q-1)[\gamma_1(p)]^+} \left(\|u_t\|_{X_{11}(T)}^{q-1} + \|\bar{u}_t\|_{X_{11}(T)}^{q-1}\right),
\end{align*}
provided that it is satisfied:
\begin{align*}
    \displaystyle\frac{\displaystyle\frac{1}{q}-\frac{1}{\mu_j(q-1)}}{\displaystyle\frac{1}{q}-\frac{1}{2}+s} \in [0, 1].
\end{align*}
This implies
\begin{align*}
    &\left\||u_t(\tau,\cdot)|^q - |\bar{u}_t(\tau,\cdot)|^q\right\|_{L^{m_j}}\\ &\hspace{1cm}\lesssim (1+\tau)^{-(q-1)-\frac{1}{2}(q+1+\frac{1}{p}-\frac{1}{m_j})+q[\gamma_1(p)]^+} \|u_t-\bar{u}_t\|_{Y_{11}(T)}\left(\|u_t\|_{X_{11}(T)}^{q-1} + \|\bar{u}_t\|_{X_{11}(T)}^{q-1}\right),
\end{align*}
with $j = 1,2$. From this, we obtain
\begin{align*}
    \|(v^{\rm non} -\bar{v}^{\rm non})(t,\cdot)\|_{L^p} &\lesssim (M_3(t) + M_4(t)) \|u_t-\bar{u}_t\|_{Y_{11}(T)}\left(\|u_t\|_{X_{11}(T)}^{q-1} + \|\bar{u}_t\|_{X_{11}(T)}^{q-1}\right),\\
    \|(v^{\rm non}-\bar{v}^{\rm non})(t,\cdot)\|_{\dot{H}^s} &\lesssim (M_5(t) + M_6(t)) \|u_t-\bar{u}_t\|_{Y_{11}(T)}\left(\|u_t\|_{X_{11}(T)}^{q-1} + \|\bar{u}_t\|_{X_{11}(T)}^{q-1}\right),
\end{align*}
where
\begin{align*}
    M_3(t) &:=  (1+t)^{-\frac{1}{2}(\frac{1}{m_1}-\frac{1}{p})}\int_0^{t/2} (1+\tau)^{-(q-1)-\frac{1}{2}(q+1+\frac{1}{p}-\frac{1}{m_1})+ q[\gamma_1(p)]^+} d\tau,\\
    M_4(t) &:= (1+t)^{-(q-1)-\frac{1}{2}(q+1+\frac{1}{p}-\frac{1}{m_1})+ q[\gamma_1(p)]^+} \int_{t/2}^t (1+t)^{-\frac{1}{2}(\frac{1}{m_1}-\frac{1}{p})} d\tau,\\
    M_5(t) &:= (1+t)^{-\frac{1}{2}(\frac{1}{m_2}-\frac{1}{2})-\frac{ks}{2}} \int_0^{t/2} (1+\tau)^{-(q-1)-\frac{1}{2}(q+1+\frac{1}{p}-\frac{1}{m_2})+ q[\gamma_1(p)]^+} d\tau,\\
    M_6(t) &:= (1+t)^{-(q-1)-\frac{1}{2}(q+1+\frac{1}{p}-\frac{1}{m_2})+ q[\gamma_1(p)]^+} \int_{t/2}^t (1+t-\tau)^{-\frac{1}{2}(\frac{1}{m_2}-\frac{1}{2})-\frac{ks}{2}} d\tau.
\end{align*}
For the integral $M_3(t)$, we consider two cases as follows:
\begin{itemize}[leftmargin=*]
    \item \textbf{Case 1:} If
    $$ -(q-1)-\frac{1}{2}\left(q+1+\frac{1}{p}-\frac{1}{m_1}\right)+ q[\gamma_1(p)]^+ \leq -1, $$
    we  immediately conclude $M_3(t) \lesssim 1$ for all $t > 0$, due to $m_1 < \min\{p, q\}$.
    \item \textbf{Case 2:} If
    $$ -(q-1)-\frac{1}{2}\left(q+1+\frac{1}{p}-\frac{1}{m_1}\right)+ q[\gamma_1(p)]^+ > -1, $$
    one has
    \begin{align*}
        M_3(t) \lesssim (1+t)^{1-q-\frac{1}{2}(q-1)+q[\gamma_1(p)]^+} \lesssim 1,
    \end{align*}
    where we note that the condition (\ref{condition1.1}) implies
    \begin{align*}
        -q-\frac{1}{2}(q-1)+ q[\gamma_1(p)]^+ < -1.
    \end{align*}
    
\end{itemize}
Therefore, we have $M_3(t) \lesssim 1$ for all $t > 0$. Similarly, we see that $$-\frac{1}{2}\left(\frac{1}{m_1}-\frac{1}{p}\right) > -1.$$ This leads to $M_4(t) \lesssim 1$. For these reasons, we can conclude that
    \begin{align}
        \|(v^{\rm non} -\bar{v}^{\rm non})(t,\cdot)\|_{L^p} \lesssim \|u_t-\bar{u}_t\|_{Y_{11}(T)}\left(\|u_t\|_{X_{11}(T)}^{q-1} + \|\bar{u}_t\|_{X_{11}(T)}^{q-1}\right). \label{Main.Es.6}
    \end{align}
For the integral $M_5(t)$, we consider two cases as follows:
\begin{itemize}[leftmargin=*]
    \item \textbf{Case 1:} If
    $$-(q-1)-\frac{1}{2}\left(q+1+\frac{1}{p}-\frac{1}{m_2}\right)+ q[\gamma_1(p)]^+ \leq -1, $$
    we immediately conclude
    \begin{align*}
        M_5(t) \lesssim (1+t)^{-\frac{1}{2}(\frac{1}{m_2}-\frac{1}{2})-\frac{ks}{2}} \lesssim (1+t)^{-\frac{1}{2}(\frac{1}{p}-\frac{1}{2}+ks)},
    \end{align*}
    due to $m_2 < \min\{2, p,q\}$.
    \vspace{0.5cm}
    \item \textbf{Case 2:} If 
    $$-(q-1)-\frac{1}{2}\left(q+1+\frac{1}{p}-\frac{1}{m_2}\right)+ q[\gamma_1(p)]^+ > -1, $$
    we see that
    \begin{align*}
        M_5(t) &\lesssim (1+t)^{-\frac{1}{2}(\frac{1}{p}-\frac{1}{2}+ks)+\left(1-q-\frac{1}{2}(q-1)+q[\gamma_1(p)]^+\right)}\\
        &\lesssim (1+t)^{-\frac{1}{2}(\frac{1}{p}-\frac{1}{2}+ks)}.
    \end{align*}
\end{itemize}
Similarly, we also conclude $M_6(t) \lesssim (1+t)^{-\frac{1}{2}(\frac{1}{p}-\frac{1}{2}+ks)}$ due to $$-\frac{1}{2}\left(\frac{1}{m_2}-\frac{1}{2}\right)-\frac{ks}{2} > -1.$$ This leads to
\begin{align}\label{Main.Es.7}
    \|(v^{\rm non}-\bar{v}^{\rm non})(t,\cdot)\|_{\dot{H}^{ks}}\lesssim (1+t)^{-\frac{1}{2}(\frac{1}{p}-\frac{1}{2}+ks)} \|u_t-\bar{u}_t\|_{Y_{11}(T)}\left(\|u_t\|_{X_{11}(T)}^{q-1} + \|\bar{u}_t\|_{X_{11}(T)}^{q-1}\right),
\end{align}
for $k = 0,1$.
The estimates (\ref{Main.Es.6})-(\ref{Main.Es.7}) imply (\ref{Esti.Pro2.2.2}). Thus, we immediately conclude Proposition \ref{proposition3.2}.
\end{proof}
\textbf{Proof of Theorem \ref{Theorem1}.}
Using Lemma \ref{LinearEstimates} and the definitions of $u^{\rm lin}, \, v^{\rm lin}$, we immediately have
\begin{align*}
    \|(\partial_t u^{\rm lin}, v^{\rm lin})\|_{X_1(T)} \lesssim \varepsilon\|(u_0, u_1, v_0, v_1)\|_{\mathcal{D}_{q,p}^1(s)}.
\end{align*}
For this reason, combining with Propositions \ref{proposition3.1} and \ref{proposition3.2}, we obtain the following two important estimates for all $(u,v)$ and $(\bar{u}, \bar{v}) \in Z_1(T) \times X_{12}(T)$:
    \begin{align}
        &\|(\partial_t \mathcal{N}_{11}[u],\, \mathcal{N}_{12}[v])\|_{X_1(T)} \leq C_1 \varepsilon \|(u_0, u_1)\|_{\mathcal{D}_{q,p}^1(s)} + C_1\|(u_t, v)\|_{X_1(T)}^p + C_1\|(u_t, v)\|_{X_1(T)}^q, \label{Es.Pro2.1}
        \end{align}
        and
        \begin{align}
        &\|(\partial_t \mathcal{N}_{11}[u],\, \mathcal{N}_{12}[v])- (\partial_t \mathcal{N}_{11}[\bar{u}], \mathcal{N}_{12}[\bar{v}])\|_{Y_1(T)}\notag\\ 
        &\quad\leq C_2 \|(u_t, v)-(\bar{u}_t, \bar{v})\|_{Y_1(T)}
        \notag\\ 
        &\hspace{2cm} \times \big(\|(u_t,v)\|_{X_1(T)}^{p-1} + \|(u_t, v)\|_{X_1(T)}^{q-1}+ \|(\bar{u}_t,\bar{v})\|_{X_1(T)}^{p-1} + \|(\bar{u}_t, \bar{v})\|_{X_1(T)}^{q-1}\big). \label{Es.Pro2.2}
    \end{align}
    Let us fix 
    \begin{align}\label{Cons1}
    M := 2C_1 \|(u_0, u_1)\|_{\mathcal{D}_{q,p}^1(s)}
    \end{align}
    and the parameter $\varepsilon_0$ satisfies
    \begin{align*}
        \max\{C_1, 2C_2\}\left(M^{p-1}\varepsilon_0^{p-1}+ M^{q-1} \varepsilon_0^{q-1}\right) \leq \frac{1}{2}.
    \end{align*}
   Next, taking the recurrence sequences $\{u_j\}_{j=-1}^\infty \subset Z_1(T)$ with $u_{-1} = 0;\,  u_{j} = \mathcal{N}_{11}[u_{j-1}]$ and $\{v_j\}_{j=-1}^\infty \subset X_{12}(T)$ with $v_{-1} = 0,\, v_j =\mathcal{N}_{12}[v_{j-1}]$ for $j = 0,1,2,...$, we employ the estimate (\ref{Es.Pro2.1}) to conclude that $$  \|(\partial_t u_j, v_j)\|_{X_1(T)} \leq M\varepsilon,$$ for all $j \in \mathbb{N}$ and $\varepsilon \in (0, \varepsilon_0]$. 
Moreover, using estimate (\ref{Es.Pro2.2}), it is a fact that 
\begin{align*}
    \|(\partial_t u_{j+1}, v_{j+1})-(\partial_t u_j, v_j)\|_{Y_1(T)}
    &\leq \frac{1}{2} \|(\partial_t u_{j}, v_j)-(\partial_t u_{j-1}, v_{j-1})\|_{Y_1(T)},
\end{align*}
so that $$\{(\partial_t u_j, v_j)\}_{j=-1}^{\infty} \subset X_1(T, M\varepsilon) \subset X_{11}(T, M\varepsilon) \times X_{12}(T, M\varepsilon)$$ is a Cauchy sequence in the Banach
space $Y_1(T)$. Furthermore, using Lemma \ref{lemma1.2}, there exists a function $(\varphi^*, v^*) \in X_{11}(T, M\varepsilon) \times X_{12}(T, M\varepsilon)$ satisfies
\begin{align*}
     (\partial_t u_j, v_j) \to (\varphi^*, v^*)   \text{ in } X_{11}(T, M\varepsilon) \times X_{12}(T, M\varepsilon),
\end{align*}
as $j \to \infty$, with the  metric $Y_1(T)$.
Next, we consider the following function:
\begin{align*}
    u^{*}(t,x) = \int_0^t \varphi^*(\tau,x) d\tau + u_0(x).
\end{align*}
Additionally, one has
\begin{align*}
    \lim_{t \to t_0} \|u^*(t,\cdot)-u^*(t_0,\cdot)\|_{L^2} \lesssim \lim_{t \to t_0} \left|\int_{t_0}^t \|\varphi^*(\tau,\cdot)\|_{X_{11}(T)} d\tau\right| \leq M\varepsilon\lim_{t \to t_0}  |t-t_0| = 0,
\end{align*}
for all $t_0 \in [0,T]$. From this, we obtain
\begin{align*}
    u^* \in \mathcal{C}([0, T], L^2).
\end{align*}
Moreover, thanks to again the estimate (\ref{Es.Pro2.2}), it holds 
\begin{align*}
    &\|(\partial_t u_{j+1}, v_{j+1}) - (\partial_t \mathcal{N}_{11}[u^*], \mathcal{N}_{12}[v^*])\|_{Y_1(T)} \\
    &\quad\leq C_2\|(\partial_t u_j, v_j) - (\varphi^*, v^*)\|_{Y_1(T)}\\
    &\qquad\qquad\times\left(\|(\partial_t u_j, v_j)\|_{X_1(T)}^{p-1}+\|(\varphi^*, v^*)\|_{X_1(T)}^{p-1} + \|(\partial_t u_j, v_j)\|_{X_1(T)}^{q-1}+\|(\varphi^*, v^*)\|_{X_1(T)}^{q-1}\right)\\
    &\quad\leq \frac{1}{2} \|(\partial_t u_j, v_j) - (\varphi^*, v^*)\|_{Y_1(T)},
\end{align*}
that is, 
\begin{align*}
    (\partial_t u_j, v_j) \to (\partial_t \mathcal{N}_{11}[u^*], \mathcal{N}_{12}[v^*]) \quad\text{ as } \quad j \to \infty.
\end{align*}
Therefore, it can immediately yield
\begin{align*}
    (\varphi^*, v^*) \equiv (\partial_t\mathcal{N}_{11}[u^*], \mathcal{N}_{12}[v^*]),
\end{align*}
that is, there exists a function $\gamma(x)$ satisfies
\begin{align}\label{Eq1}
    u^*(t,x) = \mathcal{N}_{11}[u^*](t,x) + \gamma(x),
\end{align}
for all $t \geq 0, x 
\in \mathbb{R}^n$.
 Taking $t=0$ in (\ref{Eq1}), we can see that $\gamma(x) \equiv 0$ on $\mathbb{R}^n$ due to $u^*(0,x) = u_0(x)$. For this reason, we have a solution $$(u^*, v^*) = (\mathcal{N}_{11}[u^*], \mathcal{N}_{12}[v^*]) \quad\text{ in }\quad \mathcal{C}([0,T], L^2) \times X_{12}(T, M\varepsilon), $$ for all $T >0$. Since $T$ is arbitrary, the solution is global, that is, $$(u^*, v^*) \in \mathcal{C}([0, \infty), L^2) \times X_{12}(\infty, M\varepsilon).$$
 Next, we need to show that $(u^*, v^*) \in \mathcal{C}^1([0,\infty), H^s \cap L^{q}) \times \mathcal{C}([0, \infty), H^s \cap L^p)$, that is, $(\varphi^*, v^*) \in \mathcal{C}([0,\infty), H^s \cap L^{q}) \times \mathcal{C}([0, \infty), H^s \cap L^p) $. To prove this property, we recall that
 \begin{align*}
     \varphi^*(t,x) = \partial_t \mathcal{N}_{11}[u^*](t,x) &= \partial_t u^{\rm lin}(t,x) + \partial_t \int_0^t \mathcal{K}(t-\tau,x) \ast_x |v^*(\tau,x)|^p d\tau\\
     &= \partial_t u^{\rm lin}(t,x) +  \int_0^t \partial_t \mathcal{K}(t-\tau,x) \ast_x |v^*(\tau,x)|^p d\tau,\\
     v^*(t,x) =  \mathcal{N}_{12}[v^*](t,x) &=  v^{\rm lin}(t,x) +  \int_0^t \mathcal{K}(t-\tau,x) \ast_x |\partial_t u^*(\tau,x)|^q d\tau\\
     &= v^{\rm lin}(t,x) +  \int_0^t \mathcal{K}(t-\tau,x) \ast_x |\varphi^*(\tau,x)|^q d\tau.
 \end{align*}
 Since the linear parts obviously satisfy continuity, it suffices to show that
\begin{align}
    \int_0^t \partial_t \mathcal{K}(t-\tau,x) \ast_x |v^*(\tau,x)|^p d\tau \in \mathcal{C}([0,\infty), H^s \cap L^{q}), \label{EQ2}\\
    \int_0^t  \mathcal{K}(t-\tau,x) \ast_x |\varphi^*(\tau,x)|^q d\tau \in \mathcal{C}([0,\infty), H^s \cap L^{p}) .
    \label{EQ3}
\end{align}
Using again Lemmas \ref{LinearEstimates} and \ref{Lemma2.3}, we have the following estimates:
\begin{align*}
    \| \mathcal{K}(t-\tau,x) \ast_x |\varphi^*(\tau,x)|^q\|_{L^{p}} &\lesssim (1+\tau)^{-q-\frac{n}{2}(q-\frac{1}{p})+q[\gamma_1(p)]^+} \|\varphi^*\|_{X_{11}(\infty)}^q,\\
    \| \mathcal{K}(t-\tau,x) \ast_x |\varphi^*(\tau,x)|^q\|_{\dot{H}^{ks}} &\lesssim (1+\tau)^{-q-\frac{n}{2}(q-\frac{1}{2})-\frac{ks}{2}} \|\varphi^*\|_{X_{11}(\infty)}^q
    \end{align*}
    and
    \begin{align*}
    \|\partial_t \mathcal{K}(t-\tau,x) \ast_x |v^*(\tau,x)|^p\|_{L^q} &\lesssim (1+\tau)^{-\frac{1}{2}(p-\frac{1}{2})} \|v^*\|_{X_{12}(\infty)}^p,\\
     \|\partial_t \mathcal{K}(t-\tau,x) \ast_x |v^*(\tau,x)|^p\|_{\dot{H}^{ks}} &\lesssim (1+\tau)^{-\frac{1}{2}(p-\frac{1}{2})-\frac{ks}{2}+\varepsilon_1} \|v^*\|_{X_{12}(\infty)}^p
\end{align*}
where $k=0,1$ and $\varepsilon_1$ is an arbitrarily small positive constant. Therefore, the Lebesgue convergence theorem in the Bochner integral immediately implies (\ref{EQ2}) and (\ref{EQ3}).

 Finally, we establish the uniqueness of the global solution 
$$(u^*, v^*) \in \left(\mathcal{C}([0, \infty), L^2) \cap \mathcal{C}^1([0, \infty), H^s \cap L^q)\right) \times \mathcal{C}([0, \infty), H^s \cap L^p).$$ Let $(u^*, v^*)$ and $(\bar{u}^*, \bar{v}^*)$ are solutions of (\ref{Main.Eq.1}) belonging to this space. One can easily see that there exists a constant $M(T)$ such that $$\|(\partial_t u^*, v^*)\|_{X_1(T)}^{p-1}+ \|(\partial_t u^*, v^*)\|_{X_1(T)}^{q-1} + \|(\partial_t \bar{u}^*, \bar{v}^*)\|_{X_1(T)}^{p-1}+ \|(\partial_t \bar{u}^*, \bar{v}^*)\|_{X_1(T)}^{q-1} \leq M(T).$$ Performing the same proof steps of Proposition \ref{proposition3.2} we obtain
 \begin{align*}
      \|(\partial_t u^*, v^*)-(\partial_t \bar{u}^*, \bar{v}^*)\|_{Y_1(t)} \lesssim M(T) \int_0^t \|(\partial_t u^*, v^*)-(\partial_t \bar{u}^*, \bar{v}^*)\|_{Y_1(\tau)}  d\tau,
 \end{align*}
 for all $t \in [0, T]$.
From this, the
Gronwall inequality implies $ (\partial_t u^*, v^*) \equiv (\partial_t \bar{u}^*, \bar{v}^*)$ on $[0,T]$. Because $T $ is an arbitrary positive number, $(\partial_t u^*, v^*) \equiv (\partial_t \bar{u}^*, \bar{v}^*)$ on $[0, \infty)$. We note that $u^*(0,x) = \bar{u}^*(0,x) = u_0(x)$, this implies $(u^*, v^*) \equiv (\bar{u}^*, \bar{v}^*)$ on $[0, \infty)$.
 Hence, the proof of Theorem \ref{Theorem1} is completed.
   
%.........................................................
\subsection{Proof of Theorem \ref{Theorem3}}
To begin with, the result related to the linear estimates in the two-dimensional space is restated in the following lemma. 
\begin{lemma}[Linear estimates in 2D]\label{LinearEstimates_2D}
    Let $n=2$, $1 \leq \rho_2 \leq \rho_1 < \infty$ with $\rho_1 \ne 1$ and $0 \leq d_1 \leq d_2$.  Then, the following estimates hold for all $t > 0$ (see Theorem 1.1 in \cite{Ikeda2019}):
    \begin{align*}
        &\| |\nabla|^{d_1} \mathcal{K}(t,x)\ast_x \varphi(x)\|_{L^{\rho_1}} \\
        &\qquad\lesssim (1+t)^{-(\frac{1}{\rho_2}-\frac{1}{\rho_1})-\frac{d_1-d_2}{2}} \| |\nabla|^{d_2} \chi_{\rm L}(|\nabla|) \varphi\|_{L^{\rho_2}} + e^{-ct} \| |\nabla|^{d_1} \chi_{\rm H}(|\nabla|) \varphi\|_{H_{\rho_1}^{\sigma(\rho_1)-1}},\\ 
        &\| |\nabla|^{d_1} \partial_t \mathcal{K}(t,x)\ast_x \varphi(x)\|_{L^{\rho_1}}\\
        &\qquad\lesssim (1+t)^{-(\frac{1}{\rho_2}-\frac{1}{\rho_1})-\frac{d_1-d_2}{2}-1} \| |\nabla|^{d_2} \chi_{\rm L}(|\nabla|) \varphi\|_{L^{\rho_2}} + e^{-ct} \| |\nabla|^{d_1} \chi_{\rm H}(|\nabla|) \varphi\|_{H_{\rho_1}^{\sigma(\rho_1)}},
    \end{align*}
    where $c$ is a suitable positive constant. Moreover, we also obtain the following estimates (see Propositions 4.1 and 4.2 in \cite{DabbiccoEbert2017}):
    \begin{align*}
        \||\nabla|^{d_1} \partial_t^2 \mathcal{K}(t,x) \ast_{x} \varphi(x) \|_{L^{\rho_1}} \lesssim (1+t)^{-(\frac{1}{\rho_2}-\frac{1}{\rho_1})-\frac{d_1}{2}-2} \|\chi_{\rm L}(|\nabla|)\varphi\|_{L^{\rho_2}} + e^{-ct} \|\chi_{\rm H}(|\nabla|)\varphi\|_{H_{\rho_1}^{\delta(\rho_1)+d_1+1}},  
    \end{align*}
    where
    \begin{align*}
    \sigma(\rho_1) := \left|\frac{1}{\rho_1}-\frac{1}{2}\right| \text{ and } \quad \delta(\rho_1) := 2\left|\frac{1}{\rho_1}-\frac{1}{2}\right|.
\end{align*}
\end{lemma}
\begin{remark}
    \fontshape{n}
\selectfont
As in Lemma \ref{LinearEstimates}, the third estimate in Lemma \ref{LinearEstimates_2D} is used solely to determine the data space required to obtain the result in Theorem \ref{Theorem3}. The first and second estimates will be used frequently in our computations.
\end{remark}

Under the assumptions of Theorem \ref{Theorem3},  we define the following function spaces:
\begin{align*}
    X_{21}(T) :=   L^\infty([0, T], H^{s} \cap L^\alpha), \quad X_{22}(T) := L^\infty([0,T], H^s \cap L^p)
\end{align*}
and 
\begin{align*}
    X_2(T) := X_{21}(T) \times X_{22}(T)
\end{align*}
with the norms
\begin{align*}
    \|\varphi_1\|_{X_{21}(T)} := &\sup _{t \in [0,T]} \bigg\{(1+t)^{2-\frac{1}{\alpha}-[\gamma_2(p)]^+}\|\varphi_1(t,\cdot)\|_{L^\alpha}\\
    &\quad\quad+ (1+t)^{\frac{3}{2}-[\gamma_2(p)]^+} \|\varphi_1(t,\cdot)\|_{L^2}+ (1+t)^{\frac{3}{2}+\frac{s}{2}-[\gamma_2(p)]^+}\|\varphi_1(t,\cdot)\|_{\dot{H}^{s}}\bigg\},\\
    \|\varphi_2\|_{X_{22}(T)} := &\sup_{t \in [0,T]} \left\{(1+t)^{1-\frac{1}{p}}\|\varphi_2(t, \cdot)\|_{L^p} + (1+t)^{\frac{1}{2}} \|\varphi_2(t,\cdot)\|_{L^2} +(1+t)^{\frac{1}{2}+\frac{s}{2}}  \|\varphi_2(t,\cdot)\|_{\dot{H}^{s}}\right\}
\end{align*}
and
\begin{align*}
    \|(\varphi_1, \varphi_2)\|_{X_2(T)}:= \|\varphi_1\|_{X_{21}(T) } +\|\varphi_2\|_{X_{22}(T)},
\end{align*}
respectively. Moreover, for $\beta := \max\{2,p\}$ and $\kappa$ is an arbitrarily small positive constant, we also define the spaces
\begin{align*}
    Y_{21}(T) := L^\infty([0, T], L^2), \quad  Y_{22}(T) := L^\infty([0, T], H^{1-\kappa} \cap L^\beta)
\end{align*}
and
\begin{align*}
    Y_2(T) := Y_{21}(T) \times Y_{22}(T),
\end{align*}
with the norms
\begin{align*}
    \|\varphi_1\|_{Y_{21}(T)} &:= \sup_{t \in [0,T]} \bigg\{(1+t)^{\frac{1}{2}+\frac{1}{\beta}-[\gamma_2(p)]^+}\|\varphi_1(t,\cdot)\|_{L^2}\bigg\}\\
    \|\varphi_2\|_{Y_{22}(T)} &:= \sup_{t \in [0,T]}\bigg\{\|\varphi_2(t, \cdot)\|_{L^\beta} + (1+t)^{\frac{1}{\beta}-\frac{1}{2}} \|\varphi_2(t,\cdot)\|_{L^2}+ (1+t)^{\frac{1}{\beta}-\frac{1}{2} + \frac{1-\kappa}{2}}\|\varphi_2(t,\cdot)\|_{\dot{H}^{1-\kappa}}\bigg\}
\end{align*}
and
\begin{align*}
    \|(\varphi_1, \varphi_2)\|_{Y_2(T)} := \|\varphi_1\|_{Y_{21}(T)} + \|\varphi_2\|_{Y_{22}(T)},
\end{align*}
respectively. In addition, we note that
\begin{align*}
    Z_2(T) := \{\varphi \text{ is a first-order differentiable (in time) function on $[0,T]$ and } \varphi_t \in X_{21}(T)\}
\end{align*}
and
\begin{align*}
X_{21}(T, M) &:= \big\{\varphi_k \in X_{21}(T), \|\varphi_1\|_{X_{21}(T)} \leq M\big\},\\
    X_{22}(T, M) &:= \big\{\varphi_k \in X_{22}(T), \|\varphi_2\|_{X_{22}(T)} \leq M\big\},\\
    X_2(T,M) &:= \big\{(\varphi_1, \varphi_2) \in X_2(T), \|(\varphi_1, \varphi_2)\|_{X_2(T)} \leq M \},
\end{align*}
for all $T, M > 0$. We can easily see that $X_{21}(T) \subset Y_{21}(T)$ and $X_{22}(T) \subset Y_{22}(T)$ thanks to Proposition \ref{fractionalGagliardoNirenberg} and $s > 1$.
Similarly to Lemma \ref{lemma1.2}, we have the following important property.
\begin{lemma}\label{lemma2.5}
   According to the assumptions of Theorem \ref{Theorem3}, $X_{21}(T, M)$  is a closed subset of $Y_{21}(T)$ with respect to the metric $Y_{21}(T)$. Moreover, $X_{22}(T, M)$  is also a closed subset of $Y_{22}(T)$ with respect to the metric $Y_{22}(T)$. 
\end{lemma}
Next, we consider the operators $\mathcal{N}_{21}$ and $\mathcal{N}_{22}$ on the spaces $Z_2(T)$ and $X_{22}(T)$, respectively, as follows:
\begin{align*}
    \mathcal{N}_{21}: u \in Z_2(T) \to \mathcal{N}_{21}[u] := u^{\rm lin} + u^{\rm non}, \quad  \mathcal{N}_{22}: v\in X_{22}(T) \to \mathcal{N}_{22}[v] := v^{\rm lin} + v^{\rm non}.
\end{align*}
In order to prove Theorem \ref{Theorem3}, we will show a pair $(u^*, v^*)$ that satisfies the following integral equation
\begin{align*}
    (u^*, v^*) = (\mathcal{N}_{21}[u^*], \mathcal{N}_{22}[v^*]),
\end{align*}
for all $T > 0$.
Specifically, we are going to proof the following lemmas and propositions.
\begin{lemma}\label{lemma2.4}
   Under the assumptions of Theorem \ref{Theorem3}, the following estimates hold for all $(u,v) \in Z_2(T) \times X_{22}(T)$: 
   \begin{align}
       \||v(\tau,\cdot)|^p\|_{L^{\nu}} &\lesssim (1+\tau)^{-p +\frac{1}{\nu}} \|v\|_{X_{22}(T)}^p,\label{Es.of.Lemma2.4.2}\\
       \||v(\tau,\cdot)|^p\|_{\dot{H}_{q}^{\sigma(q)}} &\lesssim (1+\tau)^{-p+\frac{1}{q}-\frac{\sigma(q)}{2}} \|v\|_{X_{22}(T)}^p \quad\text{ with } q \in (1,2), \label{Es.of.Lemma2.4.3}\\
       \||v(\tau,\cdot)|^p\|_{\dot{H}^{s}} &\lesssim (1+\tau)^{-p+\frac{1}{2}-\frac{s}{2}+\varepsilon_2} \|v\|_{X_{22}(T)}^p, \label{Es.of.Lemma2.4.7} 
   \end{align}
where $\nu \geq 1$, $\sigma(q) := 1/q-1/2$ and $\varepsilon_2$ is an arbitrarily small positive constant. Moreover, we also obtain the following estimates:
\begin{align}
    \||u_t(\tau,\cdot)|^q\|_{L^{\nu}} &\lesssim (1+\tau)^{-2q+\frac{1}{\nu}+q [\gamma_2(p)]^+} \|u_t\|_{X_{21}(T)}^q, \label{Es.of.Lemma2.4.5}\\
    \||u_t(\tau,\cdot)|^q\|_{\dot{H}^{s-1}} &\lesssim (1+\tau)^{-2q+\frac{1}{2}  -\frac{s-1}{2}+ q[\gamma_2(p)]^+} \|u_t\|_{X_{21}(T)}^q. \label{Es.of.Lemma2.4.6}
\end{align}
\end{lemma}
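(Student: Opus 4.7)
The plan is to adapt the arguments of the 1D estimates \eqref{es3.1}--\eqref{es3.3} by combining the fractional Gagliardo--Nirenberg inequality (Proposition \ref{fractionalGagliardoNirenberg}) with a Moser-type fractional chain rule (Proposition \ref{FractionalPowers}) and the Sobolev embedding (Proposition \ref{Embedding}). Throughout, the $X_2(t)$-norm supplies the decay rates $\|v(\tau,\cdot)\|_{L^p}\lesssim (1+\tau)^{-1+1/p}$, $\|v(\tau,\cdot)\|_{\dot H^{s}}\lesssim (1+\tau)^{-1/2-s/2}$, and the analogous $L^\alpha,L^2,\dot H^s$-rates for $u_t$ (each carrying the additive loss $[\gamma_2(p)]^+$); every estimate in the lemma is then obtained by interpolating the quantity $|v|^p$ (resp.\ $|u_t|^q$) against these rates.

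For the pure Lebesgue estimates \eqref{Es.of.Lemma2.4.2} and \eqref{Es.of.Lemma2.4.5} I would write $\||w|^r\|_{L^\nu}=\|w\|_{L^{\nu r}}^r$ and apply Gagliardo--Nirenberg between the lowest Lebesgue level available in $X_2(t)$ (namely $L^p$ for $v$ and $L^\alpha$ for $u_t$) and $\dot H^{s}$. For the $v$-case the interpolation exponent is
\[
\theta=\frac{(1-1/\nu)/p}{1/p+(s-1)/2}\in[0,1],
\]
admissible precisely because $s>1$; a routine computation shows that the combined decay rate telescopes to exactly $-p+1/\nu$. The $u_t$-case is identical, with the extra $q[\gamma_2(p)]^+$ loss inherited linearly from the $u_t$-rates in $X_2(t)$.

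Estimates \eqref{Es.of.Lemma2.4.3} and \eqref{Es.of.Lemma2.4.6} use the fractional chain rule at an intermediate smoothness order. I apply Proposition \ref{FractionalPowers} to split
\[
\||w|^r\|_{\dot H_{h}^{\beta}}\lesssim \|w\|_{L^{r_1}}^{r-1}\,\|w\|_{\dot H_{r_2}^{\beta}},\qquad \tfrac{1}{h}=\tfrac{r-1}{r_1}+\tfrac{1}{r_2},
\]
and then dominate each factor via Gagliardo--Nirenberg against the $X_2(t)$-rates. A convenient choice is $r_1=r_2=pq$ in \eqref{Es.of.Lemma2.4.3} (the Hölder balance is automatic), after which the same telescoping algebra as in the pure Lebesgue case yields the target exponent $-p+1/q-\sigma(q)/2$. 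The estimate \eqref{Es.of.Lemma2.4.6} is handled in the same way with $|u_t|^q$ in place of $|v|^p$, producing the additional $q[\gamma_2(p)]^+$ loss.

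The most delicate piece is \eqref{Es.of.Lemma2.4.7}, where the $\varepsilon_2$-loss must be generated. Following the 1D template, I use Propositions \ref{FractionalPowers} and \ref{Embedding} to obtain
\[
\||v|^p\|_{\dot H^{s}}\lesssim \|v\|_{\dot H^{s}}\bigl(\|v\|_{\dot H^{s^*}}+\|v\|_{\dot H^{s}}\bigr)^{p-1},
\]
with $s^*<s$ chosen just below the 2D Sobolev threshold $n/2=1$; explicitly $s^*=1-2\varepsilon_2$. Gagliardo--Nirenberg between $L^2$ and $\dot H^{s}$ then yields
\[
\|v\|_{\dot H^{s^*}}\lesssim (1+\tau)^{-1/2-s^*/2}=(1+\tau)^{-1+\varepsilon_2},
\]
which dominates the $\dot H^{s}$-decay for large $\tau$; combined with $\|v\|_{\dot H^{s}}\lesssim (1+\tau)^{-1/2-s/2}$ this produces precisely the target rate $-p+1/2-s/2+\varepsilon_2$. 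The main obstacle will be justifying the Moser-type rule for non-integer power $p$ at smoothness order $s\in(1,2)$: this is exactly the content of the hypothesis $s<\min\{2,p\}$ in Theorem \ref{Theorem3}, while the lower bound $s>1$ is equally essential both for the embedding $\dot H^{s}\hookrightarrow L^\infty$ that underlies \eqref{Es.of.Lemma2.4.7} and for keeping the Gagliardo--Nirenberg exponents in $[0,1]$ throughout.
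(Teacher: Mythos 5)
Your overall strategy is the same as the paper's: Gagliardo--Nirenberg interpolation against the $X_2(t)$-rates for the Lebesgue estimates, the fractional chain rule for the $\dot H^{\sigma(q)}_q$ and $\dot H^{s-1}$ estimates, and Propositions \ref{FractionalPowers} and \ref{Embedding} with a near-threshold auxiliary order for \eqref{Es.of.Lemma2.4.7}; the telescoping algebra you describe is exactly what the paper carries out. (A small slip: the product inequality $\||w|^r\|_{\dot H^\beta_h}\lesssim\|w\|_{L^{r_1}}^{r-1}\|w\|_{\dot H^\beta_{r_2}}$ is Proposition \ref{chainrule}, not Proposition \ref{FractionalPowers}, which is the $L^\infty$-based Moser estimate.)

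However, your choice $r_1=r_2=pq$ in the proof of \eqref{Es.of.Lemma2.4.3} does not work in general. The H\"older balance $\frac{1}{q}=\frac{p-1}{pq}+\frac{1}{pq}$ is indeed automatic, but the second factor $\|v\|_{\dot H^{\sigma(q)}_{pq}}$ must then be controlled by Gagliardo--Nirenberg between $L^{p_0}$ (with $p_0\in\{2,p\}$) and $\dot H^{s}$, and the admissibility constraint $\omega\le 1$ reads, independently of $p_0$ since the $1/p_0$ terms cancel,
\[
s\;\ge\;1-\frac{2}{pq}+\sigma(q)\;=\;\frac12+\frac1q-\frac{2}{pq}.
\]
The right-hand side exceeds $1$ whenever $q<2-\frac4p$, so for instance $p=8$, $q=1.2$, $s=1.05$ satisfies all hypotheses of Theorem \ref{Theorem3} (namely $pq=9.6>2$ and $1<s<2$) yet violates this constraint: the interpolation exponent exceeds $1$ and the step fails. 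The paper avoids this by putting the fractional derivative on an $L^2$-based factor: it takes $r_2=2$, so that $\|v\|_{\dot H^{\sigma(q)}_{2}}$ is handled with exponent $\omega=\sigma(q)/s<1$ automatically, and shifts all the H\"older weight onto $r_1=(p-1)/\bigl(\tfrac1q-\tfrac12\bigr)>2$, which is exactly where $pq>2$ enters. With that choice your telescoping computation goes through verbatim and yields the stated rate $-p+\frac1q-\frac{\sigma(q)}{2}$. The remaining items \eqref{Es.of.Lemma2.4.2}, \eqref{Es.of.Lemma2.4.5}, \eqref{Es.of.Lemma2.4.7} and \eqref{Es.of.Lemma2.4.6} are fine as you describe them; for \eqref{Es.of.Lemma2.4.6} the analogous equal-exponent choice happens to be admissible, though the paper instead uses $1/r_4=1/2-\varepsilon_2$ and $1/r_3=\varepsilon_2/(q-1)$.
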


\begin{proof}
First, thanks to Proposition \ref{fractionalGagliardoNirenberg}, one has
\begin{align*}
    \||v(\tau,\cdot)|^p\|_{L^\nu} = \|v(\tau,\cdot)\|_{L^{p\nu}}^p &\lesssim \|v(\tau,\cdot)\|_{L^p}^{p(1-\omega_1)} \|v(\tau,\cdot)\|_{\dot{H}^{s}}^{p\omega_1}\lesssim (1+\tau)^{-p+\frac{1}{\nu}} \|v\|_{X_{22}(T)}^p,
    \end{align*}
    where
\begin{align*}
    \omega_1 = \frac{\frac{1}{p}-\frac{1}{p\nu}}{\frac{1}{p}-\frac{1}{2}+\frac{s}{2}}\in [0, 1].
\end{align*}
   Next, we can easily calculate
    \begin{align*}
    \||u_t(\tau,\cdot)|^q\|_{L^\nu} = \|u_t(\tau,\cdot)\|_{L^{q\nu}}^q &\lesssim \|u_t(\tau,\cdot)\|_{L^\alpha}^{q(1-\omega_2)} \|u_t(\tau,\cdot)\|_{\dot{H}^{s}}^{q \omega_2}\\
    &\lesssim (1+\tau)^{-2q+\frac{1}{\nu}+q[\gamma_2(p)]^+} \|u_t\|_{X_{21}(T)}^q,
\end{align*}
where
\begin{align*}
    \omega_2 = \frac{\frac{1}{\alpha}-\frac{1}{q\nu}}{\frac{1}{\alpha}-\frac{1}{2}+\frac{s}{2}} \in [0, 1].
\end{align*}
These lead to the estimates (\ref{Es.of.Lemma2.4.2}) and (\ref{Es.of.Lemma2.4.5}).
Now, we are going to prove (\ref{Es.of.Lemma2.4.7}). Observing that $1 < s < \min\{2,p\}$ and fixing $\varepsilon_2' = 2\varepsilon_2/(p-1)$, one employs Proposition \ref{FractionalPowers} and \ref{Embedding} to derive
\begin{align*}
    \||v(\tau,\cdot)|^p\|_{\dot{H}^{s}} &\lesssim \|v(\tau,\cdot)\|_{\dot{H}^{s}} \left(\|v(\tau,\cdot)\|_{\dot{H}^{1-\varepsilon_2'}} + \|v(\tau,\cdot)\|_{\dot{H}^{s}}\right)^{p-1}\\
    &\lesssim (1+\tau)^{-\frac{1}{2}-\frac{s}{2}-\frac{p-1}{2}-\frac{(p-1)(1-\varepsilon_2')}{2}}\|v\|_{X_{22}(T)}^p\\
    &\lesssim (1+\tau)^{-p+\frac{1}{2}-\frac{s}{2}+\varepsilon_2} \|v\|_{X_{22}(T)}^p.
    \end{align*}
    Next, we will prove the estimates (\ref{Es.of.Lemma2.4.3}). Applying Proposition \ref{chainrule} with $1/q = (p-1)/r_1 + 1/r_2$, we arrive at
    \begin{align*}
        \||v(\tau,\cdot)|^p\|_{\dot{H}_{q}^{\sigma(q)}} &\lesssim \|v(\tau,\cdot)\|_{L^{r_1}}^{p-1} \|v(\tau,\cdot)\|_{\dot{H}^{\sigma(q)}_{r_2}}.
    \end{align*}
    Using again Proposition \ref{fractionalGagliardoNirenberg}, we obtain
    \begin{align*}
        \|v(\tau,\cdot)\|_{L^{r_1}} &\lesssim \|v(\tau,\cdot)\|_{L^2}^{1-\omega_3}\|v(\tau,\cdot)\|_{\dot{H}^{s}}^{\omega_3} \lesssim (1+\tau)^{-1+\frac{1}{r_1}} \|v\|_{X_{22}(T)},\\
        \|v(\tau,\cdot)\|_{\dot{H}^{\sigma(q)}_{r_2}} &\lesssim \|v(\tau,\cdot)\|_{L^2}^{1-\omega_4} \|v(\tau,\cdot)\|_{\dot{H}^{s}}^{\omega_4} \lesssim (1+\tau)^{-1+\frac{1}{r_2}-\frac{\sigma(q)}{2}}\|v\|_{X_{22}(T)}.
    \end{align*}
    Combining the three estimates above, we derive the estimates (\ref{Es.of.Lemma2.4.3}), with the conditions as follow:
    \begin{align*}
        \omega_3 := \frac{2}{s}\left(\frac{1}{2}-\frac{1}{r_1}\right) \in [0, 1],\\
        \omega_4 := \frac{2}{s}\left(\frac{1}{2}-\frac{1}{r_2}+\frac{\sigma(q)}{2}\right) \in \left[\frac{\sigma(q)}{s}, 1\right]
    \end{align*}
    and
    \begin{align*}
        \frac{1}{q} = \frac{p-1}{r_1} + \frac{1}{r_2}.
    \end{align*}
      For these reasons, we choose $r_2 = 2$ and $$r_1 =  \frac{p-1}{\frac{1}{q}-\frac{1}{2}} > 2 \qquad \text{ due to } \qquad pq > 2,\, q \in (1, 2). $$
In a similar manner, one obtains the estimate (\ref{Es.of.Lemma2.4.6}) with $s \in (1,2)$ as follows:
\begin{align*}
    \||u_t(\tau,\cdot)|^q\|_{\dot{H}^{s-1}} &\lesssim \|u_t(\tau,\cdot)\|_{L^{r_3}}^{q-1} \|u_t(\tau,\cdot)\|_{\dot{H}^{s-1}_{r_4}}\\
    &\lesssim \|u_t(\tau,\cdot)\|_{L^{\alpha}}^{(q-1)(1-\omega_5)+1-\omega_6} \|u_t(\tau,\cdot)\|_{\dot{H}^{s}}^{(q-1)\omega_5+\omega_6}\\
    &\lesssim (1+\tau)^{-2q+\frac{1}{2}-\frac{s-1}{2}+q[\gamma_2(p)]^+} \|u_t\|_{X_{21}(T)}^q,
\end{align*}
provided that there exist parameters $r_3, r_4$ satisfying
$$
     \omega_5 := \frac{\frac{1}{\alpha}-\frac{1}{r_3}}{\frac{1}{\alpha}-\frac{1}{2}+\frac{s}{2}} \in [0, 1], \quad
     \omega_6 := \frac{\frac{1}{\alpha}-\frac{1}{r_4}+\frac{s-1}{2}}{\frac{1}{\alpha}-\frac{1}{2}+\frac{s}{2}} \in \left[\frac{s-1}{s}, 1\right] \quad\text{and}\quad
    \frac{1}{2} =\frac{q-1}{r_3} +\frac{1}{r_4}.
$$
From these, we can choose
\begin{align*}
    \frac{1}{r_4} = \frac{1}{2}-\varepsilon_2 &\text{ and } \frac{1}{r_3} = \frac{\varepsilon_2}{q-1}.
\end{align*}
In summary, we
have completed the proof of Lemma \ref{lemma2.4}.
\end{proof}

\begin{proposition}\label{Pro2.3}
    Under the assumptions of Theorem \ref{Theorem3}, the following estimates hold for all $(u,v) \in Z_2(T) \times X_{22}(T)$:
    \begin{align}
        \|\partial_t u^{\rm non}\|_{X_{21}(T)} &\lesssim \|v\|_{X_{22}(T)}^p, \label{Esti.Pro2.3.1}\\
        \|v^{\rm non}\|_{X_{22}(T)} &\lesssim \|u_t\|_{X_{21}(T)}^q. \label{Esti.Pro2.3.2}
    \end{align}
\end{proposition}

\begin{proof}
\textbf{First, we prove the estimate (\ref{Esti.Pro2.3.1})}. We fix $\mu \in \{2, \alpha\}$. Using Lemma \ref{lemma2.4} and Lemma \ref{LinearEstimates_2D} with $\rho_2 = 1,\, \rho_1 = \eta, (d_1, d_2) =(0,0)$ for $\tau \in [0, t/2]$ and $\rho_1 = \rho_2=\eta, (d_1, d_2) = (0,0)$,  we obtain the following estimates:
        \begin{align}
        \|\partial_t u^{\rm non}(\tau,\cdot)\|_{L^{\eta}} &\lesssim \int_0^{t/2} (1+t-\tau)^{-2+\frac{1}{\eta}} \||v(\tau,\cdot)|^p\|_{L^1 \cap L^{\eta} \cap \dot{H}^{\sigma(\eta) }_{\eta}} d\tau \notag\\
        &\hspace{2cm}+ \int_{t/2}^t (1+t-\tau)^{-1} \||v(\tau,\cdot)|^p\|_{L^{\eta} \cap \dot{H}^{\sigma(\eta)}_{\eta}} d\tau\notag\\
        &\lesssim \bigg((1+t)^{-2+\frac{1}{\eta}}\int_0^{t/2}(1+\tau)^{-p+1} d\tau + (1+t)^{-p+\frac{1}{\eta}} \int_{t/2}^t (1+t-\tau)^{-1} d\tau \bigg) \|v\|_{X_{22}(T)}^p\notag\\
        &\lesssim (1+t)^{-2+\frac{1}{\eta}+[\gamma_2(p)]^+} \|v\|_{X_{22}(T)}^p, \label{Main.Esti.1}
    \end{align}
    where 
    \begin{align*}
        \sigma(\eta) = 
        \begin{cases}
            \frac{1}{q}-\frac{1}{2} &\text{ if } \eta = \alpha = q \in (1,2),\\
            0 &\text{ if } q \geq 2, \text{ that is, } \eta = \alpha =2.       \end{cases}
    \end{align*}
    Moreover, thanks to again Lemma \ref{lemma2.4} and Lemma \ref{LinearEstimates_2D} with $\rho_2 = 1, \rho_1 = 2, (d_1, d_2) = (s, 0)$ for $\tau\in [0, t/2)$, and $\rho_2 = \rho_1 = 2, (d_1, d_2)= (s, s)$ for $\tau \in [t/2, t]$ we also estimate
    \begin{align}
        \|\partial_t u^{\rm non}(t,\cdot)\|_{\dot{H}^{s}} &\lesssim \int_0^{t/2} (1+t-\tau)^{-\frac{3}{2}-\frac{s}{2}} \||v(\tau,\cdot)|^p\|_{L^1 \cap \dot{H}^{s}} d\tau \notag\\
        &\hspace{2cm}+ \int_{t/2}^t (1+t-\tau)^{-1} \||v(\tau,\cdot)|^p\|_{ \dot{H}^{s}} d\tau \notag\\
        &\lesssim \|v\|_{X_{22}(T)}^p \bigg((1+t)^{-\frac{3}{2}-\frac{s}{2}} \int_0^{t/2} (1+\tau)^{-p+1} d\tau\notag\\
        &\hspace{4cm} + (1+t)^{-p+\frac{1}{2} -\frac{s}{2}+\frac{\varepsilon_2}{2}} \int_{t/2}^t (1+t-\tau)^{-1} d\tau\bigg) \notag\\
        &\lesssim (1+t)^{-\frac{3}{2}-\frac{s}{2}+[\gamma_2(p)]^+} \|v\|_{X_{22}(T)}^p. \label{Main.Esti.2}
    \end{align}
From this, we can easily see that the estimates (\ref{Main.Esti.1}) and (\ref{Main.Esti.2}) lead to (\ref{Esti.Pro2.3.1}).

\textbf{Next, we prove the estimate (\ref{Esti.Pro2.3.2})}. Applying again Lemma \ref{lemma2.4} and Lemma \ref{LinearEstimates_2D} with $\rho_2 = 1,\, \rho_1 = \alpha_2$ and $(d_1, d_2) = (0,0)$ for all $\tau \in [0, t]$, we can also estimate the norms of 
$v^{\rm non}$ as follows, with $\alpha_2 \in \{2, p\}$ and $\sigma(\alpha_2) = \left|\frac{1}{\alpha_2}-\frac{1}{2}\right| < 1$:
\begin{align*}
    \|v^{\rm non}(t,\cdot)\|_{L^{\alpha_2}} &\lesssim \int_0^t (1+t-\tau)^{-1+\frac{1}{\alpha_2}} \||u_t(\tau,\cdot)|^q\|_{L^1 \cap L^{\alpha_2}} d\tau\\
    &\lesssim \|u_t\|_{X_{21}(T)}^q \int_0^t (1+t-\tau)^{-1+\frac{1}{\alpha_2}}(1+\tau)^{-2q+1+q[\gamma_2(p)]^+} d\tau\\
    &\lesssim (1+t)^{-1+\frac{1}{\alpha_2}} \|u_t\|_{X_{21}(T)}^q.
    \end{align*}
   Moreover, in the final estimate, it is easy to see that the condition  (\ref{condition3.1})  implies that
    \begin{align*}
        -2q+1+q[\gamma_2(p)]^+ < -1.
    \end{align*}
    Finally, thanks to again Lemma \ref{LinearEstimates_2D} with $\rho_2 = 1, \rho_1 = 2, (d_1, d_2) = (s, 0)$ for $\tau \in [0, t/2)$, and $\rho_2 = \rho_1 = 2, (d_1, d_2) = (s, 0)$ for $\tau \in [t/2, t]$, one has
    \begin{align*}
    \|v^{\rm non}(t,\cdot)\|_{\dot{H}^{s}} &\lesssim \int_0^{t/2} (1+t-\tau)^{-\frac{1}{2}-\frac{s}{2}} \||u_t(\tau,\cdot)|^q\|_{L^1 \cap \dot{H}^{s-1}} d\tau \\
    &\hspace{3cm} +\int_{t/2}^t (1+t-\tau)^{-\frac{s}{2}} \||u_t(\tau,\cdot)|^q\|_{L^2 \cap \dot{H}^{s-1}} d\tau\\
    &\lesssim \|u_t\|_{X_{21}(T)}^p \bigg((1+t)^{-\frac{1}{2}-\frac{s}{2}} \int_0^{t/2} (1+\tau)^{-2q+1+q[\gamma_2(p)]^+} d\tau\\
    &\hspace{4cm}+ (1+t)^{-2q+\frac{1}{2}+q[\gamma_2(p)]^+ }\int_{t/2}^t (1+t-\tau)^{-\frac{s}{2}}  d\tau\bigg)\\
    &\lesssim (1+t)^{-\frac{1}{2}-\frac{s}{2}} \|u_t\|_{X_{21}(T)}^q.
\end{align*}
From the estimates above, we have completed the proof of Proposition \ref{Pro2.3}.
    
\end{proof}

\begin{proposition}\label{Pro2.7}
    Under the assumptions of Theorem \ref{Theorem3}, the following estimates hold for all $(u,v)$ and $(\bar{u}, \bar{v}) \in Z_2(T) \times X_{22}(T)$:
     \begin{align}
     \|\mathcal{N}_{22}[v]-\mathcal{N}_{22}[\bar{v}]\|_{Y_{22}(T)} &\lesssim \|u_t-\bar{u}_t\|_{Y_{21}(T)}\left(\|u_t\|_{X_{21}(T)}^{q-1} + \|\bar{u}_t\|_{X_{21}(T)}^{q-1}\right).\label{Esti.Pro2.4.1}\\
\|\partial_t \mathcal{N}_{21}[u] - \partial_t \mathcal{N}_{21}[\bar{u}]\|_{Y_{21}(T)} &\lesssim \|v-\bar{v}\|_{Y_{22}(T)} \left(\|v\|_{X_{22}(T)}^{p-1} +\|\bar{v}\|_{X_{22}(T)}^{p-1} \right), \label{Esti.Pro2.4.2}
    \end{align}
\end{proposition}
\begin{proof}
    To begin with, it is a fact that
    \begin{align*}
    (\mathcal{N}_{22}[v]-\mathcal{N}_{22}[\bar{v}])(t,x) &= (v^{\rm non}- \bar{v}^{\rm non})(t,x) \\
       &= \int_0^t \mathcal{K}(t-\tau,x) \ast_x \left(|u_t(\tau,x)|^q-|\bar{u}_t(\tau,x)|^q\right) d\tau,\\
       (\partial_t \mathcal{N}_{21}[u] - \partial_t \mathcal{N}_{21}[\bar{u}])(t,x) &= (\partial_t u^{\rm non} -\partial_t \bar{u}^{\rm non})(t,x) \\
       &= \int_0^t \partial_t \mathcal{K}(t-\tau,x)\ast_x \left(|v(\tau,x)|^p - |\bar{v}(\tau,x)|^p\right) d\tau.
    \end{align*}
    \textbf{Now, we are going to prove the estimate (\ref{Esti.Pro2.4.1})}. Applying Lemma \ref{LinearEstimates_2D} with $ \rho_2 = m_3 \in (1, 2), \rho_1 = \beta = \max\{2,p\}, (d_1, d_2)= (0,0)$ for all $\tau \in [0,t]$, we derive the following estimates:
    \begin{align}
        \left\|(v^{\rm non}-\bar{v}^{\rm non})(t,\cdot)\right\|_{L^{\beta}} &\lesssim \int_0^t (1+t-\tau)^{-(\frac{1}{m_3}-\frac{1}{\beta})} \||u_t(\tau,\cdot)|^q -|\bar{u}_t(\tau,\cdot)|^q\|_{L^{m_3} \cap H^{-1}_\beta} d\tau \notag\\
        &\lesssim \int_0^t (1+t-\tau)^{-(\frac{1}{m_3}-\frac{1}{\beta})} \||u_t(\tau,\cdot)|^q -|\bar{u}_t(\tau,\cdot)|^q\|_{L^{m_3}} d\tau,\label{Ine2.5.1}
        \end{align}
        and with $\rho_2 = m_4 \in (1,2), \rho_1 = 2, (d_1, d_2) = (\theta(1-\kappa), 0)$ for all $\tau \in [0, t], \theta \in \{0,1\}$,  we also gain
        \begin{align}
        \left\|(v^{\rm non}-\bar{v}^{\rm non})(t,\cdot)\right\|_{\dot{H}^{\theta(1-\kappa)}} &\lesssim \int_0^t (1+t-\tau)^{-(\frac{1}{m_4}-\frac{1}{2})-\frac{\theta(1-\kappa)}{2}} \||u_t(\tau,\cdot)|^q -|\bar{u}_t(\tau,\cdot)|^q\|_{L^{m_4} \cap H^{-\kappa}} d\tau\notag\\
        &\lesssim \int_0^t (1+t-\tau)^{-(\frac{1}{m_4}-\frac{1}{2})-\frac{\theta(1-\kappa)}{2}} \||u_t(\tau,\cdot)|^q -|\bar{u}_t(\tau,\cdot)|^q\|_{L^{m_4}} d\tau. \label{Ine2.5.2}
    \end{align}
    Here, we used the embedding 
    \begin{align*}
        \|\psi\|_{L^\beta} \lesssim \|\psi\|_{H^1_{m_3}} &\,\,\text{ with }  \frac{1}{2} < \frac{1}{m_3} \leq \frac{1}{2} +\frac{1}{\beta},\\
        \|\psi\|_{L^2} \lesssim \|\psi\|_{H^{\kappa}_{m_4}} &\,\,\text{ with } \,\, \frac{1}{2} < \frac{1}{m_4} \leq \frac{1+\kappa}{2}  
    \end{align*}
    in the 2-dimentional space. Next, for $j = 3,4$, using H\"older's inequality with $1/m_j = 1/2 + 1/\mu_j$, we arrive at
    \begin{align*}
        \||u_t(\tau,\cdot)|^q -|\bar{u}_t(\tau,\cdot)|^q\|_{L^{m_j}} \leq \|u_t(\tau,\cdot)-\bar{u}_t(\tau,\cdot)\|_{L^2} \left(\|u_t(\tau,\cdot)\|_{L^{\mu_j(q-1)}}^{q-1} + \|\bar{u}_t(\tau,\cdot)\|_{L^{\mu_j(q-1)}}^{q-1} \right).
    \end{align*}
    The norm definition of $Y_{21}(T)$ leads to
    \begin{align*}
        \|u_t(\tau,\cdot)-\bar{u}_t(\tau,\cdot)\|_{L^2} \leq (1+\tau)^{-\frac{1}{2}-\frac{1}{\beta}+[\gamma_2(p)]^+} \|u_t-\bar{u}_t\|_{Y_{21}(T)}.
    \end{align*}
    Thanks to Proposition \ref{fractionalGagliardoNirenberg} to have
    \begin{align*}
        \|u_t(\tau,\cdot)\|_{L^{\mu_j(q-1)}}^{q-1}  &\lesssim \|u_t(\tau,\cdot)\|_{L^2}^{(q-1)(1-\omega_7)} \|u_t(\tau,\cdot)\|_{\dot{H}^{s}}^{(q-1)\omega_7}\\
        &\lesssim (1+\tau)^{-2(q-1)+\frac{1}{\mu_j}+(q-1)[\gamma_2(p)]^+} \|u_t\|_{X_{21}(T)}^{q-1},
    \end{align*}
    where
    \begin{align*}
        \omega_7 := \frac{2}{s}\left(\frac{1}{2}-\frac{1}{\mu_j(q-1)}\right) \in [0, 1],
    \end{align*}
    that is 
$$
            \displaystyle\frac{1}{\mu_j} \leq \displaystyle\frac{q-1}{2}\quad \text{ and }\quad
            \displaystyle\frac{1}{2} -\displaystyle\frac{1}{\mu_j(q-1)} \leq \displaystyle\frac{s}{2}.
$$
    We summarize the following conditions of $m_3, m_4$ with $\beta =\max\{2,p\}$:
  \begin{align*}
     \frac{1}{2} <  \frac{1}{m_3} \leq \min \left\{\frac{1}{2} + \frac{1}{\beta}, \frac{q}{2} \right\} \quad\text{ and }\quad \frac{1}{2} < \frac{1}{m_4} \leq \frac{1+\kappa}{2}.
  \end{align*}
  We can easily see that the parameters $m_3$ and $m_4$
  exist. In particular, we get
  \begin{align*}
      \frac{1}{m_3} = \frac{1}{m_4} = \frac{1+\kappa}{2}.
  \end{align*}
  For this reason, we have the following estimates:
  \begin{align*}
      &\||u_t(\tau,\cdot)|^q -|\bar{u}_t(\tau,\cdot)|^q\|_{L^{m_j}} \\
      &\hspace{1cm}\lesssim (1+\tau)^{-2(q-1) -1 -\frac{1}{\beta}+\frac{1}{m_j} + q[\gamma_2(p)]^+ } \|u_t-\bar{u}_t\|_{Y_{21}(T)} \left(\|u_t\|_{X_{21}(T)}^{q-1} + \|\bar{u}_t\|_{X_{21}(T)}^{q-1}\right).
  \end{align*}
  As a result from (\ref{Ine2.5.1})-(\ref{Ine2.5.2}), one finds
  \begin{align*}
      \left\|(v^{\rm non}-\bar{v}^{\rm non})(t,\cdot)\right\|_{L^{\beta}} &\lesssim P_1(t) \|u_t-\bar{u}_t\|_{Y_{21}(T)} \left(\|u_t\|_{X_{21}(T)}^{q-1} + \|\bar{u}_t\|_{X_{21}(T)}^{q-1}\right),\\
      \left\|(v^{\rm non}-\bar{v}^{\rm non})(t,\cdot)\right\|_{\dot{H}^{\theta(1-\kappa)}} &\lesssim P_2(t) \|u_t-\bar{u}_t\|_{Y_{21}(T)} \left(\|u_t\|_{X_{21}(T)}^{q-1} + \|\bar{u}_t\|_{X_{21}(T)}^{q-1}\right),
  \end{align*}
   where
  \begin{align*}
      P_1(t) &:= \int_0^t (1+t-\tau)^{-(\frac{1}{m_3}-\frac{1}{\beta})}  (1+\tau)^{-2(q-1) -1 -\frac{1}{\beta}+\frac{1}{m_3} + q[\gamma_2(p)]^+ } d\tau\\
      &\lesssim (1+t)^{-(\frac{1}{m_3}-\frac{1}{\beta})} \int_0^{t/2} (1+\tau)^{-2(q-1) -1 -\frac{1}{\beta}+\frac{1}{m_3} + q[\gamma_2(p)]^+} d\tau\\
      &\qquad+ (1+t)^{-2(q-1) -1 -\frac{1}{\beta}+\frac{1}{m_3} + q[\gamma_2(p)]^+} \int_{t/2}^t (1+t-\tau)^{-(\frac{1}{m_3}-\frac{1}{\beta})} d\tau\\
      &=: P_{11}(t)+P_{12}(t),\\
      P_2(t)&:= \int_0^t (1+t-\tau)^{-(\frac{1}{m_4}-\frac{1}{2})-\frac{\theta(1-\kappa)}{2}} (1+\tau)^{-2(q-1) -1 -\frac{1}{\beta}+\frac{1}{m_4} + q[\gamma_2(p)]^+} d\tau\\
      &\lesssim (1+t)^{-(\frac{1}{m_4}-\frac{1}{2})-\frac{\theta(1-\kappa)}{2}} \int_0^{t/2} (1+\tau)^{-2(q-1) -1 -\frac{1}{\beta}+\frac{1}{m_4} + q[\gamma_2(p)]^+} d\tau\\
      &\qquad + (1+t)^{-2(q-1) -1 -\frac{1}{\beta}+\frac{1}{m_4} + q[\gamma_2(p)]^+} \int_{t/2}^t (1+t-\tau)^{-(\frac{1}{m_4}-\frac{1}{2})-\frac{\theta(1-\kappa)}{2}} d\tau\\
      &=: P_{21}(t) + P_{22}(t).
  \end{align*}
To estimate $P_{11}(t)$, we consider the following cases.
  \begin{itemize}
  [leftmargin=*]
      \item \textbf{Case 1:} If $$-2(q-1) -1 -\frac{1}{\beta}+\frac{1}{m_3} + q[\gamma_2(p)]^+ \leq -1,$$ we immediately conclude
      $$P_{11}(t) \lesssim (1+t)^{-(\frac{1}{m_3}-\frac{1}{\beta})} \log(e+t) \lesssim 1.$$
      \item \textbf{Case 2:} If $$-2(q-1) -1 -\frac{1}{\beta}+\frac{1}{m_3} + q[\gamma_2(p)]^+ > -1,$$ we see that
      $$P_{11}(t) \lesssim (1+t)^{-2(q-1)+q[\gamma_2(p)]^+} \lesssim 1,$$ due to $pq > 2$, that is
      $$-2(q-1)+q[\gamma_2(p)]^+ < 0.$$
  \end{itemize}
  From this, we have $P_{11}(t) \lesssim 1$ for all $t > 0$. By proceeding similarly, we also obtain $P_{12}(t) \lesssim 1$ for all $t > 0$ due to
  $$-\frac{1}{m_3}+\frac{1}{\beta} > -1.$$ These lead to $P_1(t) \lesssim 1$ for all $t > 0$.\\
  For the term $P_{21}(t)$, we also divide two cases as follows:
  \begin{itemize}
  [leftmargin=*]
      \item \textbf{Case 1: } If $$-2(q-1) -1 -\frac{1}{\beta}+\frac{1}{m_4} + q[\gamma_2(p)]^+ \leq -1,$$
      we immediately obtain
      \begin{align*}
          P_{21}(t) &\lesssim (1+t)^{-(\frac{1}{m_4}-\frac{1}{2})-\frac{\theta(1-\kappa)}{2}} \log(e+t) = (1+t)^{-\frac{\kappa}{2}-\frac{\theta(1-\kappa)}{2}} \log(e+t) \lesssim (1+t)^{\frac{1}{2}-\frac{1}{\beta} -\frac{\theta(1-\kappa)}{2}}. 
      \end{align*}

  \item \textbf{Case 2: } If $$-2(q-1) -1 -\frac{1}{\beta}+\frac{1}{m_4} + q[\gamma_2(p)]^+ > -1,$$
  we arrive at
  \begin{align*}
      P_{21}(t) \lesssim (1+t)^{-2(q-1)+q[\gamma_2(p)]^+ - (\frac{1}{\beta}-\frac{1}{2})-\frac{\theta(1-\kappa)}{2}} \lesssim (1+t)^{\frac{1}{2}-\frac{1}{\beta} -\frac{\theta(1-\kappa)}{2}}.
  \end{align*}
  From these, we can conclude $P_{21}(t) \lesssim (1+t)^{\frac{1}{2}-\frac{1}{\beta} -\frac{\theta(1-\kappa)}{2}}$ for all $t >0$ and $\theta\in \{0,1\}.$ 
  \end{itemize}
  Finally, we estimate the quantity $P_{22}(t)$. It is a fact that
  \begin{align*}
      -\left(\frac{1}{m_4}-\frac{1}{2}\right) - \frac{\theta(1-\kappa)}{2} > -1.  
      \end{align*}
   Therefore, we see that
      \begin{align*}
          P_{22}(t) \lesssim (1+t)^{-2(q-1)+q[\gamma_2(p)]^+ - (\frac{1}{\beta}-\frac{1}{2})-\frac{\theta(1-\kappa)}{2}} \lesssim (1+t)^{\frac{1}{2}-\frac{1}{\beta} -\frac{\theta(1-\kappa)}{2}}.
      \end{align*}

  In summary, we can conclude that $P_2(t) \lesssim (1+t)^{\frac{1}{2}-\frac{1}{\beta} -\frac{\theta(1-\kappa)}{2}}$. The above calculations lead to the consequence that
  \begin{align}\label{Main.Es.Pro2.7.1}
      \left\|(v^{\rm non}-\bar{v}^{\rm non})(t,\cdot)\right\|_{L^{\beta}} &\lesssim  \|u_t-\bar{u}_t \|_{Y_{21}(T)} \left(\|u_t\|_{X_{21}(T)}^{q-1} + \|\bar{u}_t\|_{X_{21}(T)}^{q-1}\right)
  \end{align}
  and
  \begin{align}\label{Main.Es.Pro2.7.2}
      &\left\|(v^{\rm non}-\bar{v}^{\rm non})(t,\cdot)\right\|_{\dot{H}^{\theta(1-\kappa)}} \lesssim (1+t)^{\frac{1}{2}-\frac{1}{\beta} -\frac{\theta(1-\kappa)}{2}} \|u_t-\bar{u}_t\|_{Y_{21}(T)} \left(\|u_t\|_{X_{21}(T)}^{q-1} + \|\bar{u}_t\|_{X_{21}(T)}^{q-1}\right),
  \end{align}
  for all $t > 0$ and $\theta \in \{0,1\}$. The estimates (\ref{Main.Es.Pro2.7.1}) and (\ref{Main.Es.Pro2.7.2}) imply (\ref{Esti.Pro2.4.1}).

  \textbf{To complete the proof, we show the  proof of the estimate (\ref{Esti.Pro2.4.2}).} Thanks to again Lemma \ref{LinearEstimates_2D} for $1<m<2$, we have
  \begin{align*}
      \left\|\partial_t(u^{\rm non}-\bar{u}^{\rm non})(t,\cdot)\right\|_{L^2} &\lesssim \int_0^t (1+t-\tau)^{-(\frac{1}{m}-\frac{1}{2})-1} \||v(\tau, \cdot)|^p-|\bar{v}(\tau,\cdot)|^p\|_{L^m \cap L^2} d\tau.
  \end{align*}
  Thanks to again H\"older's inequality, we get 
  \begin{align*}
      \||v(\tau, \cdot)|^p-|\bar{v}(\tau,\cdot)|^p\|_{L^\theta} \leq \|v(\tau,\cdot)-\bar{v}(\tau,\cdot)\|_{L^{p\theta}} \left(\|v(\tau,\cdot)\|_{L^{p\theta}}^{p-1} + \|\bar{v}(\tau,\cdot)\|_{L^{p\theta}}^{p-1}\right),
  \end{align*}
  with $\theta \in \{m, 2
  \}$. Applying again Proposition \ref{fractionalGagliardoNirenberg}, we derive the following estimate:
  \begin{align*}
      \|v(\tau,\cdot)-\bar{v}(\tau,\cdot)\|_{L^{p\theta}}  &\lesssim \|v(\tau,\cdot)-\bar{v}(\tau,\cdot)\|_{L^\beta}^{1-\omega_8} \|v(\tau,\cdot)-\bar{v}(\tau,\cdot)\|_{\dot{H}^{1-\kappa}}^{\omega_8}\\
      &\lesssim (1+\tau)^{-\frac{1}{\beta}+\frac{1}{p\theta}}\|v- \bar{v}\|_{Y_{22}(T)},
  \end{align*}
  where $\theta \in \{m,2\}$ and
  \begin{align}\label{constrain2.7.3}
      \omega_8:= \frac{\frac{1}{\beta}-\frac{1}{p\theta }}{\frac{1}{\beta}-\frac{1}{2}+\frac{1-\kappa}{2}} \in [0,1], \text{ that is, }  \frac{\beta}{p} = \frac{\max\{2,p\}}{p} \leq m < 2.
  \end{align}
  On the other hand,
\begin{align*}
    \|v(\tau,\cdot)\|_{L^{p\theta}} &\lesssim \|v(\tau,\cdot)\|_{L^p}^{1-\omega_9} \|v(\tau,\cdot)\|_{\dot{H}^{s}}^{\omega_9}\lesssim (1+\tau)^{-1+\frac{1}{p\theta}} \|v\|_{X_{22}(T)}
\end{align*}
and
\begin{align*}
    \|\bar{v}(\tau,\cdot)\|_{L^{p\theta}} &\lesssim \|\bar{v}(\tau,\cdot)\|_{L^p}^{1-\omega_9} \|\bar{v}(\tau,\cdot)\|_{\dot{H}^{s}}^{\omega_9} \lesssim (1+\tau)^{-1+\frac{1}{p\theta}} \|\bar{v}\|_{X_{22}(T)},
\end{align*}
where
\begin{align*}
    \omega_9:= \frac{\frac{1}{p}-\frac{1}{p\theta}}{\frac{1}{p}-\frac{1}{2}+\frac{s}{2}} \in [0, 1].
\end{align*}
For these reasons, we arrive at
\begin{align*}
    \||v(\tau, \cdot)|^p-|\bar{v}(\tau,\cdot)|^p\|_{L^\theta} 
    &\lesssim (1+\tau)^{-p+1-\frac{1}{\beta}+\frac{1}{\theta}} \|v- \bar{v}\|_{Y_{22}(T)}\left(\|v\|_{X_{22}(T)}^{p-1}+\|\bar{v}\|_{X_{22}(T)}^{p-1}\right)\\
    &\lesssim (1+\tau)^{-p+1-\frac{1}{\beta}+\frac{1}{m}} \|v- \bar{v}\|_{Y_{22}(T)}\left(\|v\|_{X_{22}(T)}^{p-1}+\|\bar{v}\|_{X_{22}(T)}^{p-1}\right).
\end{align*}
  Therefore, we can conclude that
  \begin{align*}
      \left\|\partial_t(u^{\rm non}-\bar{u}^{\rm non})(t,\cdot)\right\|_{L^2} \lesssim P_3(t) \|v- \bar{v}\|_{Y_{22}(T)}\left(\|v\|_{X_{22}(T)}^{p-1}+\|\bar{v}\|_{X_{22}(T)}^{p-1}\right),
  \end{align*}
  where
  \begin{align*}
      P_3(t) &:= \int_0^t (1+t-\tau)^{-(\frac{1}{m}-\frac{1}{2})-1} (1+\tau)^{-p+1-\frac{1}{\beta}+\frac{1}{m}}  d\tau\\
      &\quad\lesssim (1+t)^{-(\frac{1}{m}-\frac{1}{2})-1} \int_0^{t/2} (1+\tau)^{-p+1-\frac{1}{\beta}+\frac{1}{m}}  d\tau \\
      &\qquad + (1+t)^{-p+1-\frac{1}{\beta}+\frac{1}{m}}\int_{t/2}^t (1+\tau)^{-(\frac{1}{m}-\frac{1}{2})-1} d\tau\\
      &=: P_{31}(t) + P_{32}(t). 
  \end{align*}
  We fix $1/m = 1/2 +\varepsilon_2$, that is, $m= 2/(1+2\varepsilon_2)$, it satisfies the constrain (\ref{constrain2.7.3}), and continue by dividing into the following two cases.
  \begin{itemize}
  [leftmargin=*]
      \item In the case $p \leq 2$, that is, $\beta = 2$,
  we can easily see that
  \begin{align*}
      P_{31}(t) &\lesssim
      \begin{cases}
      \vspace{0.3cm}
          (1+t)^{-1-\varepsilon_2} \log(e+t) &\text{ if } -p+\frac{1}{2}+\frac{1}{m} \leq -1,\\
          (1+t)^{1-p} &\text{ if } -p+\frac{1}{2}+\frac{1}{m} > -1
      \end{cases}\\
      &\lesssim (1+t)^{-1+(2-p+\varepsilon_2)} = (1+t)^{-\frac{1}{\beta}-\frac{1}{2}+[\gamma_2(p)]^+}
  \end{align*}
  and
  \begin{align*}
      P_{32}(t) \lesssim (1+t)^{1-p+\varepsilon_2} = (1+t)^{-\frac{1}{\beta}-\frac{1}{2}+[\gamma_2(p)]^+}. 
  \end{align*}
  \item In the case $p > 2$, that is, $\beta = p$, we arrive at 
  \begin{align*}
      P_{31}(t) &\lesssim
      \begin{cases}
      \vspace{0.3cm}
          (1+t)^{-1-\varepsilon_2} \log(e+t) &\text{ if } -p+1-\frac{1}{p}+\frac{1}{m} \leq -1,\\
          (1+t)^{\frac{3}{2}-p-\frac{1}{p}} &\text{ if } -p+1-\frac{1}{p}+\frac{1}{m} > -1
      \end{cases}\\
      &\lesssim (1+t)^{-\frac{1}{p}-\frac{1}{2}} = (1+t)^{-\frac{1}{\beta}-\frac{1}{2}+[\gamma_2(p)]^+}
  \end{align*}
  and 
  \begin{align*}
      P_{32}(t) \lesssim (1+t)^{\frac{3}{2}-p-\frac{1}{p}+\varepsilon_2} \lesssim (1+t)^{-\frac{1}{p}-\frac{1}{2}} = (1+t)^{-\frac{1}{\beta}-\frac{1}{2}+[\gamma_2(p)]^+}.
  \end{align*}
\end{itemize}
In summary, we obtain the following estimate:
\begin{align*}
    P_3(t) \lesssim (1+t)^{-\frac{1}{\beta}-\frac{1}{2}+[\gamma_2(p)]^+}.
\end{align*}
Hence, we can conclude that
\begin{align}\label{Main.Es.Pro2.7.3}
    &\left\|\partial_t(u^{\rm non}-\bar{u}^{\rm non})(t,\cdot)\right\|_{L^2} \lesssim (1+t)^{-\frac{1}{\beta}-\frac{1}{2}+[\gamma_2(p)]^+} \|v- \bar{v}\|_{Y_{22}(T)}\left(\|v\|_{X_{22}(T)}^{p-1}+\|\bar{v}\|_{X_{22}(T)}^{p-1}\right).
\end{align}
This immediately leads to the estimate (\ref{Esti.Pro2.4.2}).
Thus, we complete the proof of Proposition \ref{Pro2.7}.
\end{proof}

\textbf{Proof of Theorem \ref{Theorem3}.}
Employing again Lemma \ref{LinearEstimates_2D} and the definitions of $u^{\rm lin}, \, v^{\rm lin}$, we immediately obtain
\begin{align*}
    \|(\partial_t u^{\rm lin}, v^{\rm lin})\|_{X_2(T)} \lesssim \varepsilon\|(u_0, u_1, v_0, v_1)\|_{\mathcal{D}_{\alpha,p}^2(s)}.
\end{align*}
For this reason, combining with Propositions \ref{Pro2.3} and \ref{Pro2.7}, we obtain the following two important estimates for all $(u,v)$ and $(\bar{u}, \bar{v}) \in Z_2(T) \times X_{22}(T)$:
    \begin{align*}
        &\|(\partial_t \mathcal{N}_{21}[u],\, \mathcal{N}_{22}[v])\|_{X_2(T)} \leq C_1 \varepsilon \|(u_0, u_1)\|_{\mathcal{D}_{\alpha,p}^2(s)} + C_1\|(u_t, v)\|_{X_2(T)}^p + C_1\|(u_t, v)\|_{X_2(T)}^q, 
        \end{align*}
        and
        \begin{align*}
        &\|(\partial_t \mathcal{N}_{21}[u],\, \mathcal{N}_{22}[v])- (\partial_t \mathcal{N}_{21}[\bar{u}], \mathcal{N}_{22}[\bar{v}])\|_{Y_2(T)}\\ 
        &\quad\leq C_2 \|(u_t, v)-(\bar{u}_t, \bar{v})\|_{Y_2(T)}\\
        &\hspace{2cm} \times \big(\|(u_t,v)\|_{X_2(T)}^{p-1} + \|(u_t, v)\|_{X_2(T)}^{q-1}+ \|(\bar{u}_t,\bar{v})\|_{X_2(T)}^{p-1} + \|(\bar{u}_t, \bar{v})\|_{X_2(T)}^{q-1}\big). 
    \end{align*}
Similar to proof of Theorem \ref{Theorem1}, we can complete the proof of Theorem \ref{Theorem3}.
%................................................................................
\section{Blow-up results}\label{Proof of blow-up results}
\subsection{Preliminaries}
In this section, our main aim is to prove Theorem \ref{Theorem2}. To prove our result, we recall the definition of weak solution to (\ref{Main.Eq.1}).
\begin{definition} \label{defweaksolution_1}
Let $p>1$,  $T>0$. We say that $(u,v) \in \mathcal{C}^1([0,T), L^2) \times \mathcal{C}([0, T), L^2) $ is a local weak solution to (\ref{Main.Eq.1}) if for any function $\phi_j(t,x)= \eta_j(t) \varphi(x)$, where $\eta_j \in \mathcal{C}_0^{\infty}([0, \infty))$ with $j = 1,2$ and $\varphi \in \mathcal{C}_0^{\infty}(\mathbb{R}^n)$, it holds
\begin{align*}
\int_0^T \int_{\R^n}|v(t,x)|^p \phi_1(t,x)dxdt &= \int_0^T \int_{\R^n} (\partial_t^2 - \Delta + \partial_t)u(t,x) \phi_1(t,x)dxdt
\end{align*}
and
\begin{align*}
    \int_0^T \int_{\R^n}|u_t(t,x)|^q \phi_2(t,x)dxdt &= \int_0^T \int_{\R^n} (\partial_t^2 - \Delta + \partial_t)v(t,x) \phi_2(t,x)dxdt
\end{align*}
If $T= \ity$, we say that $(u, v)$ is a global weak solution to (\ref{Main.Eq.1}) in $\mathcal{C}^1([0,\infty), L^2) \times \mathcal{C}([0, \infty), L^2)$. 
\end{definition}
Let us introduce the test functions $\eta= \eta(t)$ and $\varphi=\varphi(x)$ satisfying the following properties:
\begin{align*}
&1.\quad \eta \in \mathcal{C}_0^\ity([0,\ity)) \text{ and }
\eta(t)=\begin{cases}
1 \quad \text{ for }0 \le t \le 1/2, \\
0 \quad \text{ for }t \ge 1,
\end{cases} & \nonumber \\
&2.\quad \varphi \in \mathcal{C}_0^\ity(\mathbb{R}^n) \text{ and }
\varphi(x)=\begin{cases}
1 \quad \text{ for } |x| \le 1/2, \\
0 \quad \text{ for } |x| \ge 1,
\end{cases} & \nonumber \\
&3.\quad \eta^{-\frac{h'}{h}}\big(|\eta'|^{h'}+|\eta''|^{h'}\big) \text{ and } \varphi^{-\frac{h'}{h}} |\Delta \varphi|^{h'}\text{ is bounded, } & \\
\end{align*}
where $h \in \{p,q\}$ and $h'$ is the conjugate of $h$. In addition, we suppose that $\eta(t)$ is a decreasing
function and that $\varphi(x)$ is a radial function fulfilling $\varphi(x) \leq \varphi(y)$ for any $|x| \geq |y|$.
 Moreover, we define the following test function for $R \geq 1$:
$$ \Phi_{R}(t,x):= \eta_{R}(t) \varphi_R(x), $$
where $\eta_{R}(t):= \eta(R^{-2}t)$ and $\varphi_R(x):= \varphi(R^{-1}x)$. Moreover, we define the functionals
\begin{align*}
 &\mathcal{I}_{R}:= \int_0^{\ity}\int_{\R^n}|v(t,x)|^p \Phi_{R}(t,x) dxdt= \int_{0}^{R^2}\int_{|x| \leq R}|v(t,x)|^p \Phi_{R}(t,x) dxdt, \\
 \mathcal{I}_{1,R}&:= \int_{R^2/2}^{R^2}\int_{|x| \leq R}|v(t,x)|^p \Phi_{R}(t,x) dxdt, \qquad \mathcal{I}_{2,R}:= \int_{0}^{R^2}\int_{R/2 \leq |x| \leq R}|v(t,x)|^p \Phi_{R}(t,x) dxdt
\end{align*}
and
\begin{align*}
 &\mathcal{J}_{ R}:= \int_0^{\ity}\int_{\mathbb{R}^n}|u_t(t,x)|^q \Phi_{R}(t,x) dxdt= \int_{0}^{R^2}\int_{|x| \leq R}|u_t(t,x)|^q \Phi_{R}(t,x) dxdt, \\
  \mathcal{J}_{1,R}&:= \int_{R^2/2}^{R^2}\int_{|x| \leq R}|u_t(t,x)|^p \Phi_{R}(t,x) dxdt, \qquad \mathcal{J}_{2,R}:= \int_{0}^{R^2}\int_{R/2 \leq |x| \leq R}|u_t(t,x)|^p \Phi_{R}(t,x) dxdt
\end{align*}

\subsection{Proof of Theorem \ref{Theorem2}} At first, we assume that $(u, v)= (u(t,x), v(t,x))$ is a global weak solution from $\mathcal{C}^1([0, \infty), L^2) \times \mathcal{C}([0,\infty), L^2)$ to (\ref{Main.Eq.1}). From Definition \ref{defweaksolution_1}, we replace the functions $\phi_1(t,x)$  and $\phi_2(t,x)$ by $\Phi_{R}(t,x)$. Then, we perform integration by parts to derive
\begin{align}
    & \int_0^{\infty}\int_{\mathbb{R}^n} |v(t,x)|^p \Phi_{R}(t, x) dxdt  +\varepsilon  \int_0^{\infty} \eta_R(t)dt \int_{\mathbb{R}^n} u_0(x) (-\Delta \varphi)(R^{-1}x)dx+\varepsilon\int_{\mathbb{R}^n} u_1(x)\varphi_R(x)dx\notag \\
    &\hspace{1cm}= \int_0^{\infty}\int_{\mathbb{R}^n} u_t(t,x) \left(-\partial_t \eta_{R}(t) \varphi_R(x)  - \Psi_{R}(t) \Delta \varphi_R(x) + \eta_{R}(t) \varphi_R(x)\right) dxdt
    \label{equation1.3.1}
\end{align}
and 
\begin{align}
    &\int_0^{\infty}\int_{\mathbb{R}^n} |u_t(t,x)|^q \Phi_{R}(t, x) dxdt +\varepsilon\int_{\mathbb{R}^n} (v_0(x)+v_1(x))\varphi_R(x)dx\notag \notag\\
    &\hspace{1cm}= \int_0^{\infty} \int_{\mathbb{R}^n} v(t,x) (\partial_t^2 -\Delta - \partial_t) \Phi_{R}(t,x) dx dt,\label{equation1.3.2}
\end{align}
where $\Psi_{R}(t) = \displaystyle\int_t^{\infty} \eta_{R}(s) ds$. We fix the following quantities:
\begin{align*}
    \rho_{1, R} &:=  \int_0^{\infty} \eta_R(t)dt \int_{\mathbb{R}^n} u_0(x) (-\Delta \varphi)(R^{-1}x)dx + \int_{\mathbb{R}^n} u_1(x)\varphi_R(x)dx,\notag\\
    \rho_{2, R} &:= \int_{\mathbb{R}^n} (v_0(x)+v_1(x))\varphi_R(x)dx.
\end{align*}
Applying H\"older's inequality with $1/p + 1/p' = 1$  and using the change of variables $\Tilde{t} = R^{-2} t$, $\Tilde{x} = R^{-1} x$, we may estimates as follows:
\begin{align*}
    &\int_0^{\infty}\int_{\mathbb{R}^n} \left|v(t,x) \left(\partial_t^2  -\partial_t-\Delta\right)\Phi_{R}(t,x)\right| dxdt \lesssim \left(\mathcal{I}_{1,R}^\frac{1}{p} +\mathcal{I}_{2,R}^{\frac{1}{p}}\right) R^{-2 +\frac{n+2}{p'}}.
\end{align*}
From these, combining with (\ref{equation1.3.2}) leads to
\begin{align}
     \mathcal{J}_{R} + \varepsilon \rho_{2, R} \lesssim \left(\mathcal{I}_{1,R}^{\frac{1}{p}} + \mathcal{I}_{2,R}^{\frac{1}{p}}\right) R^{-2+\frac{n+2}{p'}}. \label{Main.Es2.2.1}
\end{align}
On the other hand, employing again H\"older's inequality and the change of variables $\Tilde{t} = R^{-2} t$, $\Tilde{x} = R^{-1} x$, we estimate the right-hand side of (\ref{equation1.3.1}) as follows:
\begin{align*}
    &\int_0^{\infty} \int_{\mathbb{R}^n} |u_t(t,x) \partial_t \Phi_{R}(t,x)| dxdt = \int_{R^2/2}^{R^2} \int_{|x| \leq R} |u_t(t,x) \partial_t \Phi_{R}(t,x)| dxdt \lesssim \mathcal{J}_{1,R}^{\frac{1}{q}} R^{-2+\frac{n+2}{q'}}
\end{align*}
and
\begin{align*}
    &\int_0^{\infty} \int_{\mathbb{R}^n}\left|u_t(t,x) \Psi_{R}(t) \Delta \varphi_R(x)\right| dxdt =\int_0^{R^2} \int_{R/2 \leq |x| \leq R}\left|u_t(t,x) \Psi_{R}(t) \Delta \varphi_R(x)\right| dxdt \lesssim \mathcal{J}_{2,R}^{\frac{1}{q}} R^{\frac{n+2}{q'}},
\end{align*}
where we note that
\begin{align*}
    \left|\int_t^{\infty} \eta(s) ds\right|^{q'} |\eta(t)|^{\frac{-q'}{q}}=  \left|\int_t^{1} \eta(s) ds\right|^{q'} |\eta(t)|^{\frac{-q'}{q}} \leq (1-t)^{q'} \eta(t) \leq 1 \text{ for all } t \in \left(\frac{1}{2}, 1\right).
\end{align*}
Moreover, one has
\begin{align*}
    \int_0^{\infty} \int_{\mathbb{R}^n} \left|u_t(t,x) \Phi_{R}(t,x)\right| dxdt \lesssim \mathcal{J}_{R}^{\frac{1}{q}} R^{\frac{n+2}{q'}}.
\end{align*}
 From this, we derive
\begin{align}
   & \mathcal{I}_{R} +\varepsilon \rho_{1, R} \lesssim \mathcal{J}_{R}^{\frac{1}{q}} R^{\frac{n+2}{q'}}. \label{Main.Es2.2.2}
\end{align}
Using the monotone convergence theorem, we derive
\begin{align*}
    \lim_{R \to \infty} \rho_{1, R} = \int_{\mathbb{R}^n} u_1(x) dx >0 \quad\text{ and } \quad\lim_{R \to \infty} \rho_{2, R} = \int_{\mathbb{R}^n} v_0(x) + v_1(x) dx > 0.
\end{align*}
Therefore, there exists a sufficiently large positive number $R_0$ such that
    \begin{align*}
        \rho_{1, R} > 0 \text{ and } \rho_{2, R} > 0,
    \end{align*}
    for all $R \in [R_0, \infty)$. Substituting the left side of
(\ref{Main.Es2.2.2}) into the right side of (\ref{Main.Es2.2.1}) we arrive at
\begin{align*}
    \mathcal{J}_{R} +  \varepsilon \rho_{2, R} \lesssim \mathcal{J}_{R}^{\frac{1}{pq}} R^{\frac{n+2}{pq'}-2+\frac{n+2}{p'}},
\end{align*}
that is
\begin{align}
    \varepsilon \rho_{2, R} \lesssim R^{\frac{pq}{pq-1}(\frac{n+2}{pq'}-2+\frac{n+2}{p'})}, \label{Main.Es.Theorem2}
\end{align}
for all $R \in [R_0, \infty)$.
Condition (\ref{condition2.2}) implies 
\begin{align*}
    \frac{n+2}{pq'} -2+\frac{n+2}{p'} < 0.
\end{align*}
 In the subcritical case, that is, $pq < 1 +2/n$,  taking $R \to \infty$, we arrive at
    \begin{align*}
        \int_{\mathbb{R}^n} \big(v_0(x) + v_1(x)\big) dx = 0.
    \end{align*}
    This is a contradiction to the assumption (\ref{condition2.1}). Thus, we have completed the proof of Theorem \ref{Theorem2}.

%.........................................................
\section{Final Remarks}\label{Final Remarks}

\begin{remark}
\fontshape{n}
\selectfont
    In this paper, we have obtained the critical curve for problem (\ref{Main.Eq.1}) in one and two-dimensional spaces, namely $$pq =1 +\frac{2}{n}.$$ A natural question that arises is what the critical curve for problem (\ref{Main.Eq.1}) would be in higher dimensions 
$n \geq 3$. We believe that it remains the same curve $pq = 1+ 2/n$. However, due to certain limitations related to the high regularity of the initial data required in the estimates for solutions of the linear problem (\ref{Main.Eq.2}) ($\mu =0$), we are still facing challenges in proving this.
\end{remark}

\begin{remark}
\fontshape{n}
\selectfont
Based on the the approach used in this work, please fill a result for the following system:
\begin{equation*}
\begin{cases}
u_{tt} -\Delta u + u_t= |v_t|^p, &\quad x\in \R^n,\, t > 0, \\
v_{tt} -\Delta v +v_t = |u_t|^q, &\quad x \in \mathbb{R}^n, \,t > 0,\\
(u, u_t, v, v_t)(0,x)= \varepsilon (u_0, u_1, v_0, v_1)(x), &\quad x \in \mathbb{R}^n. \\
\end{cases}
\end{equation*}
Probably, one can expect that there always exists the global (in time) existence of small data solutions for any $p>1$ and $q>1$. This will be discussed in our forthcoming work.

\end{remark}

\appendix
\section{Some tools from Harmonic Analysis}\label{Sec.HarmonicAnalysis}
\begin{proposition}[Fractional Gagliardo-Nirenberg inequality, see Corollary 2.4 in \cite{Hajaiej2011}] \label{fractionalGagliardoNirenberg}
Let $1<p,\,p_0,\,p_1<\infty$, $a >0$ and $\theta\in [0,a)$. Then, it holds the following fractional Gagliardo-Nirenberg inequality:
$$ \|u\|_{\dot{H}^{\theta}_p} \lesssim \|u\|_{L^{p_0}}^{1-\omega(\theta,a)}\, \|u\|_{\dot{H}^{a}_{p_1}}^{\omega(\theta,a)}, $$
where $\omega(\theta,a) =\displaystyle\frac{\frac{1}{p_0}-\frac{1}{p}+\frac{\theta}{n}}{\frac{1}{p_0}-\frac{1}{p_1}+\frac{a}{n}}$ and $\displaystyle\frac{\theta}{a}\leq \omega(\theta,a) \leq 1$.
\end{proposition}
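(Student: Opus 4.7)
The plan is to prove the fractional Gagliardo--Nirenberg inequality by combining Littlewood--Paley decomposition with Bernstein-type inequalities, then optimizing over a frequency cutoff. First I would introduce a Littlewood--Paley decomposition $u=\sum_{j\in\mathbb{Z}}\Delta_j u$ adapted to dyadic annuli $\{|\xi|\sim 2^j\}$, together with a low-frequency projector $S_N=\sum_{j\le N}\Delta_j$ and the complementary high-frequency part $\mathrm{Id}-S_N$. The $\dot{H}^{\theta}_p$ norm is equivalent to an appropriate Triebel--Lizorkin / Littlewood--Paley square-function norm, which allows me to separate the estimate of $\||D|^{\theta}u\|_{L^p}$ into a low-frequency contribution and a high-frequency contribution controlled, respectively, by $\|u\|_{L^{p_0}}$ and $\|u\|_{\dot{H}^{a}_{p_1}}$.

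Next, for each dyadic block I would invoke Bernstein's inequality in the form
\begin{equation*}
\bigl\| |D|^{\theta}\Delta_j u\bigr\|_{L^p}\lesssim 2^{j\theta+jn(\frac{1}{p_0}-\frac{1}{p})}\|\Delta_j u\|_{L^{p_0}}
\quad\text{and}\quad
\bigl\| |D|^{\theta}\Delta_j u\bigr\|_{L^p}\lesssim 2^{j(\theta-a)+jn(\frac{1}{p_1}-\frac{1}{p})}\bigl\| |D|^{a}\Delta_j u\bigr\|_{L^{p_1}}.
\end{equation*}
Summing the first estimate over $j\le N$ (where the exponent $\theta + n(\tfrac{1}{p_0}-\tfrac{1}{p})>0$ by the assumption $\theta>0$ combined with standard admissibility) and summing the second over $j>N$ (where $\theta-a + n(\tfrac{1}{p_1}-\tfrac{1}{p})<0$ by the homogeneity $\omega(\theta,a)<1$), I obtain geometric series yielding
\begin{equation*}
\||D|^{\theta}u\|_{L^p}\lesssim 2^{N[\theta+n(\frac{1}{p_0}-\frac{1}{p})]}\|u\|_{L^{p_0}}+ 2^{N[(\theta-a)+n(\frac{1}{p_1}-\frac{1}{p})]}\|u\|_{\dot H^{a}_{p_1}}.
\end{equation*}

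Finally I would optimize over $N\in\mathbb{R}$ (choosing the dyadic scale that balances the two summands), which is a standard two-term minimization. The resulting interpolation exponent is forced by equating the two bounds and coincides exactly with $\omega(\theta,a)=\dfrac{\frac{1}{p_0}-\frac{1}{p}+\frac{\theta}{n}}{\frac{1}{p_0}-\frac{1}{p_1}+\frac{a}{n}}$; one can also derive this formula independently by requiring that the target inequality be invariant under dilations $u\mapsto u(\lambda\cdot)$, which fixes $\omega$ uniquely from scaling.

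The main obstacle will be handling the borderline sums carefully: one must verify that the signs of the exponents in the Bernstein sums are correct (positive for the low-frequency part, negative for the high-frequency part) under the hypothesis $\theta\in[0,a)$ and $\omega\in[\theta/a,1]$, and one must treat the endpoint $\omega=\theta/a$ (corresponding to $p_0=\infty$ heuristically) and the case $p=p_1$ or $p=p_0$ separately, possibly replacing $L^{p_0}$ by its homogeneous Besov analogue. A fully rigorous treatment would also require the Littlewood--Paley characterization of $\dot H^{\theta}_p$ for all $p\in(1,\infty)$, which follows from Mikhlin--H\"ormander multiplier theory; given the reference to \cite{Hajaiej2011} it is reasonable to invoke this machinery as a black box rather than reproducing it here.
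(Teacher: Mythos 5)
First, a point of comparison: the paper does not prove this proposition at all --- it is quoted as a black box from Corollary 2.4 of \cite{Hajaiej2011} --- so there is no in-paper argument to measure yours against, and your sketch has to stand on its own. The Littlewood--Paley-plus-Bernstein route you describe is indeed the standard textbook proof, and the optimization over the cutoff $N$ correctly reproduces the scaling exponent $\omega(\theta,a)$.

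There is, however, one genuine gap. Both dyadic-block estimates you invoke are instances of Bernstein's inequality, which only \emph{raises} the Lebesgue exponent: $\|\Delta_j f\|_{L^{r_2}}\lesssim 2^{jn(\frac{1}{r_1}-\frac{1}{r_2})}\|\Delta_j f\|_{L^{r_1}}$ requires $r_1\le r_2$, since a frequency-localized function on $\mathbb{R}^n$ has no physical-space localization. So your low-frequency block estimate needs $p_0\le p$ and your high-frequency one needs $p_1\le p$. The hypotheses of the proposition do not guarantee this: the scaling relation together with $\theta/a\le\omega\le 1$ excludes $p<\min\{p_0,p_1\}$, but the configuration $p_0\le p<p_1$ is admissible and the inequality remains true there (for instance $n=1$, $a=1$, $\theta=0$, $p_0=2$, $p_1=10$, $\omega=1/10$ forces $p=25/9<p_1$, and the resulting bound $\|u\|_{L^{25/9}}\lesssim\|u\|_{L^2}^{9/10}\|u'\|_{L^{10}}^{1/10}$ is a valid classical Gagliardo--Nirenberg inequality). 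In that regime your high-frequency Bernstein step simply fails, and one must argue differently --- e.g.\ by H\"older interpolation of the block norms between $L^{p_0}$ and $L^{p_1}$ when $p$ lies between them, or via the Besov/Triebel--Lizorkin characterizations and duality developed in \cite{Hajaiej2011}. The delicate points you do list ($\omega=\theta/a$, $p=p_0$, $p=p_1$) do not cover this case, so as written the argument establishes a strictly weaker statement than the one quoted. Two smaller remarks: the positivity of the low-frequency exponent $\theta+n(\frac{1}{p_0}-\frac{1}{p})$ follows from $\omega>0$ (it is $n$ times the numerator of $\omega$), not from ``$\theta>0$'' --- note $\theta=0$ is allowed --- and the passage from the $\ell^1$ sum of block norms to $\|u\|_{\dot H^\theta_p}$, the integer choice of $N$, and the degenerate cases where one of the two right-hand norms vanishes or is infinite each deserve a sentence, though these are routine. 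In the instances actually used in this paper one can check $\max\{p_0,p_1\}\le p$ by hand, so your argument would suffice for those applications even if not for the proposition in full generality.
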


\medskip

\begin{proposition}[Fractional powers, see Proposition 42 and Corollary 43 in \cite{Duong2015}] \label{FractionalPowers}
Let $p>1$, $1< r <\infty$, where $s \in \big(n/r,p\big)$. Let us denote by $F(u)$ one of the functions $|u|^p,\, \pm |u|^{p-1}u$. Then, the following estimates hold:
$$\|F(u)\|_{H^{s}_r}\lesssim \|u\|_{H^{s}_r}\,\, \|u\|_{L^\infty}^{p-1} \quad \text{ and }\quad \| F(u)\|_{\dot{H}^{s}_r}\lesssim \|u\|_{\dot{H}^{s}_r}\,\, \|u\|_{L^\infty}^{p-1}. $$
\end{proposition}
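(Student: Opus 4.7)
The plan is to derive both estimates via Littlewood--Paley/paradifferential techniques, using as a first ingredient the Sobolev embedding $H^s_r(\mathbb{R}^n) \hookrightarrow L^\infty(\mathbb{R}^n)$, guaranteed by the hypothesis $s > n/r$. In particular, every $u \in H^s_r$ automatically belongs to $L^\infty$, so the right-hand side of the claimed inequalities is finite; what remains is to obtain the \emph{factorized} bound in which the $L^\infty$ norm appears raised exactly to the power $p-1$, rather than being absorbed into the $H^s_r$ norm by crude embedding.

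The central step is Bony's paralinearization, which writes $F(u) = T_{F'(u)} u + \Pi(F,u)$, with $T$ the paraproduct operator and $\Pi$ a smoother remainder. Since $|F'(t)| \lesssim |t|^{p-1}$, the paraproduct piece is immediately controlled via
\[
\|T_{F'(u)} u\|_{\dot{H}^s_r} \lesssim \|F'(u)\|_{L^\infty}\|u\|_{\dot{H}^s_r} \lesssim \|u\|_{L^\infty}^{p-1}\|u\|_{\dot{H}^s_r},
\]
which has the required form. The remainder $\Pi(F,u)$ is handled by a Meyer-type composition estimate, valid precisely under the assumption $s < p$ because that assumption guarantees enough classical differentiability of $F$ at the regularity level at which one is measuring. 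To pass from the homogeneous estimate to the inhomogeneous one, combine the above with the trivial $L^r$-bound $\|F(u)\|_{L^r} \leq \|u\|_{L^\infty}^{p-1}\|u\|_{L^r}$ obtained from $|F(u)| \leq |u|^p = |u|^{p-1}\cdot|u|$ and H\"older's inequality, after a standard low/high frequency splitting via the Littlewood--Paley projectors.

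The main obstacle will be the regime $p-1 < s < p$ with $p$ non-integer: in that range $F'(t) = \pm p |t|^{p-1}$ is only H\"older continuous of exponent $p-1 < s$, so the classical Kato--Ponce / chain-rule arguments cannot be applied directly to the remainder $\Pi(F,u)$. The fix is to invoke the fractional chain rule of Christ--Weinstein--Staffilani type, built on a finite-difference characterization of the Riesz potential norm combined with the pointwise inequality $|F(a)-F(b)| \lesssim (|a|+|b|)^{p-1}|a-b|$ and a careful dyadic summation. The restriction $s < p$ is essential (and sharp) precisely here, since higher $s$ would force the argument to see derivatives of $F$ beyond its natural regularity.
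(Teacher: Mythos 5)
The paper does not prove this proposition; it is quoted verbatim from Corollary 43 of \cite{Duong2015} (which in turn rests on the composition-operator results of Runst--Sickel and on the Christ--Weinstein/Staffilani fractional chain rule). Your outline is essentially the standard proof found in that literature: Sobolev embedding $H^s_r\hookrightarrow L^\infty$ for $s>n/r$, Bony paralinearization $F(u)=T_{F'(u)}u+\Pi$, the $L^\infty$-paraproduct bound for the principal term, a Meyer-type estimate for the remainder using only the $C^p$-regularity of $t\mapsto |t|^p$ (which is exactly where $s<p$ enters), and the trivial $L^r$ bound $\|F(u)\|_{L^r}\le\|u\|_{L^\infty}^{p-1}\|u\|_{L^r}$ to pass to the inhomogeneous norm. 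So the route is correct and coincides with the one behind the cited result; the only caveat is that your text is a plan rather than a proof --- the two genuinely technical ingredients (the composition estimate for the remainder $\Pi$ with the factorized constant $\|u\|_{L^\infty}^{p-1}$, and the dyadic summation in the regime $1\le s<p$ where the first-order chain rule alone does not suffice) are asserted rather than carried out, and your parenthetical claim that $s<p$ is \emph{sharp} is an overstatement (it is the natural sufficient condition here, but finer results in the literature allow slightly larger $s$ depending on $r$). Since the paper itself delegates the proof to \cite{Duong2015}, citing that reference, as the authors do, is the appropriate resolution.
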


\begin{proposition}[A fractional Sobolev embedding, see Corollary A.2 in \cite{Dao2019}] \label{Embedding}
Let  $1 < q < \infty$ and $0< s_1< n/q < s_2$. Then, it holds
$$ \|u\|_{L^\ity} \lesssim \|u\|_{\dot{H}^{s_1}_q}+ \|u\|_{\dot{H}^{s_2}_q}. $$
\end{proposition}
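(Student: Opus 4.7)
The plan is to carry out a standard homogeneous Littlewood-Paley dyadic decomposition of $u$, apply Bernstein's inequality on each frequency block to pass from $L^q$ to $L^\infty$, and then sum the resulting geometric series in the two frequency regimes, exploiting $s_1<n/q$ on low frequencies and $s_2>n/q$ on high frequencies.

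First I would fix a Littlewood-Paley partition $\{\Delta_j\}_{j\in\mathbb{Z}}$ so that each $\Delta_j u$ has Fourier support in an annulus $\{|\xi|\sim 2^j\}$ and $u=\sum_j \Delta_j u$. Bernstein's inequality then yields
$$
\|\Delta_j u\|_{L^\infty}\lesssim 2^{jn/q}\|\Delta_j u\|_{L^q},
$$
so that $\|u\|_{L^\infty}\leq\sum_{j\in\mathbb{Z}} 2^{jn/q}\|\Delta_j u\|_{L^q}$. From the pointwise inequality $|2^{js_i}\Delta_j u(x)|\leq\bigl(\sum_k|2^{ks_i}\Delta_k u(x)|^2\bigr)^{1/2}$ combined with the Littlewood-Paley characterization $\|u\|_{\dot{H}^{s_i}_q}\approx\bigl\|\bigl(\sum_k|2^{ks_i}\Delta_k u|^2\bigr)^{1/2}\bigr\|_{L^q}$, one obtains the uniform bound $2^{js_i}\|\Delta_j u\|_{L^q}\lesssim\|u\|_{\dot{H}^{s_i}_q}$ for every $j\in\mathbb{Z}$ and $i=1,2$.

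Next I would split the sum at $j=0$. For $j\leq 0$ I would factor the summand as $2^{j(n/q-s_1)}\cdot(2^{js_1}\|\Delta_j u\|_{L^q})$; because $n/q-s_1>0$, the geometric tail $\sum_{j\leq 0}2^{j(n/q-s_1)}$ is finite, producing a bound of $\lesssim\|u\|_{\dot{H}^{s_1}_q}$. For $j>0$ I would perform the analogous factorization with $s_2$ in place of $s_1$; since $n/q-s_2<0$, the tail $\sum_{j>0}2^{j(n/q-s_2)}$ is also finite, giving a contribution bounded by $\|u\|_{\dot{H}^{s_2}_q}$. Adding the two pieces yields the desired inequality.

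The main obstacle I anticipate is the customary technicality of homogeneous Littlewood-Paley calculus, namely that $u=\sum_j\Delta_j u$ holds only modulo polynomials in the tempered-distribution sense. Under the hypothesis $u\in\dot{H}^{s_1}_q\cap\dot{H}^{s_2}_q$ with $s_1<n/q<s_2$, however, this polynomial ambiguity is forced to be trivial upon fixing the canonical realization, so the Littlewood-Paley reconstruction can be invoked without further care and the argument proceeds as sketched.
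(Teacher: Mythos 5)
Your argument is correct: the dyadic decomposition, the Bernstein estimate $\|\Delta_j u\|_{L^\infty}\lesssim 2^{jn/q}\|\Delta_j u\|_{L^q}$, the uniform bounds $2^{js_i}\|\Delta_j u\|_{L^q}\lesssim\|u\|_{\dot{H}^{s_i}_q}$, and the split of the sum at $j=0$ using $s_1<n/q<s_2$ to make both geometric tails converge constitute the standard proof of this embedding, and you correctly flag and dispose of the modulo-polynomials issue. The paper does not prove this proposition itself but quotes it from Corollary A.2 of the cited reference, where it is established by essentially the same Littlewood--Paley/Bernstein route (equivalently, via $\dot{H}^{s_1}_q\cap\dot{H}^{s_2}_q\hookrightarrow\dot{B}^{n/q}_{q,1}\hookrightarrow L^\infty$), so your proposal matches the intended argument.
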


\begin{proposition}[Fractional chain rule, see Theorem 1.5 in \cite{Palmieri2018}]\label{chainrule}
    Let $s >0,\, p > \lceil s \rceil$ and $1 < r, r_1, r_2 < \infty$ satisfying the relation $$\frac{1}{r} = \frac{p-1}{r_1} + \frac{1}{r_2}.$$ Let us denote by $F(u)$ one of the functions $|u|^p, \pm |u|^{p-1}u$. Then, it holds
    \begin{align*}
        \| F(u)\|_{\dot{H}_r^s} \lesssim \|u\|_{L^{r_1}}^{p-1} \| u\|_{\dot{H}^s_{r_2}}.
    \end{align*}
\end{proposition}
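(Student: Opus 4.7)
To prove this fractional chain rule, I would adopt the standard approach based on Littlewood--Paley theory combined with Bony's paraproduct calculus, following the strategy of Christ--Weinstein and its refinement by Palmieri.

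The plan is to first exploit the Littlewood--Paley characterization of the homogeneous Sobolev norm,
$$\|F(u)\|_{\dot{H}^s_r} \sim \Bigl\|\Bigl(\sum_{j \in \mathbb{Z}} 2^{2js}|\Delta_j F(u)|^2\Bigr)^{1/2}\Bigr\|_{L^r},$$
reducing the task to estimating the dyadic blocks $\Delta_j F(u)$. Using the telescoping identity $F(u) = \sum_j\bigl(F(S_{j+1}u) - F(S_j u)\bigr)$, with $S_j$ the low-frequency truncation associated to the Littlewood--Paley decomposition, the fundamental theorem of calculus yields
$$F(S_{j+1}u) - F(S_j u) = \Bigl(\int_0^1 F'(S_j u + \tau \Delta_j u)\,d\tau\Bigr)\,\Delta_j u.$$
Since $|F'(w)| \lesssim |w|^{p-1}$, and since $|S_j u|$ and $|S_j u + \tau \Delta_j u|$ are pointwise dominated by the Hardy--Littlewood maximal function $Mu$, one obtains, up to harmless contributions coming from the Littlewood--Paley projectors, the pointwise bound
$$|\Delta_j F(u)(x)| \lesssim (Mu(x))^{p-1}\,|\Delta_j u(x)|.$$

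Next, I would insert this bound into the square function and apply H\"older's inequality in $L^r$ with the prescribed splitting $\frac{1}{r} = \frac{p-1}{r_1} + \frac{1}{r_2}$:
$$\|F(u)\|_{\dot{H}^s_r} \lesssim \bigl\|(Mu)^{p-1}\bigr\|_{L^{r_1/(p-1)}}\,\Bigl\|\Bigl(\sum_j 2^{2js}|\Delta_j u|^2\Bigr)^{1/2}\Bigr\|_{L^{r_2}} \lesssim \|Mu\|_{L^{r_1}}^{p-1}\,\|u\|_{\dot{H}^s_{r_2}}.$$
The $L^{r_1}$-boundedness of $M$, which is available precisely because $r_1 > 1$, together with the Littlewood--Paley characterization applied in reverse to the second factor, close the estimate.

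The main obstacle is the assumption $p > \lceil s \rceil$, which is strictly weaker than the naive condition $p > 1 + s$ needed for a straightforward pointwise chain rule when $s$ is non-integer. For $s \in (k, k+1)$ with integer $k \geq 1$ and $p$ only slightly exceeding $k+1$, one must combine the paraproduct decomposition above with a careful iterated application of the ordinary chain rule on the smooth piece $S_j u$, where classical derivatives up to order $\lceil s \rceil$ are available, while controlling the commutator and paraproduct remainders on the rough piece through vector-valued Calder\'on--Zygmund bounds and Fefferman--Stein vector-valued maximal inequalities to transfer the square functions across the two components. This delicate bookkeeping is precisely where Palmieri's refinement improves upon the classical Christ--Weinstein inequality and would constitute the technical bulk of the argument.
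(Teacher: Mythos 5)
The paper does not prove this proposition at all: it is quoted verbatim as a known tool, with a citation to Theorem 1.5 of Palmieri--Reissig, so there is no in-paper argument to compare yours against. Your sketch does follow the standard route by which such fractional chain rules are actually established in the literature (Littlewood--Paley characterization, telescoping over frequency truncations, maximal-function domination, H\"older with the splitting $\tfrac{1}{r}=\tfrac{p-1}{r_1}+\tfrac{1}{r_2}$), so the architecture is right.

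There is, however, a genuine gap at the central step. The telescoping identity together with the fundamental theorem of calculus gives a pointwise bound on the \emph{telescoping pieces} $f_j := F(S_{j+1}u)-F(S_j u)$, namely $|f_j| \lesssim (Mu)^{p-1}\,|\Delta_j u|$ (after dominating $\Delta_j u$ by $M(\Delta_j u)$ or similar). It does \emph{not} give the bound you assert on $\Delta_j F(u)$ itself: since $F$ is nonlinear, $f_k$ is not frequency-localized near $2^k$, so $\Delta_j F(u) = \sum_{k\in\mathbb{Z}} \Delta_j f_k$ contains off-diagonal contributions for every $k$, and the inequality $|\Delta_j F(u)(x)|\lesssim (Mu(x))^{p-1}|\Delta_j u(x)|$ is false as stated. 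The actual proof must estimate $\sum_k \Delta_j f_k$ by splitting into $k>j$ (where one sums a geometric series in $2^{(j-k)s}$, using $s>0$) and $k\le j$ (where one must exploit $\lceil s\rceil$ derivatives of $f_k$, each costing a factor $2^k$, to gain $2^{-(j-k)(\lceil s\rceil - s)}$; this is exactly where the hypothesis $p>\lceil s\rceil$ enters, to guarantee $F\in\mathcal{C}^{\lceil s\rceil}$ with H\"older-type bounds on $F^{(\lceil s\rceil)}$). Your closing paragraph gestures at these ingredients but attributes the difficulty only to the case $s\ge 1$, whereas the off-diagonal summation is the heart of the argument even for $0<s<1$; labelling it ``harmless contributions coming from the Littlewood--Paley projectors'' conceals the step on which the whole estimate rests. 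The H\"older step and the $L^{r_1}$-boundedness of $M$ (using $r_1>1$) are fine once the square-function bound is legitimately in hand.
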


%==================================================================

\section*{Acknowledgments}
This research of Dinh Van Duong and Tuan Anh Dao is done during his period at Vietnam Institute for Advanced Study in Mathematics (VIASM) from June 2025 to August 2025. He would like to express a sincere thankfulness to VIASM for their hospitality and very kind support as well.

%=================================================================================={References}


\begin{thebibliography}{00}
 \bibliographystyle{plain}
 \bibitem{Brezis2011} H. Brezis, \textit{Functional Analysis, Sobolev Spaces and Partial Differential Equations}, Universitext. Springer, New York, 2011.
 
 \bibitem{ChenDao2023} W. Chen, T.A. Dao, Sharp lifespan estimates for the weakly coupled system of
semilinear damped wave equations in the critical case, \textit{Math. Ann. } \textbf{385}(2023) 101-130.

\bibitem{ChenReissig2023} W. Chen, M. Reissig, On the critical exponent and sharp lifespan estimates for semilinear damped wave equations with data from Sobolev spaces of negative order , \textit{J. Evol. Equa. } (2023) 23:13.

\bibitem{DabbiccoEbert2017} M. D'Abbicco, M.R. Ebert, A new phenomenon in the critical exponent for structurally damped semi-linear evolution equations, \textit{Nonlinear Anal.}, \textbf{149} (2017), 1-40.

\bibitem{Dao2019} T.A.  Dao, M. Reissig, An application of $L^1$ estimates for oscillating integrals to parabolic like semi-linear structurally damped $\sigma$-evolution models, \textit{J. Math. Anal. Appl. } \textbf{15}(2019), 426-463.

\bibitem{Duong2015} P.T. Duong, M.K. Mezadek, M. Reissig, Global existence for semi-linear structurally damped $\sigma$-evolution
models, \textit{J. Math. Anal. Appl. } \textbf{431} (2015) 569–596.

\bibitem{DuongDao2025} D.V. Duong, T.A. Dao, On the critical exponent for semi-linear damped wave equations with power nonlinearity and initial data belonging to Sobolev spaces of negative order, preprint.

 \bibitem{Ebert2020} M.R. Ebert, G. Girardi, M. Reissig, Critical regularity of nonlinearities in semilinear classical damped
wave equations, \textit{ Math. Ann.} \textbf{378} (2020) 1311–1326.

\bibitem{Hajaiej2011} H. Hajaiej, L. Molinet, T. Ozawa, B. Wang, Necessary and sufficient conditions for the fractional Gagliardo-Nirenberg
inequalities and applications to Navier-Stokes and generalized boson equations, in: Harmonic Analysis and Nonlinear
Partial Differential Equations, in: RIMS Kokyuroku Bessatsu, vol. B26, Res. Inst. Math. Sci. (RIMS), Kyoto, 2011,
pp. 159–175.

\bibitem{Ikeda2019} M. Ikeda, T. Inui, M. Okamoto, Y. Wakasugi, $L^p - L^q$ estimates for the damped wave equation and the critical exponent for the nonlinear problem with slowly decaying data, \textit{Commun. Pure Appl. Anal.} \textbf{18}(2019) 1967-2008.

 \bibitem{IkedaOgawa2016} M. Ikeda, T. Ogawa, Lifespan of solutions to the damped wave equation with a critical nonlinearity, \textit{J. Differential Equations} \textbf{261} (2016) 1880–1903.

\bibitem{IkehataMiyaokaNakatake2004} R. Ikehata, Y. Miyaoka, T. Nakatake, Decay estimates of solutions for dissipative wave equationsin $\mathbb{R}^n$ with lower power nonlinearities, \textit{J. Math. Soc. Japan} \textbf{56}(2004) 365–373. 


\bibitem{IkehataOhta2002} R. Ikehata, M. Ohta, Critical exponents for semilinear dissipative wave equations in $\mathbb{R}^N$, \textit{J. Math. Anal. Appl.} \textbf{269} (2002)  87–97.

\bibitem{IkehataTanizawa2005} R. Ikehata, K. Tanizawa, Global existence of solutions for semilinear damped
wave equations in $\mathbb{R}^n$ with noncompactly supported
initial data, \textit{Nonlinear Anal.} \textbf{61}(2005) 1189-1208.

\bibitem{LiZhou1995} T.T. Li, Y. Zhou,  Breakdown of solutions to $\Box u + u_t = |u|^{1+\alpha}$, \textit{Discrete
Contin. Dyn. Syst. } \textbf{1}(1995) 503-520. 

\bibitem{LaiZhou2019} N.A. Lai, Y. Zhou, The sharp lifespan estimate for semilinear damped wave equation with Fujita critical power in higher dimensions, \textit{J. Math. Pures Appl.} \textbf{123}(2019) 229-243.


\bibitem{Matsumura1976} A. Matsumura, On the asymptotic behavior of solutions of semi-linear wave equations, \textit{Publ. Res. Inst. Math. Sci.} \textbf{12} (1976) 169–189. 

 \bibitem{MarcatiNishihara2003} P. Marcati, K. Nishihara, The $L^p-L^q$ estimates of solutions to one-dimensional damped wave equations and their application to the compressible flow
through porous media, \textit{J. Differential Equations} \textbf{191} (2003) 445–469.


\bibitem{Michihisa2021} H. Michihisa, $L^2$ asymptotic profiles of solutions to linear damped wave equations, \textit{J. Differential Equations}, \textbf{296} (2021) 573-592.

\bibitem{MitidieriPohozaev2001} E. Mitidieri, S.I. Pohozaev, A priori estimates and the absence of solutions of nonlinear partial differential equations and inequalities, \textit{Tr. Mat. Inst.
Steklova} \textbf{234} (2001) 1–384. 

\bibitem{Narazaki2004} T. Narazaki, $L^p-L^q$ estimates for damped wave equations and their applications to semilinear problem, \textit{J. Math. Soc. Japan. } \textbf{56}(2004),  586–626.

\bibitem{MitidieriPohozaev2009} E. Mitidieri, S.I. Pohozaev, Lifespan estimates for solutions of some evolution inequalities, \textit{Differ. Uravn.} \textbf{45} (2009) 1441–1451.


\bibitem{Nishihara2003} K. Nishihara, $L^p-L^q$ estimates of solutions to the damped wave
equation in $3$-dimensional space and their application, \textit{Math. Z.} \textbf{244} 
 (2003) 631-649.

\bibitem{NishiharaWakasugi2014} K. Nishihara, Y. Wakasugi, Critical exponent for the Cauchy problem to the weakly coupled damped wave system, \textit{Nonlinear Anal.} \textbf{108} (2014) 249-259.

 \bibitem{NishiharaWakasugi2015} K. Nishihara, Y. Wakasugi, Global existence of solutions for a weakly coupled system of semilinear damped wave equations, \textit{J. Differential Equations} \textbf{259}(2015) 4172-4201.

\bibitem{Palmieri2018} A. Palmieri, M. Reissig, Semi-linear wave models with power non-linearity and scale-invariant time-dependent mass and
dissipation, II, \textit{Math. Nachr. } \textbf{291} (2018) 1859–1892.

\bibitem{SunWang2007} F. Sun, M. Wang, Existence and nonexistence of global solutions for a nonlinear hyperbolic system
with damping, \textit{Nonlinear Anal. } \textbf{66} (2007)  2889–2910.

\bibitem{Takeda2009} H. Takeda, Global existence and nonexistence of solutions for a system of nonlinear
damped wave equations, \textit{J. Math. Anal. Appl.} \textbf{360} (2009) 631-650.


\bibitem{Takeda2015} H. Takeda, Higher-order expansion of solutions for a damped wave equation, \textit{Asymptotic Anal.}, \textbf{94} (2015) 1-31.

\bibitem{TodorovaYordanov2001} G. Todorova, B. Yordanov, Critical Exponent for a Nonlinear Wave Equation with Damping, \textit{J. Differential Equations} \textbf{174} (2001) 464-489.

\bibitem{Zhang2001} Q.S. Zhang, A blow-up result for a nonlinear wave equation with damping: the critical case, \textit{C. R.
Acad. Sci. Paris Sér. I Math. } \textbf{333}(2001),  109–114.
\end{thebibliography}
\end{document}